\documentclass{amsart}
\usepackage{amssymb, amsmath, mathrsfs, verbatim, url, bbm, marvosym}

\theoremstyle{plain}
\newtheorem{theorem}{Theorem}[section]
\newtheorem{lemma}[theorem]{Lemma}
\newtheorem{proposition}[theorem]{Proposition}
\newtheorem{corollary}[theorem]{Corollary}

\theoremstyle{definition}
\newtheorem{definition}[theorem]{Definition}
\newtheorem{question}[theorem]{Question}

\newcommand{\Gd}{\mathsf{G_\delta}}
\newcommand{\Fs}{\mathsf{F_\sigma}}
\newcommand{\bG}{\mathbf{\Gamma}}
\newcommand{\an}{\mathbf{\Sigma}_1^1}
\newcommand{\coan}{\mathbf{\Pi}_1^1}
\newcommand{\del}{\mathbf{\Delta}_2^1}

\newcommand{\maxi}{\mathsf{max}}

\newcommand{\id}{\mathsf{id}}
\newcommand{\cl}{\mathsf{cl}}

\newcommand{\tr}{\mathsf{tr}}
\newcommand{\CH}{\mathsf{CH}}
\newcommand{\MA}{\mathsf{MA}}
\newcommand{\MAsigma}{\mathsf{MA}(\sigma\text{-}\mathrm{centered})}
\newcommand{\MP}{\mathsf{MP}}
\newcommand{\CB}{\mathsf{CB}}
\newcommand{\CBP}{\mathsf{CBP}}
\newcommand{\Ell}{\mathsf{L}}

\newcommand{\VL}{\mathsf{V=L}}
\newcommand{\ZFC}{\mathsf{ZFC}}
\newcommand{\ZFm}{\mathsf{ZF-P}}
\newcommand{\OC}{\mathsf{OC}}
\newcommand{\Aa}{\mathcal{A}}
\newcommand{\BB}{\mathcal{B}}
\newcommand{\CC}{\mathcal{C}}

\newcommand{\FF}{\mathcal{F}}

\newcommand{\LL}{\mathcal{L}}

\newcommand{\PPP}{\mathbb{P}}

\newcommand{\QQQ}{\mathbb{Q}}
\newcommand{\cccc}{\mathfrak{c}}

\newcommand{\dddd}{\mathfrak{d}}

\begin{document}

\title{Between Polish and completely Baire}

\author{Andrea Medini}
\address{Kurt G\"odel Research Center for Mathematical Logic
\newline\indent University of Vienna, W\"ahringer Stra{\ss}e 25, A-1090 Wien,
Austria}
\email{andrea.medini@univie.ac.at}
\urladdr{http://www.logic.univie.ac.at/\~{}medinia2/}

\author{Lyubomyr Zdomskyy}
\address{Kurt G\"odel Research Center for Mathematical Logic
\newline\indent University of Vienna, W\"ahringer Stra{\ss}e 25, A-1090 Wien,
Austria}
\email{lyubomyr.zdomskyy@univie.ac.at}
\urladdr{http://www.logic.univie.ac.at/\~{}lzdomsky/}

\thanks{The authors acknowledge the support of the FWF grant I 1209-N25.
\newline\indent The second author also thanks the Austrian Academy of Sciences
for its generous support
\newline\indent through the APART Program.}

\date{May 30, 2014}

\begin{abstract}
All spaces are assumed to be separable and metrizable. Consider the following
properties of a space $X$.
\begin{enumerate}
\item\label{p} $X$ is Polish.
\item\label{mp} For every countable crowded $Q\subseteq X$ there exists a
crowded $Q'\subseteq Q$ with compact closure.
\item\label{cbp} Every closed subspace of $X$ is either scattered or it contains
a homeomorphic copy of $2^\omega$.
\item\label{cb} Every closed subspace of $X$ is a Baire space.
\end{enumerate}
While (\ref{cb}) is the well-known property of being \emph{completely Baire},
properties (\ref{mp}) and (\ref{cbp}) have
been recently introduced by Kunen, Medini and Zdomskyy, who named them the
\emph{Miller property} and the \emph{Cantor-Bendixson property} respectively. It
turns out that the implications $(\ref{polish})\rightarrow (\ref{mp})\rightarrow
(\ref{cbp})\rightarrow (\ref{cb})$
hold for every space $X$. Furthermore, it follows from a classical result of
Hurewicz that all these implications are equivalences if $X$ is coanalytic.
Under the axiom of Projective Determinacy,
this equivalence result extends to all projective spaces. We will complete the
picture by giving a $\ZFC$ counterexample and a consistent definable
counterexample of lowest possible complexity to the implication $(i)\leftarrow
(i+1)$ for $i=1,2,3$. For one of these counterexamples we will need a classical
theorem of Martin and Solovay, of which we give a new proof, based on a result
of Baldwin and Beaudoin. Finally, using a method of Fischer and Friedman, we
will investigate how changing the value of the continuum affects the
definability of these counterexamples. Along the way, we will show that every
uncountable completely Baire space has size continuum.
\end{abstract}

\maketitle

\section{Introduction}

All spaces are assumed to be separable and metrizable. Recall that a space is
\emph{crowded} if it is non-empty and it has no isolated points. Recall that a
space is \emph{scattered} if it has no crowded subspaces. We will write
$X\approx Y$ to mean that the spaces $X$ and $Y$ are homeomorphic. For all
undefined topological notions, see \cite{vanmill}. The aim of this paper is
to investigate the following topological properties.
\begin{definition}[Kunen, Medini, Zdomskyy]\label{mpdef}
A space $X$ has the \emph{Miller property} (briefly, the $\MP$) if for every
countable crowded $Q\subseteq X$ there exists a crowded $Q'\subseteq Q$ with
compact closure.
\end{definition}
\begin{definition}[Kunen, Medini, Zdomskyy]\label{cbpdef}
A space $X$ has the \emph{Cantor-Bendixson property} (briefly, the $\CBP$) if
every closed subspace of $X$ is either scattered or it contains a homeomorphic
copy of $2^\omega$.
\end{definition}
\begin{definition}\label{cbdef}
A space $X$ is \emph{completely Baire}\footnote{Some authors use `hereditarily
Baire' or even `hereditary Baire' instead of `completely Baire'.} (briefly,
$\CB$) if every closed subspace
of $X$ is a Baire space.
\end{definition}

While $\CB$ spaces are well-known (see for example \cite{debs},
\cite{marciszewskib} or \cite{marciszewskif}), the $\MP$ and the $\CBP$ have
only recently
been introduced in \cite{kmz}, inspired respectively by a remark from
\cite{millerf} and by
the classical Cantor-Bendixson derivative. In particular, the $\MP$ turned out
to be very useful in the study of the countable dense
homogeneity of filters on $\omega$ (viewed as subspaces of $2^\omega$ through
characteristic functions).

As the title suggests, the $\MP$ and the $\CBP$ are
intermediate in strength between the property of being Polish and the property
of being $\CB$ (this is the content of Section 2). However, in Section 3, we
will construct $\ZFC$ counterexamples to the reverse
implications.

It follows from a result of Marciszewski in
\cite{marciszewskif} that under
combinatorial assumptions on $X$ (namely, when $X=\FF$ is a filter on $\omega$)
the three properties defined above become
equivalent (see Theorem 10 in \cite{kmz}). In Section 5, using a classical
result of Hurewicz
and Corollary \ref{ancbmp}, we will show that these properties also become
equivalent under
definability assumptions on $X$. In Section 7, we will prove that our results
are sharp, by constructing consistent definable counterexamples of lowest
possible complexity (see Theorem \ref{maind}). For one of these counterexamples
(namely, Proposition \ref{conslambda}), we will employ a classical result of
Martin and Solovay, of which we will give a new, topological proof in Section 8,
using a result of Baldwin and Beaudoin.
Section 4 contains preliminary material for the remainder of the article, and
Section 6 contains preliminary material for Section 7.

Finally, in Section 9, we will investigate how changing the value of the
continuum affects the definability of these counterexamples, using a method of
Fischer and Friedman. As a byproduct of this investigation, we will show that
every $\CB$ space is either countable or
has size $\cccc$ (see Theorem \ref{chforcb}). This dichotomy, which
is well-known for Polish spaces, seems to be of independent interest.

\section{Arbitrary spaces}

The following theorem gives a complete picture of the relationships among the
properties that we are interested in, if one disregards the issue of
definability.
\begin{theorem}\label{mainzfc}
Consider the following conditions on a space $X$.
\begin{enumerate}
\item\label{polish} $X$ is Polish
\item\label{millerproperty} $X$ has the $\MP$.
\item\label{cantorbendixson} $X$ has the $\CBP$.
\item\label{completelybaire} $X$ is $\CB$.
\end{enumerate}
The implications $(\ref{polish})\rightarrow (\ref{millerproperty})\rightarrow
(\ref{cantorbendixson})\rightarrow (\ref{completelybaire})$ hold for every space
$X$. There exists a $\ZFC$ counterexample to the implication $(i)\leftarrow
(i+1)$ for $i=1,2,3$.
\end{theorem}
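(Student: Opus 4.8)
The plan is to prove the three forward implications by hand and to obtain the counterexamples by transfinite recursions of length $\cccc$. For $(\ref{polish})\rightarrow(\ref{millerproperty})$, fix a complete metric on the Polish space $X$ and a countable crowded $Q$. I would build a scheme of open sets $\{U_s:s\in 2^{<\omega}\}$ whose centers $q_s$ are chosen in $Q$, with $\overline{U_{s^\frown 0}},\overline{U_{s^\frown 1}}$ contained in $U_s$ and $\mathrm{diam}(U_s)\le 2^{-|s|}$, arranging that $q_s\in\bigcap_k U_{s^\frown 0^k}$ so that $q_s=\lim_k q_{s^\frown 0^k}$; then the set $Q'\subseteq Q$ of all centers is crowded. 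Since at level $n$ all but finitely many centers lie in the $\le 2^n$ pieces $U_s$ ($|s|=n$) of diameter $\le 2^{-n}$, the set $Q'$ is totally bounded, and as $\overline{Q'}$ is closed in the complete space $X$ it is also complete, hence compact. For $(\ref{millerproperty})\rightarrow(\ref{cantorbendixson})$, let $F\subseteq X$ be closed and not scattered; then $F$ has a crowded subspace, any countable dense subset $Q$ of which is a countable crowded subset of $X$. Applying the $\MP$ produces a crowded $Q'\subseteq Q$ with compact closure $K$; since $F$ is closed, $K\subseteq F$, and $K$ is a nonempty compact crowded (hence perfect) metrizable space, so it contains a copy of $2^\omega$.

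For $(\ref{cantorbendixson})\rightarrow(\ref{completelybaire})$ I would first note that every scattered space is Baire: its isolated points are dense and no nowhere dense set contains one, so no nonempty open set is meager. The essential input is the classical characterization, which follows from Hurewicz's theorem, that a separable metrizable space is $\CB$ if and only if it contains no closed copy of $\QQQ$. Granting this, a space $X$ with the $\CBP$ cannot contain a closed copy $C$ of $\QQQ$, for $C$ would be closed in $X$, crowded (hence not scattered) and countable (hence without a copy of $2^\omega$), contradicting the $\CBP$; therefore $X$ is $\CB$.

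The counterexamples are necessarily non-coanalytic, since by the same Hurewicz theorem all four properties coincide on coanalytic spaces, so each is built by a recursion of length $\cccc$. For $i=3$ a Bernstein set $B\subseteq 2^\omega$ works: $B$ is crowded and contains no perfect set, so $B$ is a closed-in-itself non-scattered subspace with no copy of $2^\omega$, and thus fails the $\CBP$; yet if $C\subseteq B$ were a closed copy of $\QQQ$, then $P=\cl(C)$ would be perfect with $B\cap P=C$ countable, which is impossible for a Bernstein set, so $B$ has no closed copy of $\QQQ$ and hence is $\CB$. For $i=1$ I would enumerate the countable crowded subsets of $2^\omega$ and, for each one kept inside $X$, throw in a Cantor set in which it is dense (securing the $\MP$), while diagonalizing so that $X$ is not $G_\delta$ and hence not Polish. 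For $i=2$ I would fix a countable $Q^\ast$ dense in a Cantor set $P^\ast$ and arrange that the complement of $X$ meets every perfect set in which $Q^\ast$ is dense, so that no crowded subset of $Q^\ast$ has compact closure and the $\MP$ fails, while keeping a nowhere dense Cantor subset of $X$ inside every perfect set so that the $\CBP$ survives.

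The hard part will be the bookkeeping in these last two recursions. The capturing and diagonalizing requirements commit $\cccc$-many points either to $X$ or to its complement, and one must check at each stage $\alpha<\cccc$ that the points committed so far form a set of size $<\cccc$ in sufficiently general position, so that the next requirement can be met without destroying a Cantor set reserved for the $\CBP$ or a point excluded to defeat the $\MP$. The genuine tension is that "$X$ should contain a perfect set inside every closed non-scattered subspace" and "the complement of $X$ should meet the relevant perfect sets" pull in opposite directions; the key observation that makes them compatible is that the perfect sets one must meet (closures of crowded subsets of the fixed countable $Q^\ast$) are \emph{fat}, whereas the Cantor sets one must preserve can always be taken nowhere dense, so the two families can be kept apart throughout the construction.
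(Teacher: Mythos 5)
Your three forward implications are correct. The shrinking-scheme argument for $(\ref{polish})\rightarrow(\ref{millerproperty})$ is a sound, more self-contained alternative to the paper's route (which factors through an open continuous surjection $\omega^\omega\longrightarrow X$ and the $\MP$ of $\omega^\omega$); the step $(\ref{millerproperty})\rightarrow(\ref{cantorbendixson})$ is the same easy argument the paper leaves to the reader; and $(\ref{cantorbendixson})\rightarrow(\ref{completelybaire})$ via the Hurewicz characterization of $\CB$ by closed copies of $\QQQ$ is exactly the paper's proof. The Bernstein set for $i=3$ is also the paper's counterexample, and your verification (via $|B\cap P|=\cccc$ for every perfect $P$) is fine.

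The genuine gaps are in the recursions for $i=1$ and $i=2$. For $i=1$: ``throw in a Cantor set in which $Q$ is dense'' can only mean adding $\cl(Q)$, since that is the unique Cantor set in which $Q$ is dense; but some countable crowded $Q\subseteq X$ will be dense in $2^\omega$, and then you have added all of $2^\omega$. You must instead add $\cl(Q')$ for a well-chosen crowded $Q'\subsetneq Q$ (e.g.\ one of $\cccc$ many crowded subsets of $Q$ whose closures pairwise meet only inside $Q$), and you must confine the points excluded to defeat $\Gd$-ness to the first $\omega_1$ stages, where the union of reserved closures is still meager; none of this is in your sketch. (The paper sidesteps the whole issue by taking $X=2^\omega\setminus Y$ for an uncountable $\lambda'$-set $Y$, whose $\ZFC$ existence is a quoted classical theorem.) The gap for $i=2$ is worse: your key observation --- the perfect sets to be met are fat, the reserved Cantor sets are nowhere dense --- does not carry the recursion through in $\ZFC$. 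At a stage $\alpha$ with $|\alpha|\geq\mathrm{cov}(\mathcal{M})$, and $\mathrm{cov}(\mathcal{M})<\cccc$ is consistent, the $|\alpha|$ nowhere dense Cantor sets already reserved for the $\CBP$ may cover the very perfect set $P$ (with $P\cap Q^\ast$ dense in $P$) in which you must now place a point of the complement, and nothing in your construction prevents this; the argument works under $\CH$ or $\mathrm{cov}(\mathcal{M})=\cccc$, but the theorem asserts a $\ZFC$ example. This is precisely why the paper does not run a naive recursion here: it invokes a nontrivial theorem of Brendle producing a set $Y\subseteq\omega^\omega$ that contains a Cantor subset of every Cantor set yet contains no closed copy of $\omega^\omega$ lying inside a fixed one, and sets $X=Y\cup D$. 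You would need either to reproduce such a construction or to cite it; the tension you correctly identified is not resolved by nowhere-density alone.
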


\begin{proof}
The implication $(\ref{polish})\rightarrow (\ref{millerproperty})$ is the
content of Proposition \ref{polishmp}. The implication
$(\ref{millerproperty})\rightarrow
(\ref{cantorbendixson})$ is straightforward. In order to prove the implication
$(\ref{cantorbendixson})\rightarrow (\ref{completelybaire})$,
assume that the space $X$ is not $\CB$. By Corollary 1.9.13 in \cite{vanmill},
it follows that $X$ contains a closed
subspace $Q$ homeomorphic to the rationals $\QQQ$. It is clear that $Q$
witnesses that $X$ does not have the $\CBP$.
The counterexamples are given by Proposition \ref{zfclambda}, Proposition
\ref{zfcbrendle} and Proposition \ref{zfcbernstein}.
\end{proof}

\newpage

\begin{lemma}\label{bairespacemp}
The space $\omega^\omega$ has the $\MP$.
\end{lemma}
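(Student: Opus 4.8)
The plan is to exploit the fact that, in $\omega^\omega$ with its usual complete metric, a subset has compact closure if and only if it is totally bounded; so it will suffice to produce a crowded $Q'\subseteq Q$ that sits inside a totally bounded Cantor scheme. The one structural fact about $Q$ that I will use repeatedly is that, since $Q$ is crowded, every point of $Q$ is an accumulation point of $Q$; consequently $[s]\cap Q$ is infinite whenever it is nonempty, where $[s]=\{x\in\omega^\omega:s\subseteq x\}$ denotes the basic clopen set determined by $s\in\omega^{<\omega}$.

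First I would build, by recursion on the length of $\sigma\in 2^{<\omega}$, a Cantor scheme of basic clopen sets $[t_\sigma]$ together with \emph{reference points} $r_\sigma\in[t_\sigma]\cap Q$, maintaining: (i) $t_\sigma\subseteq t_{\sigma^\frown i}$ and $|t_\sigma|\ge|\sigma|$, so that the diameters tend to $0$; (ii) $t_{\sigma^\frown 0}$ and $t_{\sigma^\frown 1}$ are incompatible; and (iii) the scheme is aligned to the left along a fixed reference point, in the sense that $r_{\sigma^\frown 0}=r_\sigma$ and $t_{\sigma^\frown 0}$ is a proper initial segment of $r_\sigma$. At a node $\sigma$ this is arranged by taking $t_{\sigma^\frown 0}$ to be a sufficiently long initial segment of $r_\sigma$ and, using that $[t_\sigma]\cap Q$ is infinite, choosing a new point $r_{\sigma^\frown 1}\in[t_\sigma]\cap Q$ distinct from $r_\sigma$ and from all previously chosen points; if $l\ge|t_\sigma|$ is the first coordinate on which $r_\sigma$ and $r_{\sigma^\frown 1}$ differ, I let the two children be the initial segments of $r_\sigma$ and $r_{\sigma^\frown 1}$ of length $\max(l+1,|\sigma|+1)$, which are incompatible as required.

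I would then let $Q'$ be the set of points actually introduced, namely $Q'=\{r_\emptyset\}\cup\{r_{\tau^\frown 1}:\tau\in 2^{<\omega}\}\subseteq Q$. Compactness of the closure is immediate from the scheme: for any $n$, every point of $Q'$ introduced at a node of length at least $n$ belongs to one of the finitely many level-$n$ boxes $[t_\sigma]$ with $|\sigma|=n$, each of diameter at most $2^{-n}$, while only finitely many points occur at smaller depth; hence $Q'$ is totally bounded and $\overline{Q'}$ is compact. Crowdedness is where the left alignment pays off: fixing a point $r=r_\rho\in Q'$, the boxes $[t_{\rho^\frown 0^k}]$ are initial segments of $r$ of length tending to infinity, so they shrink to $r$; since each such box has a $1$-child that introduces a new point $r_{(\rho^\frown 0^k)^\frown 1}$ of $Q'$ inside it, these points are distinct from $r$ and converge to $r$, witnessing that $r$ is not isolated in $Q'$.

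The main obstacle to anticipate is precisely this simultaneous control of the two requirements: naively shrinking boxes around the branch points of a Cantor scheme yields a compact limit set whose branch points need not lie in the countable set $Q$, so the points one is allowed to use may fail to accumulate anywhere and $Q'$ could end up discrete. The left-aligned reference-point device is designed to defeat exactly this, by pinning one genuine point $r_\rho\in Q$ at the foot of each left spine and feeding the new points of $Q'$ into the boxes converging to it. A secondary point worth checking carefully is that total boundedness of the binary scheme already delivers compact closure, so that no separate estimate on the magnitudes of the coordinates of the chosen points is needed.
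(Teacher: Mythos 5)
Your proof is correct. The engine is the same as in the paper: recursively adjoin new points of $Q$ agreeing with previously chosen points on longer and longer initial segments, so that every chosen point becomes a limit of later ones (crowdedness) while all choices stay inside a pre-compact region. Where you genuinely diverge is in how pre-compactness is certified. The paper doubles a finite set $F_0\cup\cdots\cup F_n$ at stage $n$, requiring each new point to agree with its parent up to coordinate $n$, and then confines $Q'$ to the compact box $\{x\in\omega^\omega:x(n)\le g(n)\text{ for all }n\}$ for a function $g$ read off from the construction --- an argument tied to the product structure of $\omega^\omega$. You instead run a binary Cantor scheme of shrinking basic clopen sets with ``left-aligned'' reference points and appeal to total boundedness plus completeness. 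Your version is a bit longer, but it buys real generality: nothing in it uses $\omega^\omega$ beyond complete metrizability (the clopen boxes could be replaced by small open balls), so essentially the same argument proves Proposition \ref{polishmp} directly, bypassing the open continuous surjection $\omega^\omega\longrightarrow X$ invoked there. The delicate point you flag --- that the branch points of a naive Cantor scheme need not lie in the countable set $Q$, leaving $Q'$ possibly discrete --- is exactly the right thing to worry about, and your left-alignment device handles it: the points $r_{(\rho^\frown 0^k)^\frown 1}$ are distinct from $r_\rho$ and lie in initial-segment neighbourhoods of $r_\rho$ whose lengths tend to infinity, so they converge to $r_\rho$.
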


\begin{proof}
Fix a countable crowded subset $Q$ of $\omega^\omega$. We will construct finite
subsets $F_n$ of $Q$ for $n\in\omega$.
Start by choosing any singleton $F_0\subseteq Q$. Now assume that
$F_0,\ldots,F_n$ are given. Given any $x\in F_0\cup\cdots\cup F_n$, using the
fact that $Q$ is crowded,
it is possible to pick $x'\in Q$ such that $x'\neq x$ and
$x'\upharpoonright (n+1)=x\upharpoonright (n+1)$. Then let $F_{n+1}=\{x':x\in
F_0\cup\cdots\cup F_n\}$.
In the end, let $Q'=\bigcup_{n\in\omega}F_n$.

It is easy to check that $Q'\subseteq Q$ is
crowded. Now let $g:\omega\longrightarrow\omega$ be defined by
$g(n)=\maxi\{x(n):x\in
F_n\}$.
Notice that $K=\{x\in\omega^\omega:x(n)\leq g(n)\textrm{ for all
}n\in\omega\}$ is compact.
Furthermore, our construction guarantees that $Q'\subseteq K$. Therefore $Q'$
has compact closure.
\end{proof}

\begin{proposition}\label{polishmp}
Every Polish space $X$ has the $\MP$.
\end{proposition}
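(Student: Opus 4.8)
The plan is to generalize the argument of Lemma \ref{bairespacemp} from $\omega^\omega$ to an arbitrary Polish space, letting the metric structure play the role that the tree structure of $\omega^\omega$ played there. Since $X$ is Polish, I would fix a complete metric $d$ on $X$, which we may assume is bounded by $1$. The key reduction is that in a complete metric space a subset has compact closure if and only if it is totally bounded: the closure of a subset of a complete space is again complete, and a complete, totally bounded metric space is compact. Thus, to show that $X$ has the $\MP$, it suffices, given a countable crowded $Q\subseteq X$, to produce a crowded $Q'\subseteq Q$ that is totally bounded.

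I would construct $Q'$ exactly as in Lemma \ref{bairespacemp}, reading the condition ``$x'\upharpoonright(n+1)=x\upharpoonright(n+1)$'' as ``$d(x,x')<2^{-(n+1)}$''. Build finite sets $F_n\subseteq Q$ by recursion: let $F_0$ be any singleton, and given $F_0,\dots,F_n$, use the fact that $Q$ is crowded to choose, for each $x\in F_0\cup\cdots\cup F_n$, some $x'\in Q$ with $x'\neq x$ and $d(x,x')<2^{-(n+1)}$; then set $F_{n+1}=\{x':x\in F_0\cup\cdots\cup F_n\}$ and $Q'=\bigcup_{n\in\omega}F_n$. Since $|F_{n+1}|\leq|F_0\cup\cdots\cup F_n|$, each $F_n$ is finite. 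That $Q'$ is crowded is immediate: if $x\in F_k$, then at every stage $n+1\geq k+1$ the construction places a point of $Q'$ within $2^{-(n+1)}$ of $x$ and distinct from it, so $x$ is a limit point of $Q'$.

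The main point — the analogue of the boundedness argument in Lemma \ref{bairespacemp}, where compactness of $K$ came from the bound $g$ — is to verify total boundedness, and this telescoping estimate is the only step demanding care. Given $N\in\omega$, I claim the finite set $F_0\cup\cdots\cup F_N$ is a $2^{-N}$-net for $Q'$. Each $y\in Q'$ lies in some $F_m$; tracing back through the ``parent'' chosen at each stage produces points $y=z_0,z_1,\dots,z_j$ of strictly decreasing level with $z_j\in F_0\cup\cdots\cup F_N$ and $d(z_i,z_{i+1})<2^{-\ell_i}$, where the $\ell_i$ are distinct integers greater than $N$. Hence $d(y,z_j)\leq\sum_{i<j}2^{-\ell_i}<\sum_{k>N}2^{-k}=2^{-N}$, which proves the claim. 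Therefore $Q'$ is totally bounded, so by the reduction above $\overline{Q'}$ is compact, as required.
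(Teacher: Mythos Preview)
Your argument is correct. The reduction to total boundedness via a complete metric is exactly right, the recursive construction of the $F_n$ goes through as stated, and the telescoping estimate showing that $F_0\cup\cdots\cup F_N$ is a $2^{-N}$-net for $Q'$ is clean. The verification that $Q'$ is crowded is also fine, since each $x\in F_k$ acquires a new nearby point of $Q'$ at every later stage.

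Your route differs genuinely from the paper's. The paper does not work directly in $X$; instead it invokes the structural fact (Exercise 7.14 in \cite{kechris}) that every non-empty Polish space is the image of $\omega^\omega$ under a continuous open surjection $f$, lifts the countable crowded set $Q$ to a countable crowded $R\subseteq f^{-1}[Q]$ on which $f$ is injective, applies Lemma \ref{bairespacemp} inside $\omega^\omega$ to get a crowded $R'\subseteq R$ with compact closure $K$, and then pushes $f[R']\subseteq f[K]$ back down to $X$. Your approach is more elementary and self-contained: it avoids the open-surjection theorem entirely and shows directly that the proof of Lemma \ref{bairespacemp} was never really about the tree structure of $\omega^\omega$, only about having a complete metric in which one can control scales. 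The paper's approach, by contrast, is more modular---it isolates $\omega^\omega$ as the single hard case and transfers everything else to it---but at the cost of importing a nontrivial representation theorem.
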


\begin{proof}
The statement is vacuously true if $X$ is empty, so assume that $X$ is
non-empty.
By Exercise 7.14 in \cite{kechris}, there exists a continuous map
$f:\omega^\omega\longrightarrow X$ that is open and surjective.
Fix a countable crowded $Q\subseteq X$. It is not hard to construct a countable
crowded $R\subseteq f^{-1}[Q]$ such that
$f\upharpoonright R$ is injective. This implies that $f[R']$ is crowded for
every crowded
$R'\subseteq R$. Since $\omega^\omega$ has the $\MP$ by Lemma
\ref{bairespacemp}, there exists a crowded $R'\subseteq R$ with compact closure
$K$. Let $Q'=f[R']$.
It is clear that $Q'\subseteq f[K]$ is the desired subset of $Q$.
\end{proof}

\section{ZFC counterexamples}

Recall that a space $X$ is a \emph{$\lambda$-set} if every countable subset of
$X$ is $\Gd$.
Recall that a space $X\subseteq 2^\omega$ is a \emph{$\lambda'$-set} if $X\cup
C$ is a
$\lambda$-set for every countable subset $C$ of $2^\omega$. It is well-known
that a
$\lambda'$-set of size $\omega_1$ exists in $\ZFC$ (see Theorem 5.5 in
\cite{millers} and the argument that follows it).

\begin{proposition}\label{zfclambda}
Let $Y\subseteq 2^\omega$ be an uncountable $\lambda'$-set. Then
$X=2^\omega\setminus Y$ has
the $\MP$ but it is not Polish. 
\end{proposition}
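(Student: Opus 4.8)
The plan is to establish the two assertions separately. For the $\MP$, I would use the $\lambda'$-hypothesis to trap the given countable set inside a $\Gd$ subset of $X$, thereby reducing to the Polish case already handled by Proposition \ref{polishmp}; for the failure of Polishness, I would combine the perfect set property with the fact that a $\lambda$-set can contain no copy of $2^\omega$.

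For the $\MP$, fix a countable crowded $Q\subseteq X$; since $X=2^\omega\setminus Y$ we have $Q\cap Y=\emptyset$. The first and decisive step is to produce a $\Gd$ set $G$ in $2^\omega$ with $Q\subseteq G\subseteq X$. This is where the hypothesis enters: because $Y$ is a $\lambda'$-set, the set $Y\cup Q$ is a $\lambda$-set, so the countable subset $Q$ is relatively $\Gd$ in $Y\cup Q$. Writing $Q=(Y\cup Q)\cap G$ with $G$ a $\Gd$ subset of $2^\omega$, and using $Q\cap Y=\emptyset$, one checks that $Q\subseteq G$ and $Y\cap G=\emptyset$, that is, $G\subseteq X$. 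Now $G$, being $\Gd$ in $2^\omega$, is Polish, hence has the $\MP$ by Proposition \ref{polishmp}. Since $Q$ is a countable crowded subset of $G$, the $\MP$ of $G$ yields a crowded $Q'\subseteq Q$ whose closure in $G$ is compact. As compactness is absolute, the closure $\cl(Q')$ computed in $2^\omega$ coincides with the one computed in $G$; therefore $\cl(Q')\subseteq G\subseteq X$ and $\cl(Q')$ is compact, so $Q'$ witnesses the $\MP$ for $Q$.

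For the failure of Polishness, I would argue by contradiction. If $X$ were Polish then, being a subspace of $2^\omega$, it would be $\Gd$, so $Y=2^\omega\setminus X$ would be $\Fs$, say $Y=\bigcup_{n\in\omega}F_n$ with each $F_n$ closed. Since $Y$ is uncountable, some $F_n$ is an uncountable closed subset of $2^\omega$, and by the classical Cantor--Bendixson theorem $F_n$ contains a homeomorphic copy of $2^\omega$; hence $Y$ contains such a copy. But a $\lambda$-set (and $Y$ is one, being a $\lambda'$-set) can contain no copy of $2^\omega$: if $C\subseteq Y$ with $C\approx 2^\omega$ and $D$ is a countable dense subset of $C$, then $D$ is relatively $\Gd$ in $Y$, so $D=C\cap G$ is $\Gd$ in $C$ for a suitable $\Gd$ set $G$; being dense and $\Gd$ in the perfect Polish space $C$, the set $D$ is comeager, while being countable it is meager, contradicting the Baire category theorem. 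This contradiction shows that $X$ is not Polish.

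The only genuinely delicate point is the construction of the $\Gd$ hull $G$ in the first part; everything else is either a routine absoluteness check or a standard application of the perfect set property and Baire category. I expect extracting $G$ from the $\lambda'$-hypothesis, and in particular verifying $Y\cap G=\emptyset$, to be the step that must be gotten exactly right.
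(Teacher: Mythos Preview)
Your proposal is correct and follows essentially the same route as the paper: for the $\MP$ you use the $\lambda'$-hypothesis to produce a $\Gd$ hull $G$ of $Q$ inside $X$ and then invoke Proposition~\ref{polishmp}, and for the failure of Polishness you argue that otherwise $Y$ would be an uncountable $\Fs$ set containing a copy of $2^\omega$. The paper's proof is terser, leaving the verification of $Y\cap G=\varnothing$ and the reason a $\lambda$-set cannot contain a Cantor set implicit, whereas you spell both out; the extra detail is welcome and does not change the underlying argument.
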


\begin{proof}
Notice that $X$ cannot be a $\Gd$ subset of $2^\omega$, otherwise $Y$ would be
an uncountable $\Fs$ subset of $2^\omega$, hence it would contain a copy of
$2^\omega$. In order to show that $X$ has the $\MP$, let $Q\subseteq X$ be
crowded. Since
$Y$ is a $\lambda'$-set, the set $Q$ is a $\Gd$ subset of $Y\cup Q$.
This means that there exists a $\Gd$ subset $G$ of $2^\omega$ such that
$Q\subseteq G\subseteq X$. Therefore, by Proposition \ref{polishmp},
there exists a crowded $Q'\subseteq Q$ such that $Q'$ has compact closure in
$G$, hence in $X$.
\end{proof}

The existence of $Y$ as in the next
proposition is due to Brendle (take the complement of the set of branches
of the tree given by Theorem 2.2 in \cite{brendle}). We will also need the
following lemma, which can be safely assumed to be folklore.

\begin{lemma}\label{closurebaire}
Fix a countable dense subset $D$ of $2^\omega$, and let $Z=2^\omega\setminus D$.
Let $N\subseteq Z$ be a copy of $\omega^\omega$ that is closed in $Z$. Then
$D'=\cl(N)\cap D$ is crowded, where the closure is taken in $2^\omega$.
\end{lemma}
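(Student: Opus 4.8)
The plan is to verify separately that $D'$ is non-empty and that it has no isolated points. Throughout, I will repeatedly use the fact that $N$ is closed in $Z$, which translates into the equality $\cl(N)\cap Z=N$, whence $\cl(N)=N\cup D'$. For non-emptiness, observe that if $D'=\emptyset$ then $\cl(N)=N$; since $\cl(N)$ is a closed subset of the compact space $2^\omega$, this would make $N$ compact, contradicting $N\approx\omega^\omega$.

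For the absence of isolated points, suppose toward a contradiction that some $d\in D'$ is isolated in $D'$, and fix a clopen $U\subseteq 2^\omega$ with $U\cap D'=\{d\}$. Set $A=U\cap N$, which is a (possibly empty) clopen subset of $N$. The heart of the argument is pinning down $\cl(A)$. Since $U$ is clopen we have $\cl(A)\subseteq U$, and since $A\subseteq N$ we have $\cl(A)\subseteq\cl(N)$. Intersecting with $Z$ and using $\cl(N)\cap Z=N$ gives $\cl(A)\cap Z\subseteq U\cap N=A$, so in fact $\cl(A)\cap Z=A$; intersecting instead with $D$ gives $\cl(A)\cap D\subseteq U\cap D'=\{d\}$. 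Consequently $\cl(A)\subseteq A\cup\{d\}$.

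I would then derive a contradiction by cases. If $A=\emptyset$, then $U\cap\cl(N)=\{d\}$, so $\{d\}$ is a non-empty open subset of $\cl(N)$ disjoint from the dense set $N$, which is absurd. If $A\neq\emptyset$ and $d\notin\cl(A)$, then $A=\cl(A)$ is compact, which is impossible since a non-empty open subset of $N\approx\omega^\omega$ cannot be compact. Finally, if $A\neq\emptyset$ and $\cl(A)=A\cup\{d\}$, then $A=\cl(A)\setminus\{d\}$ is an open subspace of the compact space $\cl(A)$ and hence locally compact, contradicting the fact that every non-empty open subset of $\omega^\omega$ is nowhere locally compact. I expect the main obstacle to be the bookkeeping in the second paragraph—determining $\cl(A)$ exactly from the hypothesis that $N$ is closed in $Z$—since everything else reduces to the standard topological facts that $\omega^\omega$ is non-compact and nowhere locally compact, properties inherited by its non-empty open subspaces.
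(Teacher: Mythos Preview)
Your proof is correct and follows essentially the same approach as the paper's: non-emptiness via non-compactness of $\omega^\omega$, and absence of isolated points via the fact that $\omega^\omega$ has no non-empty locally compact open subsets. The only difference is cosmetic: the paper takes $U$ to be open in $\cl(N)$ with $U\cap D=\{x\}$, which immediately gives $U\cap N=U\setminus\{x\}$ as a non-empty locally compact open subset of $N$, thereby collapsing your three cases into a single line.
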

\begin{proof}
First observe that $D'=\varnothing$ would imply that $N$ is a closed, hence
compact, subset of $2^\omega$. Since this
contradicts the fact that $N\approx\omega^\omega$, it follows
that $D'$ is non-empty. Now assume, in order to get a contradiction, that $x$ is
an isolated point of $D'$. Let $U$ be an open subset
of $\cl(N)$ such that $U\cap D=\{x\}$. This would imply that $(U\cap
N)=U\setminus\{x\}$ is a non-empty locally compact open subset of $N$,
contradicting again the fact that $N\approx\omega^\omega$.
\end{proof}

\begin{proposition}\label{zfcbrendle}
Let $Y\subseteq\omega^\omega$ be such that the following conditions
hold.
\begin{enumerate}
\item\label{sacksnull} For every copy $K$ of $2^\omega$ in
$\omega^\omega$ there exists a copy $K'\subseteq K$ of $2^\omega$ such that
$K'\subseteq Y$.
\item\label{millernull} There exists a closed copy $N$ of $\omega^\omega$ in
$\omega^\omega$ such that $N'\nsubseteq Y$ whenever $N'\subseteq N$ is
a closed copy of $\omega^\omega$ in $\omega^\omega$.
\end{enumerate}
Fix a countable dense subset $D$ of $2^\omega$ and identify $\omega^\omega$ with
$2^\omega\setminus D$. Then the subspace $X=Y\cup D$ of $2^\omega$ has the
$\CBP$ but not the $\MP$.
\end{proposition}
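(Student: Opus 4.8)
The plan is to establish the two assertions separately: the $\CBP$ will come from condition (\ref{sacksnull}) alone, and the failure of the $\MP$ from condition (\ref{millernull}) alone. Throughout I write $Z=2^\omega\setminus D$, so that $Z\approx\omega^\omega$ and $Y\subseteq Z$; note that $2^\omega=Z\sqcup D$ and $X=Y\sqcup D$, whence $2^\omega\setminus X=Z\setminus Y$. I reserve $\cl$ for closure taken in $2^\omega$ and write $\cl_X$ for closure in $X$, so that $\cl_X(A)=\cl(A)\cap X$ for $A\subseteq X$.

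To verify the $\CBP$, let $F$ be a closed subspace of $X$ that is not scattered, and fix a non-empty crowded subspace $P\subseteq F$. Its closure $\bar P=\cl(P)$ is compact, zero-dimensional and crowded (since $P$ is dense and crowded in it), hence $\bar P\approx 2^\omega$. Because $\bar P\cap D$ is countable, a routine Cantor-scheme construction inside $\bar P$ produces a copy $K\approx 2^\omega$ with $K\subseteq\bar P\setminus D=\bar P\cap Z\subseteq\omega^\omega$. Now apply condition (\ref{sacksnull}) to $K$ to obtain a copy $K'\subseteq K$ of $2^\omega$ with $K'\subseteq Y$. Then $K'\subseteq Y\subseteq X$ and $K'\subseteq\bar P$, so $K'\subseteq\bar P\cap X=\cl_X(P)\subseteq F$, the last inclusion holding because $F$ is closed in $X$ and $P\subseteq F$. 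Thus $F$ contains a copy of $2^\omega$, as desired.

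To show that $X$ lacks the $\MP$, let $N$ be as in condition (\ref{millernull}) and put $D'=\cl(N)\cap D$, which is crowded by Lemma \ref{closurebaire}. I claim the countable crowded set $D'\subseteq X$ witnesses the failure of the $\MP$, i.e.\ that no crowded $Q'\subseteq D'$ has compact closure in $X$. Suppose otherwise and set $C=\cl_X(Q')$, a compact set. A compact subset of $X$ is closed in $2^\omega$, so one checks that $C=\cl(Q')\subseteq X$. As in the first part, $C$ is compact and crowded and $C\cap D\supseteq Q'$ is dense in $C$. Since $Q'\subseteq D'\subseteq\cl(N)$ we get $C=\cl(Q')\subseteq\cl(N)$, and because $N$ is closed in $Z$ we have $\cl(N)\cap Z=N$; hence the set $M:=C\cap Z$ is closed in $Z$ and satisfies $M\subseteq N$. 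Finally $C\subseteq X$ gives $M=C\cap Z\subseteq X\cap Z=Y$, so altogether $M\subseteq N\cap Y$.

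The crux will be to show that $M=C\setminus D$ is not $\sigma$-compact. Granting this, Hurewicz's dichotomy (see, e.g., Theorem 7.10 in \cite{kechris}) yields a copy $N'$ of $\omega^\omega$ closed in $M$, hence closed in $Z$; as $N'\subseteq M\subseteq N\cap Y$, this contradicts condition (\ref{millernull}) and finishes the argument. To see that $M$ is not $\sigma$-compact, suppose $M=\bigcup_{n}K_n$ with each $K_n$ compact. Each $K_n$ is then closed in $C$, as is each singleton of the countable set $C\cap D$, so $C=\bigcup_n K_n\cup(C\cap D)$ writes the compact, hence Baire, space $C$ as a countable union of closed sets. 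Since $C$ is crowded, the singletons have empty interior, so by the Baire category theorem some $K_n$ has non-empty interior $V$ in $C$. But $V\subseteq K_n\subseteq M=C\setminus D$ is a non-empty open subset of $C$ disjoint from $C\cap D$, contradicting the density of $C\cap D$ in $C$. The only real subtlety is this non-$\sigma$-compactness argument together with the correct application of Hurewicz's theorem; everything else is careful bookkeeping about closures in $2^\omega$, in $Z$, and in $X$.
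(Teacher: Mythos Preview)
Your proof is correct. The $\CBP$ half is essentially identical to the paper's argument. For the failure of the $\MP$, however, you take a different route than the paper. After forming the compact crowded set $C=\cl(Q')\subseteq X$ and the closed set $M=C\cap Z\subseteq N\cap Y$, the paper simply observes that $M=C\setminus D$ is \emph{itself} a closed copy of $\omega^\omega$: since $C\approx 2^\omega$ and $C\cap D\supseteq Q'$ is a countable dense subset of $C$, removing it yields a space homeomorphic to $\omega^\omega$ (this is the standard characterization of Baire space as Cantor space minus a countable dense set). So $N'=M$ already contradicts condition~(\ref{millernull}), and no further work is needed. Your approach instead shows by a Baire category argument that $M$ is not $\sigma$-compact and then invokes Hurewicz's dichotomy to extract a closed copy $N'\subseteq M$ of $\omega^\omega$. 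This is a valid detour, and it has the virtue of not relying on the topological characterization of $\omega^\omega$; on the other hand, it trades that elementary fact for the heavier Hurewicz theorem and an extra paragraph of argument. Either way the logic is sound.
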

\begin{proof}
Throughout this proof, $\cl$ will denote closure in $2^\omega$. First, we will
show that $X$ has the $\CBP$. Let $C$ be a closed subspace of $X$ that is not
scattered. Then there exists a crowded $C'\subseteq C$. Notice that $K=\cl(C')$
is a copy of $2^\omega$. Since
$D$ is countable, there exists a copy $K'\subseteq K$ of
$2^\omega$ such that $K'\subseteq\omega^\omega$.
Hence, by condition (\ref{sacksnull}), there exists a copy $K''\subseteq K'$ of
$2^\omega$ such that $K''\subseteq Y\subseteq X$.

Now assume, in order to get a contradiction, that $X$ has the $\MP$. Fix $N$ as
in condition (\ref{millernull}). Let $Q=\cl(N)\cap D$, and notice that $Q$  is
crowded by Lemma \ref{closurebaire}.
Therefore, by the $\MP$, there exists a crowded $Q'\subseteq Q$ with compact
closure $K$ in $X$. Notice that $K$ is a copy of $2^\omega$. But then
$N'=K\setminus D\subseteq N$ would be a closed copy of $\omega^\omega$ in
$\omega^\omega$, contradicting our assumptions on $N$.
\end{proof}

Recall that a subset $X$ of an uncountable Polish space
$Z$ is a \emph{Bernstein set} if $X\cap K\neq\varnothing$ and $(Z\setminus
X)\cap K\neq\varnothing$ for every copy $K$ of $2^\omega$ in $Z$. It is easy to
see that Bernstein sets exist in $\ZFC$ (use the same method as in the
proof of Example 8.24 in \cite{kechris}). Since $2^\omega\approx 2^\omega\times
2^\omega$, every Bernstein set has size $\cccc$.

\begin{proposition}\label{zfcbernstein}
Let $X$ be a Bernstein set in some uncountable Polish space $Z$. Then $X$ is
$\CB$ but it does not
have the $\CBP$.
\end{proposition}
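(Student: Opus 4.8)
The plan is to handle the two assertions separately; both ultimately rest on the defining feature of a Bernstein set, namely that it (and its complement) meets every copy of $2^\omega$ in $Z$, and in fact meets every such copy in a set of size $\cccc$.

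For the failure of the $\CBP$ I would simply exhibit $X$ itself as the offending closed subspace. First, $X$ cannot contain a copy $K$ of $2^\omega$, since otherwise $(Z\setminus X)\cap K=\varnothing$, contradicting the Bernstein property. Second, $X$ is not scattered: being a Bernstein set it has size $\cccc$ (as already noted in the excerpt), hence it is uncountable, while every scattered separable metrizable space is second countable and therefore countable. Since $X$ is trivially closed in itself, it is a closed subspace of $X$ that is neither scattered nor contains a copy of $2^\omega$, which is precisely a witness to the failure of the $\CBP$.

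For the fact that $X$ is $\CB$ I would argue by contradiction, using Corollary 1.9.13 in \cite{vanmill} exactly as in the proof of Theorem \ref{mainzfc}: if $X$ is not $\CB$, then it contains a subspace $Q\approx\QQQ$ that is closed in $X$. Setting $K=\cl(Q)$ with the closure taken in $Z$, the fact that $Q$ is closed in $X$ gives $K\cap X=\cl(Q)\cap X=Q$, so $K\cap X$ is countable. On the other hand, $K$ is the closure in the Polish space $Z$ of a nonempty crowded set, hence $K$ is itself a nonempty crowded Polish space, so it is perfect and therefore contains a copy $P$ of $2^\omega$.

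The key step, and the only place requiring a little care, is to deduce from the Bernstein property that $K\cap X$ is actually uncountable, contradicting the previous paragraph. For this I would use that $P\approx 2^\omega\approx 2^\omega\times 2^\omega$, so $P$ decomposes into $\cccc$ many pairwise disjoint copies $\{P_s:s\in 2^\omega\}$ of $2^\omega$, each of which is a copy of $2^\omega$ in $Z$. By the Bernstein property $X\cap P_s\neq\varnothing$ for every $s$, and the disjointness of the $P_s$ then yields $\cccc$ distinct points of $X$ lying in $P\subseteq K$; hence $|K\cap X|=\cccc$, contradicting $K\cap X=Q$ countable. I expect the main obstacle to be the bookkeeping about which closure is being taken (in $Z$ versus in $X$) and the verification that the closure of the crowded countable set $Q$ is a genuine perfect Polish space, so that the copies of $2^\omega$ are available; once that is settled, everything reduces to a direct application of the Bernstein property.
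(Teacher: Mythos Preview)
Your proof is correct. The treatment of the $\CBP$ failure is essentially identical to the paper's. For the $\CB$ part you take a slightly different route: the paper passes directly to $G=\cl(Q)\setminus Q$, observes that this is an uncountable $\Gd$ subset of $Z$ (hence Polish, hence containing a copy of $2^\omega$) with $G\cap X=\varnothing$, and obtains an immediate contradiction with the Bernstein property. You instead work inside $K=\cl(Q)$, find a copy $P$ of $2^\omega$ there, and then split $P$ into $\cccc$ many disjoint Cantor sets to force $|K\cap X|=\cccc$. This is valid, but the splitting step is an avoidable detour: once you know $K\cap X=Q$ is countable, the set $K\setminus Q$ (which is exactly the paper's $G$) already contains a copy of $2^\omega$ lying entirely outside $X$, and a single invocation of the Bernstein property finishes the argument. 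So your approach trades one clean observation for a product decomposition, but both reach the same contradiction.
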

\begin{proof}
The space $X$ does not have the $\CBP$ because $X$ itself is a non-scattered
closed subspace of $X$ containing no copies of $2^\omega$. Now assume, in order to
get a contradiction, that $X$ is not $\CB$.
By Corollary 1.9.13 in \cite{vanmill}, it follows that $X$ contains a closed
subspace $Q$ homeomorphic to the rationals $\QQQ$. Let $G=\cl(Q)\setminus Q$,
where the closure is taken in $Z$. It is easy to realize that $G$ is an
uncountable $\Gd$ subset of $Z$. Therefore $G$ contains a copy of $2^\omega$.
Since $G\cap X=\varnothing$, this contradicts the fact that $X$ is a  Bernstein
set.
\end{proof}

\section{Preliminaries about definability}

Our reference for descriptive set theory will be \cite{kechris}.
In this section, $\bG$ will always denote one of the (boldface)
projective pointclasses
$\mathbf{\Sigma}^1_n$, $\mathbf{\Pi}^1_n$ or $\mathbf{\Delta}^1_n$, where $n$ is
a non-zero natural number.
It is well-known how to define a \emph{subset} of complexity
$\bG$ of a given Polish space. Since it seems to be slightly less
well-known that this can be easily extended to
arbitrary \emph{spaces}, we will recall the following definition (which
coincides with the one given at the end of page 315 in \cite{kechris}). We will
say that a space $X$ \emph{embeds}
in a space $Z$ if there exists a subspace $X'$ of $Z$ such that $X'\approx X$.

\begin{definition}
Let $X$ be a space and $\bG$ a pointclass. We will say that $X$ is a
\emph{space of complexity
$\bG$} (briefly, a $\bG$ space) if there exists a Polish
space in which $X$ embeds as a subset of
complexity $\bG$. We will say that $X$ is a \emph{projective space}
if it is a space of complexity
$\bG$ for some $\bG$.
\end{definition}

The following `reassuring' proposition, which can be safely assumed to be
folklore, shows that the choice of the Polish space in the above definition is
irrelevant.

\begin{proposition}
Let $X$ be a space. The following are equivalent.
\begin{enumerate}
\item\label{some} $X$ is a $\bG$ space.
\item\label{every} $X$ is a $\bG$ subset of every Polish space in
which it
embeds.
\end{enumerate}
\end{proposition}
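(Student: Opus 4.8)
The plan is to prove the non-trivial implication $(\ref{some})\Rightarrow(\ref{every})$ by extending homeomorphisms to $\Gd$ sets via Lavrentiev's theorem, and to dispose of $(\ref{every})\Rightarrow(\ref{some})$ in one line. For the latter, recall that every separable metrizable space embeds in the Hilbert cube $[0,1]^\omega$, which is Polish. Hence, if $X$ is a $\bG$ subset of \emph{every} Polish space in which it embeds, then in particular it is a $\bG$ subset of $[0,1]^\omega$, which immediately gives that $X$ is a $\bG$ space.

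For $(\ref{some})\Rightarrow(\ref{every})$, first I would fix a witness to $(\ref{some})$: a Polish space $Z_0$ with a subspace $X'\approx X$ such that $X'$ is $\bG$ in $Z_0$. Then I would let $Z$ be an arbitrary Polish space with a subspace $X''\approx X$, and set out to show that $X''$ is $\bG$ in $Z$. Composing the two homeomorphisms gives a homeomorphism $h:X'\to X''$, and the key step is to apply Lavrentiev's theorem (see \cite{kechris}) to extend $h$ to a homeomorphism $\tilde h:A\to B$, where $A$ is a $\Gd$ subset of $Z_0$ with $X'\subseteq A$ and $B$ is a $\Gd$ subset of $Z$ with $X''\subseteq B$. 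Since $\Gd$ subsets of Polish spaces are Polish, both $A$ and $B$ are Polish, and $\tilde h\upharpoonright X'=h$, so $\tilde h[X']=X''$.

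It then remains to transfer complexity along $\tilde h$, which is pure bookkeeping. Since $X'$ is $\bG$ in $Z_0$ and relative $\bG$ subsets of a subspace are exactly the traces of $\bG$ subsets of the ambient space, $X'=X'\cap A$ is $\bG$ relative to $A$. Because $\tilde h$ is a homeomorphism and the boldface projective pointclasses are closed under continuous preimages, $X''=(\tilde h^{-1})^{-1}[X']$ is $\bG$ relative to $B$. Finally $B$, being $\Gd$, is Borel in $Z$; writing $X''=T\cap B$ with $T$ a $\bG$ subset of $Z$ and using that $\bG$ contains all Borel sets (as $n\geq 1$) and is closed under finite intersections, I conclude that $X''$ is $\bG$ in $Z$, as desired.

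The real content sits entirely in the single application of Lavrentiev's theorem, which converts an arbitrary homeomorphism between the two copies of $X$ into a homeomorphism between $\Gd$ (hence Polish) overspaces; everything after that is the transfer of $\bG$ under restriction to a subspace, under homeomorphism, and under re-expansion from a Borel subspace to the ambient space. The only point I would take care to state precisely is that these three closure properties of $\mathbf{\Sigma}^1_n$, $\mathbf{\Pi}^1_n$ and $\mathbf{\Delta}^1_n$ hold uniformly for every $n\geq 1$, so that the argument is insensitive to which of the three forms $\bG$ takes.
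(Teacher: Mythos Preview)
Your proof is correct, but it proceeds differently from the paper's. Both arguments handle $(\ref{every})\Rightarrow(\ref{some})$ identically via embedding in $[0,1]^\omega$, and both invoke Lavrentiev's theorem for the hard direction. The difference lies in how the complexity is transferred. The paper argues by induction on the level of the hierarchy: it treats $\mathbf{\Sigma}^1_1$ directly from the definition as continuous images of Polish spaces, then uses Lavrentiev only to pass from $\mathbf{\Sigma}^1_n$ to $\mathbf{\Pi}^1_n$ (by showing the complement decomposes as an $\Fs$ piece plus a piece homeomorphic to a $\mathbf{\Sigma}^1_n$ set), and finally passes from $\mathbf{\Pi}^1_n$ to $\mathbf{\Sigma}^1_{n+1}$ by pushing the projection through $h\times\id$. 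Your argument instead applies Lavrentiev once and then appeals to three closure properties of projective pointclasses as black boxes: closure under continuous preimages between Polish spaces, the trace/relativization property for $\Gd$ subspaces, and closure under intersection with Borel sets. This buys you a uniform one-shot argument at the cost of importing more machinery; the paper's induction is more self-contained but longer. One small point: your detour through ``writing $X''=T\cap B$'' is unnecessary, since for a $\Gd$ subspace $B$ of $Z$ one has $\bG(B)\subseteq\bG(Z)$ outright (this is the content of the inductive verification you allude to in your final paragraph), so you could conclude $X''\in\bG(Z)$ directly from $X''\in\bG(B)$.
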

\begin{proof}
The implication $(\ref{every})\rightarrow (\ref{some})$ follows from the
standard fact that
every space embeds in the Polish space $[0,1]^\omega$. In order to prove
$(\ref{some})\rightarrow (\ref{every})$, we will proceed by
induction. Clearly,
it will be enough to deal with the cases $\bG=\mathbf{\Sigma}^1_n$
and
$\bG=\mathbf{\Pi}^1_n$. Notice that the case
$\bG=\mathbf{\Sigma}^1_1$ is trivial,
because such sets are by definition continuous images of a Polish space, and
this property is preserved by homeomorphisms.

Now assume that the result holds for $\bG=\mathbf{\Sigma}^1_n$, and
let $X$ be a
$\mathbf{\Pi}^1_n$ space. Assume that $X$ is a subspace of a Polish
space $Z$, and that $X$ is a $\mathbf{\Pi}^1_n$ subset of $Z$.
Let $X'$ be a subspace of a Polish space $Z'$, and assume that
$h:X\longrightarrow X'$ is a homeomorphism. We will show that $X'$ is
a $\mathbf{\Pi}^1_n$ subset of $Z'$. By Lavrentiev's Theorem (see Theorem 3.9 in
\cite{kechris}), there exists a homeomorphism $f:G\longrightarrow G'$ that
extends $h$,
where $G\supseteq X$ is a $\Gd$ subset of $Z$ and $G'\supseteq X'$ is a $\Gd$
subset of $Z'$. It will be enough to show that $Z'\setminus X'=(Z'\setminus
G')\cup (G'\setminus
X')$ is a $\mathbf{\Sigma}^1_n$ subset of $Z'$.
But $Z'\setminus G'$ is a $\mathbf{\Sigma}^1_n$ subset of $Z'$ because it is an
$\Fs$,
and $G'\setminus X'$ is a $\mathbf{\Sigma}^1_n$ subset of $Z'$
by the inductive hypothesis, being homeomorphic to the $\mathbf{\Sigma}^1_n$
space
$G\setminus X=(Z\setminus X)\cap G$.

Finally, assume that the result holds for $\bG=\mathbf{\Pi}^1_n$,
and let $X$ be a
$\mathbf{\Sigma}^1_{n+1}$ space.
Assume that $X$ is a subspace of a Polish
space $Z$, and that $X$ is a $\mathbf{\Sigma}^1_{n+1}$ subset of $Z$.
This means that $X=\pi[Y]$, where $Y$ is a $\mathbf{\Pi}^1_n$ subset of
$Z\times\omega^\omega$ and $\pi:Z\times\omega^\omega\longrightarrow Z$
is the projection on the first coordinate. Let $X'$ be a subspace of a Polish
space $Z'$, and assume that $h:X\longrightarrow X'$ is a homeomorphism. We will
show that $X'$ is
a $\mathbf{\Sigma}^1_{n+1}$ subset of $Z'$.
Observe that the function $h\times\id:X\times\omega^\omega\longrightarrow
X'\times\omega^\omega$ defined by $(h\times\id)\langle x,w\rangle=\langle
h(x),w\rangle$ is a
homeomorphism, and let $Y'=(h\times\id)[Y]$.
Notice that $Y'$ is a $\mathbf{\Pi}^1_n$ subset of $Z'\times\omega^\omega$ by
the
inductive hypothesis, being homeomorphic to the $\mathbf{\Pi}^1_n$ space $Y$.
It is clear that $X'=\pi'[Y']$, where $\pi':Z'\times\omega^\omega\longrightarrow
Z'$ is the projection on the first coordinate.
\end{proof}

\section{Definable spaces}

As we mentioned in the introduction, the properties that we are interested in
become equivalent
under certain definability assumptions. The following theorem,
which can be viewed as a `definable' analogue of Theorem \ref{mainzfc}, makes
this precise. Theorem \ref{maind} also states that, under the axiom of
Projective Determinacy, these properties
become equivalent for every projective space. This is the reason why the
\emph{definable} counterexamples
that we obtain could not have been constructed in $\ZFC$ alone. Also notice that
these counterexamples are optimal, in the sense that their complexity is as low
as possible.

\begin{theorem}\label{maind}
Consider the following conditions on a space $X$.
\begin{enumerate}
\item\label{polishd} $X$ is Polish
\item\label{millerpropertyd} $X$ has the $\MP$.
\item\label{cantorbendixsond} $X$ has the $\CBP$.
\item\label{completelybaired} $X$ is $\CB$.
\end{enumerate}
If $X$ is $\coan$ then $(\ref{polishd})\leftrightarrow
(\ref{millerpropertyd})\leftrightarrow (\ref{cantorbendixsond})\leftrightarrow
(\ref{completelybaired})$. Under the axiom of Projective Determinacy this holds
whenever $X$
is
projective. If $X$ is $\an$ then $(\ref{millerpropertyd})\leftrightarrow
(\ref{cantorbendixsond})\leftrightarrow
(\ref{completelybaired})$. There exists a consistent $\an$ counterexample to the
implication $(\ref{polishd})\leftarrow
(\ref{millerpropertyd})$. There exists a consistent $\del$
counterexample to the implication $(i)\leftarrow
(i+1)$ for $i=2,3$.
\end{theorem}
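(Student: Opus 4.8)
The plan is to read Theorem \ref{maind} as an assembly of the forward implications already furnished by Theorem \ref{mainzfc} together with a short list of reverse arrows whose validity is controlled by the definability of $X$, plus three definable counterexamples. Since $(\ref{polishd})\rightarrow(\ref{millerpropertyd})\rightarrow(\ref{cantorbendixsond})\rightarrow(\ref{completelybaired})$ holds for \emph{every} space by Theorem \ref{mainzfc}, each equivalence clause reduces to closing a loop with a single reverse implication, and each counterexample clause reduces to producing a definable space that realizes exactly the gap between two consecutive properties.

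The analytic and coanalytic equivalences rest on two structural dichotomies. For the $\coan$ case the decisive input is Hurewicz's classical theorem: a coanalytic space is either Polish or contains a closed copy of $\QQQ$. Combined with the characterization already exploited in the proof of Theorem \ref{mainzfc} (a space fails to be $\CB$ precisely when it contains a closed copy of $\QQQ$), this yields $(\ref{completelybaired})\rightarrow(\ref{polishd})$ for coanalytic $X$, and hence the full chain of equivalences. For the $\an$ case one cannot recover ``Polish'' --- the counterexample below rules this out --- but it is enough to establish $(\ref{completelybaired})\rightarrow(\ref{millerpropertyd})$, which I would take to be the content of Corollary \ref{ancbmp} (an analytic $\CB$ space has the $\MP$). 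Feeding this into the always-true forward arrows closes the loop $(\ref{millerpropertyd})\leftrightarrow(\ref{cantorbendixsond})\leftrightarrow(\ref{completelybaired})$.

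For the Projective Determinacy clause I would argue that, under $\mathsf{PD}$, the Hurewicz dichotomy ``Polish or closed copy of $\QQQ$'' lifts verbatim to every projective pointclass: determinacy of the relevant games supplies the tree representations and scales needed to run Hurewicz's argument without change, so a projective $\CB$ space is again Polish and all four properties coincide. I expect the care here to lie not in any new idea but in quoting the determinacy-based generalization at exactly the right level rather than reproving it.

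Finally, the three counterexamples. For $(\ref{polishd})\leftarrow(\ref{millerpropertyd})$ at complexity $\an$ I would rerun the construction of Proposition \ref{zfclambda} with a definable witness: set $X=2^\omega\setminus Y$ with $Y$ a $\coan$ $\lambda'$-set of size $\omega_1$, so that $X$ is $\an$, has the $\MP$, and is not Polish. Such a coanalytic $\lambda'$-set cannot exist outright, since an uncountable $\coan$ set with the perfect set property contains a perfect set, which is never a $\lambda$-set; one must therefore arrange the failure of the perfect set property for $\coan$ sets while simultaneously preserving the $\lambda'$-property, and securing both at once is where the Martin--Solovay theorem enters (Proposition \ref{conslambda}). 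I regard this simultaneous control of complexity and combinatorics as the main obstacle of the whole theorem. For $(\ref{cantorbendixsond})\leftarrow(\ref{completelybaired})$ and $(\ref{millerpropertyd})\leftarrow(\ref{cantorbendixsond})$ I would make the $\ZFC$ constructions of Propositions \ref{zfcbernstein} and \ref{zfcbrendle} definable at level $\del$ --- a $\del$ Bernstein set and a $\del$ rendering of Brendle's set --- which is available under hypotheses such as $\VL$ that provide a $\del$ wellordering of the reals. That $\del$ is optimal for these two follows from the equivalences proved above: a coanalytic or analytic witness to either gap would, via Hurewicz's theorem or Corollary \ref{ancbmp}, collapse the very two properties it is meant to separate.
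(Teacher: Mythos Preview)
Your proposal is correct and follows essentially the same route as the paper: forward implications from Theorem \ref{mainzfc}, Hurewicz's dichotomy for the $\coan$ case and its $\mathsf{PD}$-extension for the projective case, Corollary \ref{ancbmp} for the $\an$ case, and Propositions \ref{conslambda}, \ref{consbrendle}, \ref{consbernstein} for the three consistent counterexamples. Your closing remark on optimality of $\del$ is a pleasant addition not spelled out in the paper's proof, though it is implicit in the theorem's structure.
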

\begin{proof}
By Theorem \ref{mainzfc}, in order to prove the first statement, it will be
enough to show that $(\ref{completelybaired})\rightarrow
(\ref{polishd})$ whenever $X$ is $\coan$. This is exactly what a classical
theorem of Hurewicz states (see Corollary 21.21
in \cite{kechris}). Since under the axiom of Projective Determinacy this theorem
extends to
every projective space (see
Exercise 28.20 in \cite{kechris}), the second statement holds. In order to prove
the third statement, it will be enough to show that
$(\ref{completelybaired})\rightarrow
(\ref{millerpropertyd})$ whenever $X$ is $\an$. This is the content of
Corollary \ref{ancbmp}. The fourth statement follows from Proposition
\ref{conslambda}. The fifth
statement follows from Proposition \ref{consbrendle} (for the case $i=2$) and
Proposition \ref{consbernstein} (in the case $i=3$).
\end{proof}

The following two results are well-known. For a proof of Theorem \ref{spsp}, see
Corollary 21.23 in \cite{kechris}. Recall that a subset $A$ of $\omega^\omega$
is \emph{Miller-measurable} if for every closed copy $N$ of $\omega^\omega$ in
$\omega^\omega$ there exists
a closed copy $N'\subseteq N$ of $\omega^\omega$ in $\omega^\omega$ such that
$N'\subseteq A$ or $N'\subseteq\omega^\omega\setminus A$.

\begin{theorem}[Kechris; Saint Raymond]\label{spsp}
Assume that $A$ is a $\an$ subset of $\omega^\omega$.
Then (exactly) one of the following alternatives holds.
\begin{enumerate}
\item There exist compact subsets $K_n$ of $\omega^\omega$ for $n\in\omega$ such
that $A\subseteq\bigcup_{n\in\omega}K_n$.
\item There exists a closed copy $N$ of $\omega^\omega$ in $\omega^\omega$ such
that $N'\subseteq A$.
\end{enumerate}
\end{theorem}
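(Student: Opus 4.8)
The plan is to obtain this as the Hurewicz dichotomy specialised to the ambient space $\omega^\omega$, and I would first dispose of the exclusivity clause (the word ``exactly''), which is elementary. Every compact $K\subseteq\omega^\omega$ is nowhere dense: a compact set is bounded, hence contains no basic clopen set $N_s=\{x:s\subseteq x\}$, since within $N_s$ the $|s|$-th coordinate is unbounded, so $K$ is closed with empty interior. By the Baire category theorem $\omega^\omega$ is therefore not $\sigma$-compact. Consequently the two alternatives are incompatible: if a closed copy $N\approx\omega^\omega$ satisfied $N\subseteq A\subseteq\bigcup_{n}K_n$, then $N=\bigcup_n(N\cap K_n)$ would realise $\omega^\omega$ as a countable union of compact sets, a contradiction.

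The substance is that at least one alternative holds, and I would prove this directly, beginning with the case of closed $A=F$. Consider the transfinite derivative on closed subsets of $\omega^\omega$ given by $F_0=F$, $F_{\alpha+1}=F_\alpha\setminus U_\alpha$, where $U_\alpha$ is the union of all relatively open $V\subseteq F_\alpha$ whose closure in $F_\alpha$ is compact, and $F_\lambda=\bigcap_{\alpha<\lambda}F_\alpha$ at limits. Each $U_\alpha$ is $\sigma$-compact (being open in the second countable space $F_\alpha$, it is a countable union of such $V$), and since the $F_\alpha$ form a decreasing sequence of closed sets the derivative stabilises at a countable ordinal on a kernel $F_\infty$. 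If $F_\infty=\varnothing$ then $F=\bigcup_\alpha U_\alpha$ is $\sigma$-compact, giving alternative (1). Otherwise $F_\infty$ is a non-empty closed set with no relatively open subset of compact closure, i.e.\ it is nowhere locally compact, and a standard fusion argument produces a superperfect tree $S$ with $[S]\subseteq F_\infty$; its branch space is the required closed copy of $\omega^\omega$ inside $A$.

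To handle an arbitrary $\an$ set I would unfold. Write $A=\pi[F]$ with $F\subseteq\omega^\omega\times\omega^\omega$ closed and $\pi$ the projection to the first coordinate, and run the analogous derivative on $F$ inside the product (which is homeomorphic to $\omega^\omega$), but now discarding a relatively open piece $V$ exactly when $\pi[\cl(V)]$ is compact, where the closure is taken in the product, rather than when $\cl(V)$ itself is. If the kernel is empty, the projections of the removed pieces exhibit $A$ as contained in a countable union of compact sets, yielding alternative (1). If the kernel is non-empty, I would extract from it a superperfect tree $S$ on $\omega\times\omega$ arranged so that distinct branches have distinct first coordinates; then $\pi$ restricted to $[S]$ is a continuous injection whose image is a closed copy of $\omega^\omega$ contained in $A$, yielding alternative (2).

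The main obstacle is precisely this unfolded extraction: one must run the derivative so that ``the projection is not $\sigma$-compact'' is the invariant preserved at each stage, and then build the product superperfect tree while simultaneously guaranteeing that $\pi$ is injective on its branches and that the projected image is genuinely closed in $\omega^\omega$ (and not merely analytic) --- it is exactly this faithful, relatively closed copy of $\omega^\omega$ that the applications in the sequel require. A more economical route, which I would adopt in practice, is simply to invoke the Hurewicz dichotomy with ambient space $\omega^\omega$ (cf.\ Theorem 21.18 and Corollary 21.23 in \cite{kechris}), under which alternative (1) reads as containment in a $\sigma$-compact set and alternative (2) as containment of a relatively closed copy of $\omega^\omega$; the exclusivity clause is then supplied by the observation of the first paragraph that $\omega^\omega$ is not $\sigma$-compact.
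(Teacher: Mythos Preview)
Your proposal is correct, and in its bottom line it coincides exactly with what the paper does: the paper gives no proof of this theorem at all, but simply records ``For a proof of Theorem~\ref{spsp}, see Corollary~21.23 in~\cite{kechris}.'' Your final paragraph lands on precisely this citation, so there is no divergence to discuss.

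The earlier paragraphs of your proposal go well beyond the paper by sketching the actual argument (the locally-compact-kernel derivative for closed sets, then the unfolded version for $\an$ sets). That sketch is sound, and you correctly flag the genuine technical point --- arranging the product superperfect tree so that the projection is injective and its image is \emph{closed} in $\omega^\omega$, not merely analytic. None of this appears in the paper, which treats the result as a black box.
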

\begin{corollary}\label{anmm}
Every $\an$ subset of $\omega^\omega$ is Miller-measurable.
\end{corollary}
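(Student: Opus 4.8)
The plan is to derive Corollary \ref{anmm} directly from Theorem \ref{spsp} by applying the latter to a well-chosen relative of the given $\an$ set. So let $A\subseteq\omega^\omega$ be $\an$ and fix an arbitrary closed copy $N$ of $\omega^\omega$ in $\omega^\omega$. My goal is to produce a closed copy $N'\subseteq N$ with $N'\subseteq A$ or $N'\subseteq\omega^\omega\setminus A$. The natural move is to work inside $N$: since $N\approx\omega^\omega$ and $N$ is closed in $\omega^\omega$, I would fix a homeomorphism $\varphi:\omega^\omega\longrightarrow N$ and consider the set $B=\varphi^{-1}[A\cap N]$, which is an $\an$ subset of $\omega^\omega$ because $\an$ is preserved under continuous preimages (equivalently, under homeomorphisms, combined with $A\cap N$ being $\an$ as the intersection of an $\an$ set with a closed set).

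First I would apply Theorem \ref{spsp} to $B$. If alternative (2) holds, there is a closed copy $M$ of $\omega^\omega$ in $\omega^\omega$ with $M\subseteq B$; then $N'=\varphi[M]$ is a closed copy of $\omega^\omega$ sitting inside $N$ (here I use that $\varphi$ is a homeomorphism onto the closed set $N$, so the image of a closed-in-$\omega^\omega$ copy is closed in $N$, hence closed in $\omega^\omega$), and $N'\subseteq A$ as required. The remaining case is when alternative (1) holds for $B$, i.e.\ $B$ is contained in a countable union of compact sets. The subtlety to address here is that covering by compacta does not by itself hand me a Miller cube inside the complement; I need to feed the \emph{complement} back into the theorem rather than merely the covering. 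Thus the key step I expect to be the main obstacle is handling this case cleanly.

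To resolve it, I would apply Theorem \ref{spsp} a second time, now to the set $\omega^\omega\setminus B$. Note that I cannot assume $\omega^\omega\setminus B$ is $\an$, so I cannot invoke the theorem for it directly; instead the correct observation is that alternative (1) for $B$ forces alternative (2) for the complement via a Baire-category argument inside a single compact piece. Concretely, if $B\subseteq\bigcup_{n\in\omega}K_n$ with each $K_n$ compact, then each $K_n$ is nowhere dense in $\omega^\omega$ (a compact subset of $\omega^\omega$ has empty interior, since $\omega^\omega$ has no nonempty compact open sets), so by the Baire category theorem $\omega^\omega\setminus\bigcup_n K_n$ is a dense $\Gd$, and in particular it contains a closed copy $M$ of $\omega^\omega$; one produces such $M$ by a standard fusion/pruning construction of a superperfect (Miller) tree avoiding all the $K_n$. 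Then $M\subseteq\omega^\omega\setminus B$, and $N'=\varphi[M]\subseteq\omega^\omega\setminus A$ is the desired closed copy, completing the argument. The only points requiring care are that a dense $\Gd$ subset of $\omega^\omega$ contains a closed copy of $\omega^\omega$, and that $\varphi$ transports closed-in-$\omega^\omega$ copies to closed-in-$\omega^\omega$ copies; both are routine once stated.
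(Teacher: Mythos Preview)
Your argument is correct. The paper states Corollary~\ref{anmm} without proof, treating it as an immediate consequence of Theorem~\ref{spsp}; your write-up is precisely the natural way to unpack that consequence: pull back along a homeomorphism $\varphi:\omega^\omega\to N$, apply Theorem~\ref{spsp} to the analytic set $B=\varphi^{-1}[A\cap N]$, and in the $\sigma$-compact case use that compact subsets of $\omega^\omega$ are nowhere dense together with a superperfect-tree fusion to find a closed copy of $\omega^\omega$ inside the complement. One stylistic remark: the sentence announcing a ``second application of Theorem~\ref{spsp} to $\omega^\omega\setminus B$'' is misleading, since (as you immediately note) the complement need not be $\an$ and you never actually invoke the theorem for it; you might simply go straight to the Baire-category/fusion step, which is the real content of that case.
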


\begin{theorem}\label{mmcbeqmp}
Assume that $\bG$ is a projective pointclass such that every $\bG$ subset of
$\omega^\omega$ is Miller-measurable. Let $X$ be a $\bG$ space that is $\CB$.
Then $X$ has the $\MP$.
\end{theorem}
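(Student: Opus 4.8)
The plan is to argue by contradiction: suppose $X$ is a $\bG$ space that is $\CB$ but fails the $\MP$. Fix an embedding of $X$ as a $\bG$ subset of a \emph{compact} metrizable space $Z$ (for instance the Hilbert cube), and let $\cl$ denote closure in $Z$. Because $Z$ is compact, a set $Q'\subseteq X$ has compact closure in $X$ if and only if $\cl(Q')\subseteq X$. Let $Q_0\subseteq X$ be a countable crowded set witnessing the failure of the $\MP$, so that no crowded subset of $Q_0$ has compact closure in $X$. First I would pass to a better witness: by a routine Cantor scheme construction (clopen-in-$Q_0$ sets $U_s$, $s\in 2^{<\omega}$, with shrinking diameters and disjoint sibling closures, arranged so that the eventually-constant branches converge to points of $Q_0$), one obtains a crowded $Q\subseteq Q_0$ whose closure $K:=\cl(Q)$ is a Cantor set with $Q$ dense in it. This change is harmless, since every crowded subset of $Q$ is a crowded subset of $Q_0$ and so still fails to have compact closure in $X$.

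Next, set $N:=K\setminus Q$. As the complement of a countable dense set in a Cantor set, $N$ is nonempty, zero-dimensional, Polish and nowhere locally compact, hence homeomorphic to $\omega^\omega$ (Theorem 7.7 in \cite{kechris}); fix such an identification. Let $A:=X\cap N$. Since $N$ is $\Gd$ in $Z$ and $X$ is $\bG$ in $Z$, the set $A$ is $\bG$ in $Z$; as being a $\bG$ space is intrinsic (Section 4), $A$ is a $\bG$ subset of $\omega^\omega\approx N$, and therefore Miller-measurable by hypothesis. Applying Miller-measurability to the closed copy $N$ of $\omega^\omega$ itself produces a closed copy $M\subseteq N$ of $\omega^\omega$ with either $M\subseteq A$ or $M\cap A=\varnothing$. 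Since $M$ is closed in $N=K\setminus Q$, we have $\cl(M)\setminus M\subseteq Q$, so, putting $D':=\cl(M)\cap Q$, we obtain the disjoint decomposition $\cl(M)=M\cup D'$; moreover Lemma \ref{closurebaire}, applied to the Cantor set $K$ with $Q$ and $M$ in the roles of its $D$ and $N$, shows that $D'$ is crowded.

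It remains to refute both alternatives. If $M\subseteq A\subseteq X$, then $\cl(M)=M\cup D'\subseteq X$ is compact, so the crowded set $D'\subseteq Q\subseteq Q_0$ has compact closure in $X$, contradicting the choice of $Q_0$. If instead $M\cap A=\varnothing$, then $M\cap X=M\cap A=\varnothing$ (as $M\subseteq N$), whence $\cl(M)\cap X=D'$; as $\cl(M)$ is closed in $Z$, this realizes the nonempty countable crowded set $D'$, which is homeomorphic to $\QQQ$, as a \emph{closed} subspace of $X$. Since $\QQQ$ is not a Baire space, this contradicts $X$ being $\CB$. Either way we reach a contradiction, so $X$ has the $\MP$.

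The reduction to a Cantor-set closure is the main technical nuisance (the ambient $X$ need not be zero-dimensional, so $\cl(Q_0)$ itself may fail to be a Cantor set); I expect this to be the fiddliest step, though it is standard. The conceptual heart, and the point I would check most carefully, is that the two halves of the Miller dichotomy fail for \emph{opposite} reasons: the alternative $M\subseteq A$ contradicts the failure of the $\MP$ directly, while the alternative $M\cap A=\varnothing$ is precisely what pins $D'$ down as a closed copy of $\QQQ$ and so contradicts $\CB$. Establishing that $D'$ is genuinely closed in $X$ (not merely crowded) in this second case is the linchpin of the argument.
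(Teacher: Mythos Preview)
Your argument is correct and follows essentially the same route as the paper's proof: reduce to a setting where $\cl(Q)$ is a Cantor set, apply Miller-measurability to $X\cap(\cl(Q)\setminus Q)$, and use Lemma~\ref{closurebaire} to show that one alternative contradicts $\CB$ while the other yields the desired crowded $Q'$ with compact closure. The only difference is in the preliminary reduction: the paper intersects $X$ with the complement of countably many suitably chosen spheres in $[0,1]^\omega$ to make $X$ zero-dimensional (so that $\cl(Q)$ is automatically a Cantor set once $X\subseteq 2^\omega$), whereas you leave $X$ untouched and instead refine $Q_0$ to a $Q$ whose closure is a Cantor set via a Cantor scheme; both devices are standard and serve the same purpose.
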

\begin{proof}
Fix a compatible metric $d$ on $[0,1]^\omega$. Given $x\in [0,1]^\omega$ and
$\varepsilon > 0$, let $S(x,\varepsilon)=\{z\in
[0,1]^\omega:d(x,z)=\varepsilon\}$. Assume, without loss of generality, that $X$
is a subspace of $[0,1]^\omega$. Fix a countable crowded subset $Q$ of $X$.
Let $D\supseteq Q$ be a countable dense subset of $[0,1]^\omega$. Since $D$ is
countable, there exist $\varepsilon_n >0$ for $n\in\omega$ such that
$\varepsilon_n\to 0$ as $n\to\infty$ and
$\{\varepsilon_n:n\in\omega\}\cap\{d(x,y):x,y\in D\}=\varnothing$. It is easy to
check that
$$
\left([0,1]^\omega\setminus\bigcup\{S(x,\varepsilon_n):x\in D,
n\in\omega\}\right)\cap X
$$
is a zero-dimensional $\bG$ space containing $Q$. Therefore, we can assume
without loss of generality that $X$ is zero-dimensional.
We will actually assume that $X$ is a subspace of $2^\omega$. Throughout this
proof, $\cl$ will denote closure in $2^\omega$.

Let $Z=\cl(Q)\setminus Q$, and notice that $Z\approx\omega^\omega$. Assume, in
order to get a contradiction, that
$Z\setminus X$ contains a copy $N$ of $\omega^\omega$ that is closed in $Z$.
Since $\cl(Q)\approx 2^\omega$ and $Q$ is a countable dense subset of $\cl(Q)$,
Lemma \ref{closurebaire} shows that $Q'=\cl(N)\cap Q$ is crowded. This
contradicts the fact that $X$ is $\CB$ because $Q'=\cl(N)\cap X$ is also closed
in $X$. Since $Z\cap X$ is a Miller-measurable subset of $Z$, it follows that
$Z\cap X$ contains
a copy $N$ of $\omega^\omega$ that is closed in $Z$. Once again, Lemma
\ref{closurebaire} shows that $Q'=\cl(N)\cap Q$ is crowded. Furthermore, the
closure of $Q'$ in $X$ is compact because $Q'\subseteq \cl(N)=N\cup Q'\subseteq
X$. Therefore $X$ has the $\MP$.
\end{proof}

\begin{corollary}\label{ancbmp}
Let $X$ be a $\an$ space. If $X$ is $\CB$ then $X$ has the $\MP$.
\end{corollary}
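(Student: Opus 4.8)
The plan is to obtain this corollary as a direct instantiation of Theorem \ref{mmcbeqmp}. That theorem takes as input any projective pointclass $\bG$ with the property that every $\bG$ subset of $\omega^\omega$ is Miller-measurable, and concludes that every $\CB$ space of complexity $\bG$ has the $\MP$. I would simply apply it with $\bG=\an$. To do so I only need to verify the hypothesis on $\bG$, namely that every $\an$ subset of $\omega^\omega$ is Miller-measurable; but this is exactly the content of Corollary \ref{anmm} (which in turn rests on the Kechris--Saint Raymond dichotomy of Theorem \ref{spsp}).

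Concretely, the two things to observe are that $\an=\mathbf{\Sigma}^1_1$ is a projective pointclass (it is the first level of the projective hierarchy, so this is immediate) and that the Miller-measurability requirement is met by Corollary \ref{anmm}. With $\bG=\an$ fixed, an $\an$ space $X$ is by definition a $\bG$ space, so if $X$ is moreover $\CB$, then Theorem \ref{mmcbeqmp} applies without modification and yields that $X$ has the $\MP$.

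I do not expect any genuine obstacle at this step: all of the substantive work has already been carried out in the proof of Theorem \ref{mmcbeqmp} (the Lemma \ref{closurebaire} argument together with the reduction to a zero-dimensional, hence $2^\omega$-embeddable, situation) and in Corollary \ref{anmm}. The proof is therefore essentially a single line chaining these two results together.
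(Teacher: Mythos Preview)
Your proposal is correct and takes essentially the same approach as the paper: the corollary is obtained by applying Theorem~\ref{mmcbeqmp} with $\bG=\an$, the hypothesis being verified by Corollary~\ref{anmm}.
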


\section{Preliminaries about the constructible universe}

All the result in this section are well-known. The aim of this section is simply
to collect the main results needed to give rigorous
proofs of Proposition \ref{consbrendle} and Proposition \ref{consbernstein}.
However, we will assume some familiarity
with the basic theory of $\Ell$.
Our references will be \cite{kanamori} and \cite{kunen}.

\newpage

The following theorem essentially shows that if all the `ingredients' of a
construction by transfinite recursion are absolute,
then the end result will be absolute as well. It is obtained by combining
Theorem I.9.11 and the proof Theorem II.4.15 from \cite{kunen} in
the case $A=\omega_1$, $R=\,\in$. We will denote by $\OC$ the statement ``Every
ordinal is countable''.

\begin{theorem}\label{formaltransf}
Suppose $\varphi(x,s,y)$ is such that $\forall x,s\,\exists! y\,\varphi(x,s,y)$.
Define
$G(x,s)$ to be the unique $y$ such that $\varphi(x,s,y)$. Then there exists a
formula $\psi(x,y)$
such that the following are provable.
\begin{itemize}
\item $\forall x\,\exists! y\,\psi(x,y)$. (In particular, $\psi(x,y)$ defines a
function $F$,
where $F(x)$ is the unique $y$ such that $\psi(x,y)$ holds.)
\item $\forall \alpha< \omega_1\,
[F(\alpha)=G(\alpha,F\upharpoonright\alpha)]$.
\end{itemize}
Assume that $\Phi$ is a collection of sentences in the language $\LL_\in$ of set
theory such that $\ZFm+\OC\subseteq\Phi$.
If $M$ is a transitive model for $\Phi$ and $G$ is absolute for $M$,
then $F^M(\alpha)$ is defined for every $\alpha\in M\cap\omega_1$ and $F$ is
absolute for $M$.
\end{theorem}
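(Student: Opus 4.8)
The plan is to realize $F$ through the usual machinery of \emph{$G$-approximations} and then to track absoluteness ingredient by ingredient, in exactly the spirit of the transfinite recursion theorem and its absoluteness refinement cited from \cite{kunen}. Call a function $h$ a \emph{$G$-approximation} if $\dom(h)$ is an ordinal and $h(\xi)=G(\xi,h\upharpoonright\xi)$ for every $\xi\in\dom(h)$. I would let $\psi(x,y)$ assert: either $x$ is a countable ordinal and some $G$-approximation $h$ with $x\in\dom(h)$ satisfies $h(x)=y$, or else $x$ is not a countable ordinal and $y=\varnothing$. Since $\omega_1$ is precisely the class of countable ordinals, this is the instance $A=\omega_1$, $R=\,\in$ of the general recursion scheme.

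For the first (provability) half I would establish the two standard lemmas about approximations, both by transfinite induction. First, \emph{coherence}: any two $G$-approximations agree on the intersection of their domains, since a least point of disagreement $\xi_0$ would force $h_1\upharpoonright\xi_0=h_2\upharpoonright\xi_0$ and hence $h_1(\xi_0)=G(\xi_0,h_1\upharpoonright\xi_0)=h_2(\xi_0)$; this yields uniqueness. Second, \emph{existence}: for every $\alpha<\omega_1$ there is a $G$-approximation with domain $\alpha+1$, where at successor and limit stages one collects the (unique) shorter approximations into a set using Replacement and then extends by one value via $G$. Together these give $\forall x\,\exists! y\,\psi(x,y)$, so $\psi$ defines a total class function $F$, and reading off the top value of an approximation gives $F(\alpha)=G(\alpha,F\upharpoonright\alpha)$ for all $\alpha<\omega_1$. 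I would stress that only Replacement and Separation are used here, never Power Set, so the whole argument is available inside any model of $\ZFm$.

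For the absoluteness half, fix a transitive $M\models\Phi\supseteq\ZFm+\OC$ with $G$ absolute for $M$. The key observations are: (i) the predicate ``$h$ is a $G$-approximation'' is absolute for $M$, because its defining clause is a bounded assertion about the values of $G$ on elements of $M$, and these agree in $M$ and $V$; (ii) since $M\models\OC$, $M$ contains no uncountable ordinal, because a genuine bijection witnessing countability would exist in $V$, so $M\cap\mathrm{Ord}=M\cap\omega_1$, and moreover $M$ regards every one of its ordinals as a countable ordinal, so inside $M$ the guard ``$x$ is a countable ordinal'' collapses to the absolute condition ``$x$ is an ordinal''; (iii) running the existence lemma inside $M$, which is legitimate since $M\models\ZFm$, shows $M\models\forall x\,\exists! y\,\psi$, so $F^M$ is total and in particular $F^M(\alpha)$ is defined for every $\alpha\in M\cap\omega_1$. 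Finally I would compare the two computations: for $\alpha\in M\cap\omega_1=M\cap\mathrm{Ord}$ there is $h\in M$ that $M$ believes to be an approximation with domain $\alpha+1$; by (i) it really is one, so by coherence in $V$ it equals $F\upharpoonright(\alpha+1)$, whence $F^M(\alpha)=h(\alpha)=F(\alpha)$, while for non-ordinal $x\in M$ both computations return $\varnothing$. Thus $F$ is absolute for $M$.

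I expect the genuine work to lie entirely in the absoluteness bookkeeping rather than in the recursion itself. The delicate points are verifying that ``being a $G$-approximation'' transfers between $M$ and $V$ using \emph{only} the absoluteness of $G$, and pinning down precisely how $\OC$ is used, namely to force $M\cap\mathrm{Ord}=M\cap\omega_1$ and to make the ``countable ordinal'' guard in $\psi$ behave absolutely, so that all of $M$'s ordinals fall under the recursive clause rather than the default one. A secondary point worth checking carefully is that dropping Power Set does no harm: the limit step of the existence lemma needs Replacement to assemble the shorter approximations into a set, and this is exactly the axiom retained in $\ZFm$.
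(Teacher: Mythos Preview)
Your proposal is correct and follows essentially the same route as the paper: the paper does not give its own argument but simply points to Theorem I.9.11 and the proof of Theorem II.4.15 in \cite{kunen} specialized to $A=\omega_1$, $R=\,\in$, and what you have written is precisely that standard $G$-approximation argument spelled out, including the use of $\OC$ to make $M\cap\mathrm{Ord}=M\cap\omega_1$ and to collapse the ``countable ordinal'' guard inside $M$. One minor wording point: the clause ``$h$ is a $G$-approximation'' is not literally bounded, but your intended meaning---that it is absolute because its non-$\Delta_0$ content is exactly instances of $\varphi$, whose absoluteness is assumed---is correct.
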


For the proofs of the following three results, see Theorem II.6.22, Theorem
II.5.10 and Lemma II.6.16 in \cite{kunen}.
\begin{proposition}\label{Lregunc}
If $\kappa$ is a regular uncountable cardinal then $\Ell_\kappa\vDash\ZFm$.
\end{proposition}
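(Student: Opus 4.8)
The plan is to verify each axiom of $\ZFm$ in $\Ell_\kappa$ by exploiting the standard structure of the constructible hierarchy. First I would recall the basic facts: each $\Ell_\alpha$ is transitive, the hierarchy is cumulative with $\Ell_{\alpha+1}=\mathrm{Def}(\Ell_\alpha)$ (the subsets of $\Ell_\alpha$ definable over $(\Ell_\alpha,\in)$ with parameters), $\Ell_\lambda=\bigcup_{\alpha<\lambda}\Ell_\alpha$ at limits, and $|\Ell_\alpha|=|\alpha|$ for infinite $\alpha$. Since a regular uncountable cardinal $\kappa$ is in particular a limit ordinal, $\Ell_\kappa=\bigcup_{\alpha<\kappa}\Ell_\alpha$, and every $x\in\Ell_\kappa$ already belongs to some $\Ell_\alpha$ with $\alpha<\kappa$.

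The ``finitary'' axioms are then immediate. Extensionality and Foundation hold because $\Ell_\kappa$ is transitive. For Pairing and Union, given $x,y\in\Ell_\alpha$ with $\alpha<\kappa$, both $\{x,y\}$ and $\bigcup x$ are definable over $\Ell_\alpha$ (using transitivity for Union), hence lie in $\Ell_{\alpha+1}\subseteq\Ell_\kappa$ because $\alpha+1<\kappa$. Infinity holds because $\omega\in\Ell_{\omega+1}\subseteq\Ell_\kappa$, using $\omega<\kappa$.

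The two axiom schemes require the Reflection Theorem for the $\Ell$-hierarchy. For Separation, given a formula $\varphi$, parameters in $\Ell_\kappa$, and $x\in\Ell_\kappa$, I would choose $\alpha<\kappa$ large enough that $x$ and all the parameters lie in $\Ell_\alpha$ and $\Ell_\alpha$ reflects $\varphi$ from $\Ell_\kappa$; then $\{z\in x : \Ell_\kappa\vDash\varphi(z)\}=\{z\in x : \Ell_\alpha\vDash\varphi(z)\}$ is definable over $\Ell_\alpha$, so it belongs to $\Ell_{\alpha+1}\subseteq\Ell_\kappa$. For Replacement---the crux of the argument, and the one place where regularity is indispensable---suppose $\varphi$ defines a function $F$ on some $x\in\Ell_\alpha$ with $\alpha<\kappa$. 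Each value $F(z)$ lies in some $\Ell_{\beta_z}$ with $\beta_z<\kappa$. Since $|x|\le|\Ell_\alpha|=|\alpha|<\kappa$, the set $\{\beta_z : z\in x\}$ has size $<\kappa$, so by regularity of $\kappa$ its supremum $\beta$ is still $<\kappa$; thus the entire image sits inside $\Ell_\beta$. A final application of reflection and Separation over a suitable level below $\kappa$ then realizes the image set itself as an element of some $\Ell_{\gamma+1}\subseteq\Ell_\kappa$.

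The main obstacle is precisely this Replacement step, where one must simultaneously bound the ranks of the images and exhibit the image as an actual element of $\Ell_\kappa$. Uncountability of $\kappa$ is what keeps every initial level small (so that $|\Ell_\alpha|<\kappa$ for $\alpha<\kappa$), and regularity is exactly the hypothesis that converts a family of fewer than $\kappa$ ordinals below $\kappa$ into a bounded one; without regularity the supremum could reach $\kappa$ and the argument would break down. Everything else reduces to the closure of the constructible hierarchy under definability together with the fact that $\kappa$ is a limit ordinal.
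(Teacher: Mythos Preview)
Your sketch is correct and is essentially the standard textbook argument; the paper does not supply its own proof but simply cites Kunen (Theorem II.6.22), where exactly this verification of the axioms---with regularity used to bound the ranks of images in the Replacement step---is carried out. One minor quibble: it is not uncountability per se that ensures $|\Ell_\alpha|<\kappa$ for $\alpha<\kappa$, but rather that $\kappa$ is a cardinal (which is already implied by ``regular cardinal''); uncountability is what guarantees $\omega<\kappa$ and hence Infinity.
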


\begin{proposition}\label{manydeltas}
There exist arbitrarily large $\delta< \omega_1$ such that
$\Ell_\delta\prec\Ell_{\omega_1}$.
\end{proposition}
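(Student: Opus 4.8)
The plan is to exhibit the desired ordinals $\delta$ as \emph{closure points} of a suitable function on $\omega_1$, and to verify $\Ell_\delta\prec\Ell_{\omega_1}$ directly via the Tarski--Vaught criterion rather than by collapsing an abstract elementary submodel. The starting observation is that $\Ell_{\omega_1}\vDash\VL$, since $\omega_1$ is a limit ordinal; consequently $\Ell_{\omega_1}$ carries its canonical definable well-ordering $<_{\Ell}$, and hence definable Skolem functions $h_\varphi$ (for each formula $\varphi$, let $h_\varphi(\bar a)$ be the $<_{\Ell}$-least witness to $\exists x\,\varphi(x,\bar a)$, when one exists). By the Tarski--Vaught test it then suffices to produce arbitrarily large $\delta<\omega_1$ for which $\Ell_\delta$ is closed under every $h_\varphi$.

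First I would bound the Skolem functions. Fix $\alpha<\omega_1$. Since $\alpha<\omega_1$ the level $\Ell_\alpha$ is countable, and there are only countably many formulas, so the set of Skolem values $\{h_\varphi(\bar a):\varphi\text{ a formula},\ \bar a\in\Ell_\alpha\}$ is a countable subset of $\Ell_{\omega_1}$; let $g(\alpha)<\omega_1$ be the least ordinal exceeding $\alpha$ with all these values lying in $\Ell_{g(\alpha)}$. The point of $g$ is that whenever $\delta$ is a limit ordinal closed under $g$ (meaning $g(\alpha)<\delta$ for all $\alpha<\delta$), the level $\Ell_\delta$ is closed under every Skolem function: any tuple $\bar a\in\Ell_\delta$ already lies in some $\Ell_\alpha$ with $\alpha<\delta$, whence $h_\varphi(\bar a)\in\Ell_{g(\alpha)}\subseteq\Ell_\delta$. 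By Tarski--Vaught this yields $\Ell_\delta\prec\Ell_{\omega_1}$.

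It remains to produce arbitrarily large closure points, which rests on the standard fact that the closure points of a function $g:\omega_1\to\omega_1$ form a club in $\omega_1$, and are in particular unbounded. Concretely, given any $\beta_0<\omega_1$, recursively set $\beta_{n+1}=\sup\{g(\gamma):\gamma<\beta_n\}+1$; each $\beta_{n+1}$ is a countable supremum of ordinals below $\omega_1$, hence itself below $\omega_1$, and $\delta=\sup_n\beta_n$ is then a limit ordinal above $\beta_0$ that is closed under $g$. Letting $\beta_0$ vary over $\omega_1$ gives the required unboundedness.

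The one genuinely delicate point --- and the reason I avoid the tempting route of taking a countable $M\prec\Ell_{\omega_1}$ and collapsing it --- is that condensation only delivers a model \emph{isomorphic} to some $\Ell_\gamma$, which a priori yields merely $\Ell_\gamma\equiv\Ell_{\omega_1}$ and not $\Ell_\gamma\prec\Ell_{\omega_1}$, since the collapse may move the parameters. Working with genuine initial segments $\Ell_\delta$ and checking Tarski--Vaught directly sidesteps this entirely. If one did prefer the collapsing argument, the extra work would be to arrange $M\cap\omega_1$ to be an ordinal and then to argue, using $\VL$ together with the fact that every element of $M$ is $<_{\Ell}$-definable from an ordinal parameter below $\delta$, that the transitive collapse is in fact the identity.
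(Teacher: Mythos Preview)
Your argument is correct. Note, however, that the paper does not supply its own proof of this proposition: it is one of three results for which the authors simply refer the reader to Kunen (specifically Theorem II.5.10 in \cite{kunen}), so there is no in-paper argument to compare against. The approach you take---choosing Skolem functions for $\Ell_{\omega_1}$ via the canonical well-order $<_\Ell$, bounding them by a function $g:\omega_1\to\omega_1$, and verifying the Tarski--Vaught test at any limit closure point of $g$---is the standard one and is essentially what one finds in the cited reference. Your closing paragraph on the pitfalls of the ``collapse a countable $M\prec\Ell_{\omega_1}$'' route is accurate (the Mostowski collapse need not be the identity, so without further work one obtains only $\Ell_\gamma\equiv\Ell_{\omega_1}$ rather than $\Ell_\gamma\prec\Ell_{\omega_1}$), though it is commentary rather than part of the proof proper; the direct verification you give is indeed the cleaner path.
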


\begin{proposition}\label{mustbeL}
Let $M$ be a transitive set such that $M\vDash\ZFm$, and let $\delta$ be the
least ordinal such that $\delta\notin M$. Then $M\vDash\VL$ if and only if
$M=\Ell_\delta$.
\end{proposition}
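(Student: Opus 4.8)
The plan is to reduce both directions to the single absoluteness statement that $\Ell_\alpha^M=\Ell_\alpha$ for every ordinal $\alpha\in M$. First I would record that, since $M$ is transitive, the ordinals of $M$ form an initial segment of $\mathrm{Ord}$, and as $M$ is a set this segment is exactly the ordinal $\delta=M\cap\mathrm{Ord}$, in agreement with the definition of $\delta$ in the statement. The heart of the matter is then the claim that the map $F\colon\alpha\mapsto\Ell_\alpha$ is absolute for $M$, so that $F^M(\alpha)$ is defined and equals $\Ell_\alpha$ for every $\alpha<\delta$. This is the standard absoluteness of the constructible hierarchy for transitive models of $\ZFm$, and it is precisely an instance of the ``absolute steps yield an absolute recursion'' principle embodied in Theorem \ref{formaltransf}: writing $F(\alpha)=G(\alpha,F\upharpoonright\alpha)$ with $G$ taking unions at limit stages and applying the definable-power-set operation at successor stages, one only needs $G$ to be absolute for $M$. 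Because $M\vDash\ZFm$, this reduces to the absoluteness of the definable-power-set operation, which rests in turn on the absoluteness of the satisfaction relation for set-sized structures. Verifying this is the one genuinely delicate point of the argument, and it is exactly the sort of hypothesis that Theorem \ref{formaltransf} is designed to consume.

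Granting $\Ell_\alpha^M=\Ell_\alpha$ for all $\alpha<\delta$, the forward implication is short. Assume $M\vDash\VL$ and fix $x\in M$. The sentence $\VL$ provides some $\alpha\in M\cap\mathrm{Ord}=\delta$ with $M\vDash x\in\Ell_\alpha^M$, so $x\in\Ell_\alpha\subseteq\Ell_\delta$ by absoluteness; hence $M\subseteq\Ell_\delta$. Conversely, for each $\alpha<\delta$ we have $\Ell_\alpha=\Ell_\alpha^M\in M$, whence $\Ell_\alpha\subseteq M$ by transitivity of $M$, and therefore $\Ell_\delta=\bigcup_{\alpha<\delta}\Ell_\alpha\subseteq M$. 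The two inclusions give $M=\Ell_\delta$.

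For the reverse implication, I would assume $M=\Ell_\delta$, so in particular $\Ell_\delta\vDash\ZFm$ and the absoluteness above applies. Every $x\in\Ell_\delta$ lies in $\Ell_\alpha$ for some $\alpha<\delta$, and $\Ell_\alpha=\Ell_\alpha^M$, so $M\vDash x\in\Ell_\alpha$; thus $M\vDash\forall x\,\exists\alpha\,(x\in\Ell_\alpha)$, which is exactly $\VL$. As indicated, the main obstacle throughout is isolating and verifying the absoluteness of the step operation $G$ (equivalently, of the definable-power-set operation); once that is in hand, everything else is routine bookkeeping with transitivity and the definition of $\delta$.
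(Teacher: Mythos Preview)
The paper does not give its own proof of this proposition; it simply refers the reader to Lemma~II.6.16 in Kunen's \emph{Set Theory}. Your outline is essentially the standard argument one finds there: the heart of the matter is the absoluteness of the constructible hierarchy ($\Ell_\alpha^M=\Ell_\alpha$ for every $\alpha<\delta$), and once that is secured both directions are routine, exactly as you wrote them.

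One caution concerning your appeal to Theorem~\ref{formaltransf}. As stated in this paper, that theorem is a deliberately specialized instance of the general recursion-absoluteness principle (the paper says explicitly that it is the case $A=\omega_1$, $R={\in}$ of Kunen's Theorem~II.4.15): it assumes $\OC\subseteq\Phi$ and only delivers absoluteness of $F$ for $\alpha\in M\cap\omega_1$. Proposition~\ref{mustbeL} does not assume $M\vDash\OC$, and $\delta$ need not lie below $\omega_1$, so Theorem~\ref{formaltransf} does not literally apply. What you actually need is the unrestricted recursion-absoluteness theorem together with the absoluteness of the definable-power-set operator---precisely the ingredients that underlie Theorem~\ref{formaltransf}, but in their general form rather than the $\omega_1$-tailored version recorded in this paper. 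Your diagnosis of the one delicate step (absoluteness of $\mathrm{Def}$, hence of $G$) is correct, and with that granted the rest of your argument is sound.
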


Next, we recall some notation from the section of \cite{kanamori} entitled
``Regularity properties in $\Ell$'' (which begins on page 167).
Let $E_z=\{\langle m,n\rangle\in\omega\times\omega:x(\langle\langle
m,n\rangle\rangle)=0\}$ for $z\in\omega^\omega$, where $\langle\langle
m,n\rangle\rangle=2^m\cdot 3^n$.
Let $M_z=\langle\omega, E_z\rangle$ be the structure with domain $\omega$ which
interprets $\in$ as the binary relation $E_z$.
Whenever $M_z$ is well-founded and extensional, denote by $\tr(M_z)$ the
transitive collapse of $M_z$,
and let $\pi_z:M_z\longrightarrow \tr(M_z)$ the corresponding isomorphism.

For the proofs of the following three results, see Proposition 13.8 in
\cite{kanamori}.

\begin{proposition}\label{satisfactionsingle}
Let $\varphi(x)$ be a formula in the language $\LL_\in$ of set theory. Then
$\{\langle n,z\rangle\in\omega\times\omega^\omega:M_z\vDash\varphi(n)\}$ is a
Borel set.
\end{proposition}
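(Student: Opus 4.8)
The plan is to prove, by induction on the structure of $\varphi$, the following stronger statement: for every formula $\varphi(v_0,\ldots,v_{k-1})$ of $\LL_\in$ whose free variables are among $v_0,\ldots,v_{k-1}$, the set
$$
A_\varphi=\{\langle n_0,\ldots,n_{k-1},z\rangle\in\omega^k\times\omega^\omega:M_z\vDash\varphi(n_0,\ldots,n_{k-1})\}
$$
is Borel in the Polish space $\omega^k\times\omega^\omega$; the proposition is then the case $k=1$. Strengthening the statement this way is forced on us, since the quantifier step of the induction alters the number of free variables. I would fix a standard enumeration $v_0,v_1,\ldots$ of the variables and, writing $\bar n$ for $\langle n_0,\ldots,n_{k-1}\rangle$, observe that passing from a formula to the same formula with an additional dummy free variable corresponds to taking a preimage under a coordinate projection, and hence preserves Borelness; this lets me regard any two subformulas as living in a common list of free variables before combining them.

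For the base case I would handle the atomic formulas directly. If $\varphi$ is $v_i\in v_j$, then the evaluation map $\langle\bar n,z\rangle\mapsto z(\langle\langle n_i,n_j\rangle\rangle)$ is continuous from $\omega^k\times\omega^\omega$ into the discrete space $\omega$, so $A_\varphi$ is the clopen preimage of $\{0\}$. If $\varphi$ is $v_i=v_j$, then $A_\varphi=\{\langle\bar n,z\rangle:n_i=n_j\}$ is clopen, since equality is interpreted as genuine equality on the domain $\omega$ of $M_z$. Here I should stress that nothing is required about $M_z$ being well-founded or extensional: the Tarskian satisfaction relation is defined for every structure $M_z$. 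The Boolean steps are then immediate, as $A_{\neg\psi}$ is the complement of $A_\psi$ and $A_{\psi\wedge\chi}=A_\psi\cap A_\chi$, both Borel by the inductive hypothesis.

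The heart of the argument, and the only place where the specific setup matters, is the quantifier step, which relies on the domain of every $M_z$ being the fixed countable set $\omega$. For $\varphi$ of the form $\exists v_k\,\psi(v_0,\ldots,v_k)$ I would write
$$
A_\varphi=\bigcup_{m\in\omega}\{\langle\bar n,z\rangle:M_z\vDash\psi(\bar n,m)\},
$$
noting that each term of the union is the preimage of the Borel set $A_\psi\subseteq\omega^{k+1}\times\omega^\omega$ under the continuous map $\langle\bar n,z\rangle\mapsto\langle\bar n,m,z\rangle$, so that $A_\varphi$ is a countable union of Borel sets; dually, $\forall v_k\,\psi$ gives a countable intersection. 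The point I expect to be the main (though mild) obstacle is exactly this: because quantifiers range over $\omega$ rather than over an uncountable or $z$-dependent domain, they become countable Boolean operations rather than projections, keeping us inside the Borel sets instead of forcing us up to the analytic ones. Once this is in place, together with the free-variable bookkeeping, the induction closes and the proposition follows as the instance $k=1$.
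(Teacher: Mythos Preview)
Your argument is correct and is the standard induction on formula complexity; the key observation---that quantifiers over the fixed countable domain $\omega$ become countable unions and intersections rather than genuine projections---is exactly what keeps the computation Borel. The paper itself does not give a proof of this proposition but simply refers the reader to Proposition~13.8 in Kanamori, whose argument is essentially the one you have written out.
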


\begin{proposition}\label{satisfactionmany}
Let $\Phi$ be a collection of sentences in the language $\LL_\in$ of set theory.
Then $\{z\in\omega^\omega:M_z\vDash\Phi\}$ is a Borel set.
\end{proposition}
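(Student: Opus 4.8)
The plan is to reduce Proposition~\ref{satisfactionmany} to the single-formula case, Proposition~\ref{satisfactionsingle}, by exploiting the fact that $\LL_\in$ is a countable language. The key structural observation, which I would state at the outset, is that $\LL_\in$ has only finitely many non-variable symbols together with a countable supply of variables, and hence admits only countably many formulas. In particular, any collection $\Phi$ of $\LL_\in$-sentences is a subset of a fixed countable set, and is therefore itself countable. This is precisely what will allow us to replace an a priori arbitrary intersection by a countable one.

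Next I would verify that for a single sentence $\varphi$ the set $B_\varphi=\{z\in\omega^\omega:M_z\vDash\varphi\}$ is Borel, obtaining this directly from Proposition~\ref{satisfactionsingle}. Since $\varphi$ is a sentence, I regard it as a formula $\varphi(x)$ in which the variable $x$ simply does not occur free; then the satisfaction of $\varphi(n)$ in $M_z$ does not depend on $n$, so that $\{\langle n,z\rangle\in\omega\times\omega^\omega:M_z\vDash\varphi(n)\}$ equals $\omega\times B_\varphi$. By Proposition~\ref{satisfactionsingle} this set is Borel, and taking its section at the fixed first coordinate $n=0$ shows that $B_\varphi$ is Borel, a section of a Borel set along a fixed coordinate being Borel.

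Finally, since $M_z\vDash\Phi$ means exactly that $M_z\vDash\varphi$ for every $\varphi\in\Phi$, I would conclude by writing $\{z\in\omega^\omega:M_z\vDash\Phi\}=\bigcap_{\varphi\in\Phi}B_\varphi$, which by the first observation is a \emph{countable} intersection of Borel sets and hence Borel. I do not anticipate any serious obstacle here; the only point that genuinely requires care is the countability of the language, since without it one would be facing an uncountable intersection of Borel sets, which need not be Borel. Thus the entire content of the proposition rests on combining Proposition~\ref{satisfactionsingle} with the fact that a set of sentences in a countable language is automatically countable.
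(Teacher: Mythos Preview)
Your argument is correct. The paper does not supply its own proof of this proposition but refers the reader to Proposition 13.8 in \cite{kanamori}; your reduction to Proposition~\ref{satisfactionsingle} via the countability of $\LL_\in$ and a countable intersection is exactly the standard argument one finds there.
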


\begin{proposition}\label{collapse}
Given $z\in\omega^\omega$ such that $M_z$ is well-founded and extensional,
define $R(z)=\{\langle n,x\rangle\in\omega\times\omega^\omega:\pi_z(n)=x\}$.
Then there exists a Borel set
$A\subseteq\omega\times\omega^\omega\times\omega^\omega$ such that $\langle
n,x\rangle\in
R(z)\leftrightarrow \langle n,x,z\rangle\in A$ for every $z\in\omega^\omega$
such that $M_z$
is well-founded and extensional.
\end{proposition}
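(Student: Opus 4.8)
The plan is to exploit the fact that $x$ is required to be an element of $\omega^\omega$: regarding a real $x$ as the set $\{\langle i,x(i)\rangle:i\in\omega\}$ of Kuratowski pairs, every member of $x$ is a \emph{hereditarily finite} set. Consequently, if $M_z$ is well-founded and extensional and $\pi_z(n)=x$, then the elements $\pi_z(m)$ with $m\mathrel{E_z}n$ are exactly these finite-rank pairs. This means the equation $\pi_z(n)=x$ never refers to the transitive collapse above finite rank, which will let me describe $A$ by recursing on the (genuinely well-founded) codes of the target hereditarily finite sets rather than on the relation $E_z$. Recursing on the target code, rather than on $E_z$, is exactly what dissolves the only real difficulty here: for an arbitrary $z$ the relation $E_z$ need not be well-founded, so a recursion along $E_z$ would be undefined off the set of good $z$, and the resulting $\an$ and $\coan$ descriptions would disagree there; recursing on the target code keeps the recursion total and deterministic for \emph{every} $z$.

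To make this precise I would fix a recursive coding $s\mapsto\lceil s\rceil\in\omega$ of the hereditarily finite sets and define, by recursion on $e$, a predicate $C(z,m,e)$ meant to say ``$\pi_z(m)$ is the hereditarily finite set coded by $e$''. Writing that set as $\{s_0,\dots,s_{k-1}\}$ with codes $e_0,\dots,e_{k-1}<e$, declare $C(z,m,e)$ to hold iff every $E_z$-predecessor $m'$ of $m$ satisfies $C(z,m',e_l)$ for some $l<k$, and every $l<k$ is realized by some $E_z$-predecessor of $m$. The key observation is that this recursion runs on $e$, and the relation among codes given by hereditary membership is genuinely well-founded; hence for every $z$ (well-founded or not) there is exactly one total table $g\colon\omega\times\omega\to 2$ obeying this clause at every pair, built by recursion and unique by induction on $e$. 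Therefore $C(z,m,e)$ admits both an $\an$ form, ``$\exists g\,[g$ obeys the clause and $g(m,e)=1]$'', and a $\coan$ form, ``$\forall g\,[$if $g$ obeys the clause then $g(m,e)=1]$'', and since the table exists and is unique for all $z$ these coincide everywhere; the clause ``$g$ obeys the recursion'' is arithmetical in $(g,z)$. Thus $C$ is $\mathbf{\Delta}^1_1$, hence Borel, with no restriction on $z$. A routine induction on $e$, using $\pi_z(m)=\{\pi_z(m'):m'\mathrel{E_z}m\}$ together with well-foundedness and extensionality, then shows that whenever $M_z$ is well-founded and extensional one has $C(z,m,e)$ iff $\pi_z(m)$ equals the set coded by $e$.

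Finally I would define $A$ by putting $\langle n,x,z\rangle\in A$ exactly when (i) for every $m$ with $m\mathrel{E_z}n$ there is $i\in\omega$ with $C(z,m,\lceil\langle i,x(i)\rangle\rceil)$, and (ii) for every $i\in\omega$ there is $m$ with $m\mathrel{E_z}n$ and $C(z,m,\lceil\langle i,x(i)\rangle\rceil)$, where $\lceil\langle i,j\rangle\rceil$ is a fixed recursive function of $i$ and $j$. The only quantifiers adjoined to the Borel predicate $C$ range over $\omega$, so $A$ is Borel. For $z$ with $M_z$ well-founded and extensional, the previous paragraph turns (i) and (ii) into the statement $\{\pi_z(m):m\mathrel{E_z}n\}=\{\langle i,x(i)\rangle:i\in\omega\}$, that is $\pi_z(n)=x$; hence $A$ agrees with $R(z)$ on every such $z$, which is all that is required, its behaviour on the remaining $z$ being irrelevant. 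The step I expect to be delicate is not the final verification but the realization, flagged above, that recursing on the target code rather than on $E_z$ is what yields a single Borel set instead of an $\an$ and a $\coan$ set that merely agree on the well-founded part; once this is in place, the restriction to $x\in\omega^\omega$ does all the work, whereas the full collapse graph (with $\pi_z(n)$ an arbitrary set) would force a recursion of transfinite length along $E_z$ and only a $\mathbf{\Delta}^1_1$ description via uniqueness of the collapse on the well-founded $z$.
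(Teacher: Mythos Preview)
The paper does not supply its own proof of this proposition; it simply refers the reader to Proposition~13.8 in \cite{kanamori}. So there is no proof in the paper to compare your argument against.

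Your argument is correct. The key observation---that because $x\in\omega^\omega$ is identified with its graph $\{\langle i,x(i)\rangle:i\in\omega\}$, every element of $\pi_z(n)$ must be hereditarily finite---is exactly what lets you run the recursion on the target code $e$ rather than along $E_z$. This recursion is a genuine $\omega$-recursion (with the Ackermann-style coding you implicitly use, the codes of the elements of a set are strictly below the code of the set), so the solution table $g$ exists and is unique for \emph{every} $z$; hence the $\an$ and $\coan$ descriptions of $C$ agree globally and $C$ is Borel. The induction showing $C(z,m,e)\leftrightarrow\pi_z(m)=\text{(set coded by }e)$ for well-founded extensional $z$ is routine, and your definition of $A$ from $C$ adds only number quantifiers. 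All of this is sound.

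By contrast, the standard argument in Kanamori is not tailored to the hypothesis $x\in\omega^\omega$: it treats the collapse map in general and obtains Borelness via a uniqueness argument for the collapse that only makes sense on the well-founded extensional $z$'s (so the $\an$ and $\coan$ forms coincide just on that set, which is all the proposition requires). Your approach trades generality for a cleaner, fully total recursion; it is a genuine simplification in the present restricted setting, and your discussion of why recursing on $E_z$ would be problematic is apt.
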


\section{Consistent definable counterexamples}

For our first counterexample, we will employ a classical theorem of
Martin and Solovay (see Theorem \ref{martinsolovay}), of which we will give a
new proof in Section 8.

\begin{proposition}\label{conslambda}
Assume $\MA + \neg\CH + \omega_1=\omega_1^\Ell$. Then there exists a $\an$ space
that has the $\MP$ but is not Polish.
\end{proposition}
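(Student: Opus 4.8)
The plan is to reduce everything to Proposition \ref{zfclambda} by producing an uncountable \(\lambda'\)-set \(Y \subseteq 2^\omega\) that is moreover \(\coan\). Granting such a \(Y\), the space \(X = 2^\omega \setminus Y\) will be \(\an\) (its complement \(Y\) being \(\coan\)), and Proposition \ref{zfclambda} will immediately give that \(X\) has the \(\MP\) but is not Polish. Thus the entire argument comes down to constructing \(Y\), and each of the three hypotheses is responsible for one feature of it: \(\omega_1 = \omega_1^\Ell\) for the complexity and size, \(\neg\CH\) for the cardinal arithmetic, and \(\MA\) (through Martin--Solovay) for the \(\lambda'\) property.

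First I would invoke the classical fact that \(\omega_1 = \omega_1^\Ell\) yields an uncountable \(\coan\) set \(Y \subseteq 2^\omega\) with no perfect subset; the standard witness is the largest thin \(\coan\) set, whose cardinality equals \(\aleph_1\) precisely when \(\omega_1 = \omega_1^\Ell\). Since a thin \(\coan\) set is in particular a thin \(\mathbf{\Sigma}^1_2\) set, it is contained in \(\Ell\) and therefore has size at most \(\aleph_1\) in \(\ZFC\); being uncountable, \(Y\) has size exactly \(\aleph_1\). Here \(\neg\CH\) enters, guaranteeing \(\aleph_1 < \cccc\), so that \(Y\) is a set of reals of size strictly below the continuum.

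Next I would feed \(Y\) into the Martin--Solovay theorem (Theorem \ref{martinsolovay}), which under \(\MA + \neg\CH\) tells us that a set of reals of size less than \(\cccc\) is a \(\lambda'\)-set. (If Theorem \ref{martinsolovay} is instead phrased via \(Q\)-sets, the \(\lambda'\) property follows routinely: for any countable \(C \subseteq 2^\omega\) the union \(Y \cup C\) again has size \(\aleph_1 < \cccc\), hence is a \(Q\)-set, in particular a \(\lambda\)-set, which is exactly what the definition of \(\lambda'\)-set requires.) Applying Proposition \ref{zfclambda} to this uncountable \(\lambda'\)-set \(Y\) then completes the argument.

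The step I expect to be most delicate is the complexity bookkeeping in the second paragraph: the conclusion demands an \(\an\) space, which forces \(Y\) to be genuinely \(\coan\) and of size exactly \(\aleph_1\), and this in turn forces the use of a \emph{thin} \(\coan\) set rather than, say, the \(\mathbf{\Sigma}^1_2\) set of all constructible reals (whose complement would only be \(\mathbf{\Pi}^1_2\), too coarse for the claim). Everything deeper is black-boxed here: the genuinely hard content lives in Theorem \ref{martinsolovay}, whose new topological proof is deferred to Section 8.
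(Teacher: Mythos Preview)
Your overall strategy is sound, but it inverts the paper's argument and, in doing so, misidentifies what Theorem \ref{martinsolovay} actually states. In this paper, Theorem \ref{martinsolovay} is \emph{not} the statement that under $\MA+\neg\CH$ every set of reals of size $<\cccc$ is a $\lambda'$-set; rather, it asserts that under $\MA+\neg\CH+\omega_1=\omega_1^\Ell$ every space of size $\omega_1$ is $\coan$. Accordingly, the paper's proof runs opposite to yours: it begins with an \emph{arbitrary} $\lambda'$-set $Y\subseteq 2^\omega$ of size $\omega_1$ (a $\ZFC$ object), invokes Theorem \ref{martinsolovay} to conclude that $Y$ is $\coan$, and then applies Proposition \ref{zfclambda}. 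Your route---first produce a $\coan$ set $Y$ of size $\omega_1$ (via the thin $\coan$ set, or equivalently Proposition \ref{existscoan}), then use the classical $\MA$ consequence that sets of size $<\cccc$ are $\lambda'$-sets---is perfectly valid, but the result you are really invoking in your second step is not Theorem \ref{martinsolovay} as formulated here. In effect you have unfolded one ingredient of the paper's proof of Theorem \ref{martinsolovay} (the existence of a $\coan$ set of size $\omega_1$) and replaced the Baldwin--Beaudoin homogeneity step by the more elementary $\MA$ fact about $\lambda'$-sets; this buys you a shorter path to Proposition \ref{conslambda} specifically, at the cost of not actually using (or needing) the paper's Theorem \ref{martinsolovay}. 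Your final paragraph's worry about needing a \emph{thin} $\coan$ set is an artifact of your approach; in the paper's argument no such care is required, since \emph{every} set of size $\omega_1$ becomes $\coan$.
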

\begin{proof}
Let $Y\subseteq 2^\omega$ be a $\lambda'$-set of size $\omega_1$. The space
$X=2^\omega\setminus Y$ has the $\MP$ but is not Polish by Proposition
\ref{zfclambda}, and it is $\an$ by Theorem \ref{martinsolovay}.
\end{proof}

The proof of the following Proposition was inspired by the exposition in
\cite{khomskii} (in particular, by Fact 1.2.11 and Fact 1.3.8).
Next, we will introduce some terminology that will be needed in its proof.
Let $D=\{x\in 2^\omega: \exists n\in\omega\,\forall m\geq n\, (x(m)=0)\}$. We
will identify
$\omega^\omega$ with the subspace $2^\omega\setminus D$ of $2^\omega$. For any
given $T\subseteq 2^{<\omega}$,
let $[T]=\{x\in2^\omega: \forall n\in\omega\, (x\upharpoonright n\in T)\}$ be
the set of branches through $T$.
We will say that $C\subseteq 2^{<\omega}$ is a \emph{code for a copy of
$2^\omega$ in $\omega^\omega$}
if $[C]$ is crowded and $[C]\cap D=\varnothing$. In this case, one sees that
$[C]\subseteq 2^\omega\setminus D$ is in fact a copy of $2^\omega$, and that
every such copy can be obtained this way.
We will say that $B\subseteq 2^{<\omega}$ is a \emph{code for a closed copy of
$\omega^\omega$ in $\omega^\omega$} if $[B]$ is crowded and $[B]\cap D$ is dense
in $[B]$.
In this case, one sees that
$[B]\cap (2^\omega\setminus D)$ is in fact a closed copy of $\omega^\omega$, and
that every such copy can be obtained this way (see the proof of Lemma
\ref{closurebaire}). It is easy to check that both notions, as well as $x\in
[T]$, are absolute for
transitive models of $\ZFm$.
\begin{proposition}\label{consbrendle}
Assume $\VL$. Then there exists a $\del$ space that has the $\CBP$ but does not
have the $\MP$.
\end{proposition}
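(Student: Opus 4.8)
The plan is to reprove the existence of a set $Y\subseteq\omega^\omega$ as in Proposition \ref{zfcbrendle}, but to carry out the construction so carefully under $\VL$ that the complement $P=\omega^\omega\setminus Y$ is $\del$. Identifying $\omega^\omega$ with $2^\omega\setminus D$ as in the terminology above, one checks that the resulting $X=Y\cup D$ equals $2^\omega\setminus P$, so $X$ will be a $\del$ space as soon as $P$ is $\del$; and since $Y$ will satisfy conditions (\ref{sacksnull}) and (\ref{millernull}), Proposition \ref{zfcbrendle} gives that $X$ has the $\CBP$ but not the $\MP$. It is convenient to build $P$ directly and to reformulate the requirements as: (1$'$) every copy of $2^\omega$ in $\omega^\omega$ has a subcopy disjoint from $P$ (this is (\ref{sacksnull}) for $Y$); and (2$'$) $P$ meets every closed copy of $\omega^\omega$ contained in one fixed closed copy $N$ of $\omega^\omega$ (this is (\ref{millernull})). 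Fix $N$ through a recursive code $B^*$.

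For the combinatorial core, work under $\VL$, so that $\cccc=\omega_1$, and let $<_\Ell$ be the canonical wellordering. Using $<_\Ell$, enumerate as $\langle K_\alpha:\alpha<\omega_1\rangle$ all codes for copies of $2^\omega$ in $\omega^\omega$, and as $\langle N_\alpha:\alpha<\omega_1\rangle$ all codes for closed copies of $\omega^\omega$ in $\omega^\omega$ whose branch set is contained in $N$. Build by recursion, at stage $\alpha$ doing two things. First, reserve a copy $R_\alpha\subseteq K_\alpha$ of $2^\omega$ disjoint from the countably many points already placed in $P$ (possible, since a copy of $2^\omega$ minus countably many points still contains a copy of $2^\omega$). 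Second, note that each $R_\beta$ is compact while $N_\alpha\approx\omega^\omega$ is nowhere locally compact, so $\bigcup_{\beta\le\alpha}(R_\beta\cap N_\alpha)$ is meager in $N_\alpha$; by the Baire category theorem we may then choose $p_\alpha\in N_\alpha$ lying outside $\bigcup_{\beta\le\alpha}R_\beta$ and also outside the countable set $\Ell_{\alpha+1}$, and place $p_\alpha$ in $P$. Setting $P=\{p_\alpha:\alpha<\omega_1\}$, the reservation step keeps each $R_\alpha$ disjoint from $P$, giving (1$'$), and $p_\alpha\in N_\alpha\cap P$ gives (2$'$). The extra demand $p_\alpha\notin\Ell_{\alpha+1}$ — legitimate because $\Ell_{\alpha+1}$ is countable — forces the $\Ell$-rank of $p_\alpha$ to exceed $\alpha$; this is the seed that will make the construction local.

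To obtain the $\del$ bound, make \emph{all} choices $<_\Ell$-least with the stated properties, so that the whole recursion is an absolute function $G(\alpha,F\restriction\alpha)$: the code notions and ``$x\in[T]$'' are absolute for transitive models of $\ZFm$, as noted before the statement, and so are $<_\Ell$ and the countably many Baire category requirements. By Theorem \ref{formaltransf} there is then a global function $F$ with $F(\alpha)=G(\alpha,F\restriction\alpha)$ for $\alpha<\omega_1$ that is absolute for every transitive model of $\ZFm+\OC$, and $P$ is defined by the formula ``$\exists\alpha\,(x=p_\alpha)$'' read off from $F$. I would then express both ``$x\in P$'' and ``$x\notin P$'' in $\Sigma^1_2$ form: each holds iff there exists $z\in\omega^\omega$ such that $M_z$ is well-founded and extensional, $\tr(M_z)\vDash\ZFm+\OC+\VL$, $x\in\tr(M_z)$, and $\tr(M_z)$ satisfies ``$\exists\alpha\,(x=p_\alpha)$'' (respectively its negation). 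By Propositions \ref{satisfactionsingle}, \ref{satisfactionmany} and \ref{collapse} the matrix is a $\coan$ condition (well-foundedness) conjoined with Borel conditions, so each displayed statement is $\Sigma^1_2$, whence $P\in\del$.

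The delicate point — and the step I expect to be the main obstacle — is proving that these two $\Sigma^1_2$ definitions are \emph{both correct}. Existence of a correct witness for any real $x$ comes from Propositions \ref{Lregunc}, \ref{mustbeL} and \ref{manydeltas}: $x$ lies in some $\Ell_\delta\prec\Ell_{\omega_1}$ with $\delta<\omega_1$, and such $\Ell_\delta$ is a well-founded extensional model of $\ZFm+\OC+\VL$ that, by elementarity, decides the statement about $x$ correctly. The converse (every witness decides $P$ correctly) is automatic for ``$x\in P$'', since ``$\exists\alpha\,(x=p_\alpha)$'' is upward absolute and $F$ is absolute; but for ``$x\notin P$'' it is exactly here that $p_\alpha\notin\Ell_{\alpha+1}$ is used. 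That demand makes the $\Ell$-rank of $p_\alpha$ exceed $\alpha$, so if $x=p_\alpha$ then $\alpha$ is below the height of \emph{any} transitive model containing $x$; consequently no witnessing model $\tr(M_z)$ with $x\in\tr(M_z)$ can erroneously satisfy ``$\forall\alpha\,(x\neq p_\alpha)$''. This locality calibration, married to the reflection and coding machinery of Section 6, yields that $P$, and therefore $X=2^\omega\setminus P$, is $\del$; Proposition \ref{zfcbrendle} then finishes the proof.
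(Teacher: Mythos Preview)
Your proposal is correct and follows essentially the same strategy as the paper: build the set by a $<_\Ell$-canonical transfinite recursion, insert a locality condition forcing the $\Ell$-rank of the $\alpha$-th point to exceed $\alpha$, and then use the reflection/coding machinery of Section~6 to extract both a $\Sigma^1_2$ and a $\Pi^1_2$ definition. Two points of comparison are worth recording. First, the paper's recursion is simpler than yours: rather than reserving subcopies $R_\alpha\subseteq K_\alpha$, it picks $x_\alpha\in [B_\alpha]\setminus\bigcup_{\beta<\alpha}[C_\beta]$ directly (legitimate because $[B_\alpha]\approx\omega^\omega$ is not $\sigma$-compact), so that $G(\alpha,s)$ does not depend on $s$ at all and the absoluteness of $F$ is immediate from that of $G$. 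Second, the paper takes the witnessing theory to be $\Phi=\mathrm{Th}(\Ell_{\omega_1})$ rather than $\ZFm+\OC+\VL$; this sidesteps the need to argue separately that the smaller theory proves the well-definedness statements (your analogues of ``$K_\alpha$, $N_\alpha$ exist and $p_\alpha$ can be chosen''), since those sentences are trivially in $\Phi$. Neither difference is substantive.
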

\begin{proof}
It will be enough to construct a $\del$ subset $X$ of $\omega^\omega$ that
satisfies the following conditions.
\begin{itemize}
\item[$(1')$] For every copy $K$ of $2^\omega$ in
$\omega^\omega$ there exists a copy $K'\subseteq K$ of $2^\omega$ such that
$K'\cap X=\varnothing$.
\item[$(2')$] There exists a closed copy $N$ of $\omega^\omega$ in
$\omega^\omega$ such that $N'\cap X\neq\varnothing$ whenever $N'\subseteq N$ is
a closed copy of $\omega^\omega$ in $\omega^\omega$.
\end{itemize}
In fact, it is clear that $Y=\omega^\omega\setminus X$ will be $\del$ as well,
and it will
satisfy the requirements of Proposition \ref{zfcbrendle}.

First we describe the construction of such a set $X$, disregarding the
definability requirements.
Enumerate as $\{N_\alpha:\alpha< \omega_1\}$ all closed copies of
$\omega^\omega$ in
$\omega^\omega$. Enumerate as $\{K_\alpha:\alpha< \omega_1\}$ all copies of
$2^\omega$ in $\omega^\omega$. For every $\alpha< \omega_1$, choose
$$
x_\alpha\in N_\alpha\setminus\bigcup_{\beta<\alpha}K_\beta.
$$
Notice that the above choice is always possible because
$N_\alpha\approx\omega^\omega$ cannot be written as the union of countably many
of its compact subspaces. Let $X=\{x_\alpha:\alpha< \omega_1\}$. One sees that
condition $(2')$ is satisfied by setting $N=\omega^\omega$. Furthermore, the
intersection of $X$ with
each $K_\alpha$ is at most countable by construction. Since each
$K_\alpha\approx 2^\omega\approx 2^\omega\times 2^\omega$, it follows that
condition $(1')$ is satisfied.

The rest of the proof is devoted to making the above construction definable. The
formula that defines $X$ will be
$$
\exists\alpha\,[(\alpha\textrm{ is a countable ordinal})\wedge (x=F(\alpha))],
$$
where $F$ is the function that will be given by Theorem \ref{formaltransf}. Once
$F$ is defined,
we will denote the above formula by $\chi(x)$.

For the inductive step, we need to define $G(\alpha,s)$.
Let $C_\alpha$ for $\alpha< \omega_1$ denote the $\alpha$-th code for a copy of
$2^\omega$ in $\omega^\omega$ according to the well-order $<_\Ell$. Let
$B_\alpha$ for $\alpha< \omega_1$ denote the $\alpha$-th code for a closed copy
of
$\omega^\omega$ in $\omega^\omega$ according to the well-order $<_\Ell$.
If $\alpha$ is not a countable ordinal, simply let $G(\alpha,s)=\varnothing$. If
$\alpha$ is a countable ordinal, let $G(\alpha,s)=x$, where $x$ is uniquely
defined by the following conditions. Recall that we are identifying
$\omega^\omega$ with the subspace $2^\omega\setminus D$ of $2^\omega$.
Notice that we will not make use of the parameter $s$. However,
such parameter is needed in general (consider for example Proposition
\ref{consbernstein}).

\newpage

\begin{enumerate}
\item\label{inbairespace} $x\in\omega^\omega$.
\item\label{mainreq} $x\in[B_\alpha]\setminus\bigcup_{\beta<\alpha}[C_\beta]$.
\item\label{notinLalpha} $x\notin \Ell_\alpha$.
\item $x$ is the $<_\Ell$-least set satisfying $(\ref{inbairespace})$,
$(\ref{mainreq})$ and $(\ref{notinLalpha})$.
\end{enumerate}
As in Section 6, we will denote by $\OC$ the statement ``Every ordinal
is countable''. Let $\Phi$ denote the set of sentences $\varphi$ in the language
$\LL_\in$ of set theory such that $L_{\omega_1}\vDash\varphi$. 
Notice that $\ZFm+\VL+\OC\subseteq\Phi$ (use Proposition \ref{Lregunc} for
$\ZFm$ and Proposition \ref{mustbeL} for $\VL$). Furthermore, it is easy to
check that the following sentences also belong to $\Phi$.
\begin{enumerate}
\item[(A)] ``For every ordinal $\alpha$ there exists a set $\CC$ consisting of
codes for copies of $2^\omega$ in $\omega^\omega$, such that the order type of
$\CC$ according to $<_\Ell$ is at least $\alpha$''.
\item[(B)] ``For every ordinal $\alpha$ there exists a set $\BB$ consisting of
codes for closed copies of $\omega^\omega$ in $\omega^\omega$, such that the
order type of $\BB$ according to $<_\Ell$ is at least $\alpha$''. 
\item[(C)] ``For every ordinal $\alpha$ there exists $x$ satisfying
$(\ref{inbairespace})$ and $(\ref{mainreq})$''.
\end{enumerate}
We claim that $G$ is well-defined and absolute for transitive models of $\Phi$.
In fact, since (A) and (B) guarantee that the functions $\alpha\mapsto C_\alpha$
and $\alpha\mapsto B_\alpha$
are well-defined, it will follow from (C) that $G$ is well-defined too. At this
point, absoluteness is easy to check.

Notice that, since we are not using the parameter $s$,
the absoluteness of $F$ immediately follows from the absoluteness of $G$.
However, in general, one would have to use the second part of Theorem
\ref{formaltransf}
to prove the absoluteness of $F$.

Let $\theta(x)$ denote the statement
$$
\exists\delta <\omega_1 \left[(\Ell_\delta\vDash\Phi)\wedge(x\in
\Ell_\delta)\wedge(\Ell_\delta\vDash\chi(x))\right].
$$
Next, we will show that $\chi(x)$ is equivalent to $\theta(x)$ for every $x$.
First assume that $\chi(x)$ holds, and let $\alpha< \omega_1$ be such that
$x=F(\alpha)$. By Proposition \ref{manydeltas}, there exists
$\delta <\omega_1$ such that $\Ell_\delta\vDash\Phi$ and $x\in \Ell_\delta$.
Notice that $\alpha<\delta$ by condition (\ref{notinLalpha}).
Therefore $\Ell_\delta\vDash F(\alpha)=x$ by the absoluteness of $F$. Since
$\Ell_\delta\vDash\OC$, it follows that $\Ell_\delta\vDash \chi(x)$.
The other direction simply uses the absoluteness of $F$.

Next, we will show that $X$ is a $\mathbf{\Sigma}^1_2$ space. It is easy to
realize, using the transitive collapse and Proposition \ref{mustbeL}, that
$\theta(x)$ is equivalent to
\begin{eqnarray}
\nonumber\exists z\in\omega^\omega\,[(M_z\textrm{ is
well-founded})\wedge(M_z\vDash\Phi)\wedge\\
\nonumber\wedge (\exists
n\in\omega\,((\pi_z(n)=x)\wedge(M_z\vDash\chi(n))))],
\end{eqnarray}
where we use the same notation of Section 6. The well-known (and easy to prove)
fact that the set $\{z\in\omega^\omega:M_z\textrm{ is well-founded}\}$ is
$\mathbf{\Pi}^1_1$,
together with Proposition \ref{satisfactionsingle}, Proposition
\ref{satisfactionmany} and Proposition \ref{collapse}, shows that the above
statement defines a $\mathbf{\Sigma}^1_2$ subset of $\omega^\omega$.

Finally, to see that $X$ is $\mathbf{\Pi}^1_2$, let $\theta(x)$ denote the
statement
$$
\forall\delta <\omega_1 \left[ ((\Ell_\delta\vDash\Phi)\wedge(x\in
\Ell_\delta))\rightarrow(\Ell_\delta\vDash\chi(x))\right]
$$
and use the same kind of argument as above.
\end{proof}

\begin{proposition}\label{consbernstein}
Assume $\VL$. Then there exists a $\del$ space that is $\CB$ but does not have
the $\CBP$.
\end{proposition}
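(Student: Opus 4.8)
The plan is to recycle the machinery of Proposition~\ref{consbrendle}, using a Bernstein set as the witness. By Proposition~\ref{zfcbernstein} every Bernstein set in an uncountable Polish space is $\CB$ but fails the $\CBP$, so it suffices to construct, under $\VL$, a Bernstein set $X\subseteq 2^\omega$ that is $\del$. Recall the classical recursion: each copy of $2^\omega$ in $2^\omega$ is a non-empty perfect compact set, hence of the form $[C]$ for a crowded $C\subseteq 2^{<\omega}$, so we may enumerate all such copies as $\{[C_\alpha]:\alpha<\omega_1\}$; by recursion on $\alpha<\omega_1$ we pick a pair of distinct points $\langle x_\alpha,y_\alpha\rangle\in [C_\alpha]$ avoiding the countable set $\{x_\beta,y_\beta:\beta<\alpha\}$ of points chosen so far. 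With $X=\{x_\alpha:\alpha<\omega_1\}$, the point $x_\alpha$ witnesses $X\cap[C_\alpha]\neq\varnothing$, while $y_\alpha$ is never re-selected (later stages explicitly avoid it) and hence witnesses $(2^\omega\setminus X)\cap[C_\alpha]\neq\varnothing$; thus $X$ is a Bernstein set.

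To make this definable I would imitate Proposition~\ref{consbrendle} step for step. Let $C_\alpha$ be the $\alpha$-th code for a copy of $2^\omega$ in $2^\omega$ according to $<_\Ell$, and for a countable ordinal $\alpha$ define $G(\alpha,s)$ to be the $<_\Ell$-least pair $\langle x,y\rangle$ of distinct points of $[C_\alpha]$ with $x,y\notin\Ell_\alpha$ and $x,y\notin\bigcup\{\{u,v\}:\langle u,v\rangle\in\ran(s)\}$, setting $G(\alpha,s)=\varnothing$ for other $\alpha$. In sharp contrast to Proposition~\ref{consbrendle}, here the parameter $s=F\upharpoonright\alpha$ is used essentially, since it records the pairs that the new pair must avoid. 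Let $F$ be the function given by Theorem~\ref{formaltransf}, so that $F(\alpha)=G(\alpha,F\upharpoonright\alpha)$; put $X=\{x:\exists\alpha<\omega_1\,\exists y\,(F(\alpha)=\langle x,y\rangle)\}$ and let $\chi(x)$ be the formula $\exists\alpha\,[(\alpha\textrm{ is a countable ordinal})\wedge\exists y\,(F(\alpha)=\langle x,y\rangle)]$.

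The complexity computation should be the same as in Proposition~\ref{consbrendle}. Take $\Phi=\{\varphi:\Ell_{\omega_1}\vDash\varphi\}$, so that $\ZFm+\VL+\OC\subseteq\Phi$ by Propositions~\ref{Lregunc} and~\ref{mustbeL}, and add to $\Phi$ the sentence that $\alpha\mapsto C_\alpha$ is total (the analogue of (A)) together with ``for every ordinal $\alpha$ and every countable $P\subseteq 2^\omega$ there are two distinct points of $[C_\alpha]$ outside $P$'' (the analogue of (C), which makes each recursive step succeed, as $[C_\alpha]$ is perfect while $\Ell_\alpha$ and the earlier choices are countable under $\OC$); note that no analogue of (B) is needed, since no closed copies of $\omega^\omega$ enter the construction. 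Using $\Ell_\delta\prec\Ell_{\omega_1}$ for cofinally many $\delta<\omega_1$ (Proposition~\ref{manydeltas}) and the clause $x,y\notin\Ell_\alpha$ exactly as in Proposition~\ref{consbrendle} (to force $\alpha<\delta$ whenever $x\in\Ell_\delta$ is a first coordinate), one shows $\chi(x)$ is equivalent both to an existential and to a universal reflection statement $\theta(x)$ over countable $\Ell_\delta\vDash\Phi$; passing to the transitive collapse via Proposition~\ref{mustbeL}, and invoking that $\{z\in\omega^\omega:M_z\textrm{ is well-founded}\}$ is $\coan$ together with Propositions~\ref{satisfactionsingle},~\ref{satisfactionmany} and~\ref{collapse}, shows that $X$ is both $\mathbf{\Sigma}^1_2$ and $\mathbf{\Pi}^1_2$, hence $\del$.

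The main obstacle, and the sole genuine departure from Proposition~\ref{consbrendle}, is exactly the essential use of the parameter $s$: there it was ignored, so absoluteness of $F$ followed at once from that of $G$, whereas here I must deploy the full second clause of Theorem~\ref{formaltransf} and verify that $G$ is absolute for transitive models of $\Phi$---that is, that extracting the forbidden coordinates from $s$, forming the countable set to be avoided, and selecting the $<_\Ell$-least admissible pair of branches of $[C_\alpha]$ are all absolute. Once the absoluteness of $F$ is secured, both the Bernstein property of $X$ and its $\del$ definability follow as indicated, and Proposition~\ref{zfcbernstein} finishes the argument.
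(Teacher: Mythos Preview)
Your proposal is correct and follows essentially the same approach as the paper: construct a $\del$ Bernstein set by the method of Proposition~\ref{consbrendle} (the paper merely sketches this and cites \cite{khomskii}), then invoke Proposition~\ref{zfcbernstein}. Your observation that the parameter $s$ is genuinely needed here is exactly the point the paper flags in the proof of Proposition~\ref{consbrendle}, and the only inessential difference is that you work in $2^\omega$ rather than $\omega^\omega$.
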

\begin{proof}
Using the same method as in the proof of Proposition \ref{consbrendle}, one can
show that under $\VL$ there exists a $\del$ Bernstein set in $\omega^\omega$
(this is
well-known, see Fact 1.3.8 in \cite{khomskii}). Therefore, the desired
conclusion follows from Proposition \ref{zfcbernstein}.
\end{proof}

\section{A new proof of a theorem of Martin and Solovay}

The aim of this section is to give a new proof of the following classical result
(see Theorem 23.3 in \cite{millerh}), which is perhaps more transparent than the
usual one.
The main idea is that $\omega_1=\omega_1^\Ell$ implies the existence of
\emph{one} space of size $\omega_1$ with the property that we want (see
Proposition \ref{existscoan}), while $\MA + \neg\CH$ implies
that \emph{all} spaces of size $\omega_1$ are `the same' for our purposes (see
Lemma \ref{bb}).

Recall that, given an infinite cardinal $\lambda$, a subset $D$ of $2^\omega$ is
\emph{$\lambda$-dense} if $|U\cap D|=\lambda$ for every non-empty open subset
$U$ of $2^\omega$. Given a space $X$, we will denote by $X^\ast$ the space
$X\setminus V$,
where $V=\bigcup\{U:U\text{ is a countable open subset of }X\}$. It is easy to
see that $V=X\setminus X^\ast$ is countable, and that every non-empty open
subset of $X^\ast$ is uncountable.
Notice that, given any projective pointclass $\bG$, a space $X$ is
of complexity $\bG$ if and only if $X^\ast$ is of complexity
$\bG$.

\begin{theorem}[Martin, Solovay]\label{martinsolovay}
Assume $\MA + \neg\CH + \omega_1=\omega_1^\Ell$. Then every space of size
$\omega_1$ is $\coan$.
\end{theorem}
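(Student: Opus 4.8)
The plan is to follow the two-step strategy announced above: manufacture a single $\coan$ witness of size $\omega_1$ out of $\omega_1=\omega_1^\Ell$ (this is Proposition \ref{existscoan}), and then use $\MA+\neg\CH$ to show that, up to homeomorphism, there is only \emph{one} space of size $\omega_1$ worth considering (this is Lemma \ref{bb}, based on the result of Baldwin and Beaudoin). Since membership in the class $\coan$ is a topological invariant by the reassuring proposition of Section 4, it then suffices to prove that an arbitrary space $X$ of size $\omega_1$ is homeomorphic to the witness.

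Before that, several harmless reductions are in order. We may assume $|X|=\omega_1$ exactly, as countable spaces are $\Fs$ and hence $\coan$. Because $\neg\CH$ yields $\omega_1<\cccc$, the space $X$ is automatically zero-dimensional: embedding $X$ in $[0,1]^\omega$ with coordinate projections $\pi_n$, each set $\pi_n[X]$ has size $\omega_1<\cccc$, so its complement in $[0,1]$ is dense, and the clopen sets $\pi_n^{-1}[(s,t)]\cap X$ with $s,t\notin\pi_n[X]$ form a base; thus we may take $X\subseteq 2^\omega$. By the observation preceding the theorem, $X$ is $\coan$ if and only if $X^\ast$ is, so we may replace $X$ by $X^\ast$ and assume that every nonempty relatively open subset of $X$ is uncountable. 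Then $X$ is crowded and $\cl(X)$ (closure in $2^\omega$) is a nonempty compact crowded zero-dimensional space, hence $\cl(X)\approx 2^\omega$; re-embedding through such a homeomorphism, we may finally assume that $X$ is an $\omega_1$-dense subset of $2^\omega$. Applying the identical reductions to the witness supplied by Proposition \ref{existscoan} --- each of which preserves both cardinality $\omega_1$ and the property of being $\coan$ --- produces an $\omega_1$-dense subset $W$ of $2^\omega$ that is $\coan$.

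It now remains to invoke Lemma \ref{bb}, which under $\MA+\neg\CH$ provides a homeomorphism of $2^\omega$ carrying $X$ onto $W$. In particular $X\approx W$, and since $W$ is $\coan$ we conclude that $X$, and therefore the original space, is $\coan$. The main obstacle is clearly the proof of Lemma \ref{bb}. I would establish it by a Baumgartner-style forcing: consider the poset $\PPP$ of finite partial bijections between $X$ and $W$ that extend to a homeomorphism of $2^\omega$ respecting the usual clopen tree structure. The requirements ``$x\in\dom(p)$'' for $x\in X$ and ``$w\in\ran(p)$'' for $w\in W$ are dense in $\PPP$ by $\omega_1$-density, so a filter meeting these $\omega_1$ dense sets yields the desired homeomorphism. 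The crux --- and the place where $\MA+\neg\CH$ is genuinely used --- is verifying that $\PPP$ is ccc; this is a $\Delta$-system argument on the clopen patterns of the conditions, exactly the point handled by Baldwin and Beaudoin. A secondary technical point is to confirm that the $\omega_1=\omega_1^\Ell$ witness genuinely survives the passage to an $\omega_1$-dense set with its complexity intact.
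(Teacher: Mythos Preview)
Your proof is correct and follows essentially the same route as the paper: produce a $\coan$ witness of size $\omega_1$ via Proposition~\ref{existscoan}, reduce both it and the arbitrary space to $\omega_1$-dense subsets of $2^\omega$ using the $X\mapsto X^\ast$ operation and the fact that a crowded closed subset of $2^\omega$ is a copy of $2^\omega$, then apply Lemma~\ref{bb}. The only cosmetic difference is that you get into $2^\omega$ by a direct zero-dimensionality argument exploiting $|X|<\cccc$, whereas the paper invokes the Borel isomorphism of uncountable Polish spaces; both are standard and neither affects the structure of the proof.
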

\begin{proof}
By Proposition \ref{existscoan} there exists a $\coan$ space $D$ of size
$\omega_1$.
Since any two uncountable Polish spaces are Borel isomorphic (see Theorem 15.6
in \cite{kechris}), we can assume that $D\subseteq 2^\omega$. By considering
$D^\ast$, we can
assume that every non-empty open subset of $D$ is
uncountable. In particular $D$ is crowded, hence its closure in $2^\omega$ is
homeomorphic to $2^\omega$. In conclusion, we can assume without loss of
generality
that $D$ is an $\omega_1$-dense subspace of $2^\omega$. Now let $E$ be a space
of size $\omega_1$. As above, we can assume that
$E$ is an $\omega_1$-dense subspace of $2^\omega$. An application of Lemma
\ref{bb} concludes the proof.
\end{proof}

The following proposition is well-known. Actually, it is possible to obtain a
space with the additional property of not containing any copy of $2^\omega$
(this is a classical result of G\"odel, see Theorem 13.12 in \cite{kanamori}),
but we will not need this stronger version.
\begin{proposition}\label{existscoan}
Assume $\omega_1=\omega_1^\Ell$. Then there exists a $\coan$ space of size
$\omega_1$.
\end{proposition}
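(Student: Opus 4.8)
The plan is to exhibit a single $\coan$ subset $D$ of $\omega^\omega$ of size exactly $\omega_1$ by isolating a sufficiently thin family of constructible reals. The natural candidate is
$$
D=\{x\in\omega^\omega : x\in\Ell_\delta\text{ for some }\delta<\omega_1^x\},
$$
where $\omega_1^x$ denotes the least ordinal that is not recursive in $x$; informally, $D$ consists of the reals that already appear in the constructible hierarchy at a stage coded by a well-ordering of $\omega$ recursive in $x$. Two things then have to be checked: that $|D|=\omega_1$ under the hypothesis $\omega_1=\omega_1^\Ell$, and that $D$ is $\coan$.

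For the cardinality, since every $\omega_1^x$ is a countable ordinal we have $D\subseteq\Ell\cap\omega^\omega\subseteq\Ell_{\omega_1}$, so $|D|\le\omega_1$ is immediate. The reverse inequality is the nontrivial direction, and it is exactly where $\omega_1=\omega_1^\Ell$ enters: by Proposition \ref{manydeltas} there are cofinally many $\delta<\omega_1$ with $\Ell_\delta\prec\Ell_{\omega_1}$, and $\Ell_{\omega_1}\vDash\ZFm$ by Proposition \ref{Lregunc}, so new reals keep appearing in $\Ell$ cofinally below $\omega_1=\omega_1^\Ell$. A counting/diagonalization argument then shows that $D$ meets $\Ell_{\delta+1}\setminus\Ell_\delta$ for unboundedly many $\delta<\omega_1$: if $D$ were contained in some $\Ell_\gamma$ with $\gamma<\omega_1=\omega_1^\Ell$, one could pick a constructible real appearing after stage $\gamma$ but before the least ordinal non-recursive in it, contradicting $D\subseteq\Ell_\gamma$. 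Hence $|D|=\omega_1$.

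The complexity computation is the heart of the matter. The obvious rewriting of ``$x$ appears in $\Ell$ at some countable stage'' as ``there is a countable well-founded model of $\ZFm+\VL$ containing $x$'' only gives a $\mathbf{\Sigma}^1_2$ definition: coding such a model by a real $z$ and applying Proposition \ref{satisfactionmany} and Proposition \ref{collapse} makes the body Borel, but ``$M_z$ is well-founded'' is $\coan$, and the unrestricted $\exists z$ then produces a $\mathbf{\Sigma}^1_2$ set. The whole point of the bound $\delta<\omega_1^x$ is that the witnessing model may be taken to be coded by a real $z$ recursive in $x$. Consequently the quantifier $\exists z$ collapses to a quantifier $\exists e\in\omega$ over an index, with $z$ the (continuous, where defined) output of the $e$-th Turing functional with oracle $x$; a number quantifier does not raise the projective complexity. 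Using that $\{z:M_z\text{ is well-founded}\}$ is $\coan$, together with the Borel satisfaction and collapse results (Propositions \ref{satisfactionsingle}--\ref{collapse}) to express that $M_z$ models a sufficient finite fragment of $\ZFm+\VL+\OC$ and that $x\in\tr(M_z)$, each of the countably many conditions indexed by $e$ defines a $\coan$ set; since $\coan$ is closed under countable unions, $D$ is $\coan$.

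The main obstacle is precisely this last complexity bound, namely verifying that whenever $x$ is constructed at a countable stage, it is in fact constructed before the least ordinal non-recursive in $x$ and with a witnessing well-ordering recursive in $x$, so that the witness stays number-quantifiable. This is the content of the Spector--Gandy phenomenon, and it is where the restriction ``below the least non-recursive ordinal'' is essential: an unrestricted bound forces a genuine real quantifier and yields only $\mathbf{\Sigma}^1_2$. I expect the remaining bookkeeping --- checking that the $\Ell$-construction along an $x$-recursive well-ordering is Borel in the pair $(x,e)$, for which the coding apparatus of Section 6 is tailor-made, and pinning down the exact fragment of $\ZFm+\VL$ (at the admissibility level) needed so that the relevant collapse is an $\Ell_\delta$ with $\delta<\omega_1^x$ --- to be routine but somewhat lengthy. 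As a shortcut one may instead simply invoke Gödel's classical construction (Theorem 13.12 in \cite{kanamori}), which even yields the stronger, thin version.
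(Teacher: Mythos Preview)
Your approach is genuinely different from the paper's. The paper does not construct any specific $\coan$ set by hand; instead it observes that $R=\omega^\omega\cap\Ell$ is $\mathbf{\Sigma}^1_2$ of size $\omega_1$ (by the hypothesis), writes $R=\pi[A]$ with $A\in\coan$, and applies the Kond\^{o} uniformization theorem to extract a $\coan$ set $D\subseteq A$ on which $\pi$ is a bijection onto $R$. This takes three lines and uses no admissibility theory. Your route, by contrast, aims at the canonical largest thin $\coan$ set $C_1=\{x:x\in\Ell_{\omega_1^x}\}$; this gives a more explicit object (and, as you note, even the stronger ``thin'' conclusion), at the cost of substantially more machinery.

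Two points in your sketch are not yet correct as written. First, the cardinality argument is circular: you assert that if $D\subseteq\Ell_\gamma$ one can ``pick a constructible real appearing after stage $\gamma$ but before the least ordinal non-recursive in it'', but that is exactly the statement $D\nsubseteq\Ell_\gamma$ you are trying to prove. The usual fix is the master-code argument: whenever a new real first appears in $\Ell_{\beta+1}$, the structure $\Ell_\beta$ is pointwise definable, so there is a real $m\in\Ell_{\beta+1}$ coding $(\Ell_\beta,\in)$, and from $m$ one computes a well-ordering of type $\beta$, hence $\omega_1^m>\beta$ and $m\in D$. Propositions~\ref{Lregunc} and~\ref{manydeltas} alone do not yield this. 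Second, the witnessing \emph{model} is not in general coded by a real recursive in $x$; only a well-ordering of the relevant type $\delta<\omega_1^x$ is. The correct formulation quantifies $\exists e\in\omega$ over an index for an $x$-recursive well-ordering and then expresses ``$x\in\Ell_{|\{e\}^x|}$'' by a formula that is $\coan$ on the well-ordering locus; the model built along that well-ordering is only hyperarithmetic in $x$. With these two repairs your plan goes through, but note that the paper's uniformization argument avoids both issues entirely.
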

\begin{proof}
It is well-known that the set $R=\omega^\omega\cap \Ell=(\omega^\omega)^\Ell$ of
all
constructible reals
is
$\mathbf{\Sigma}_2^1$ (see for example Theorem 13.9 in \cite{kanamori}). Also
notice that $R$ has size $\omega_1$ by the
assumption
$\omega_1=\omega_1^\Ell$. Let $A\subseteq \omega^\omega\times\omega^\omega$ be a
$\coan$ set
such that
$\pi[A]=R$, where
$\pi:\omega^\omega\times\omega^\omega\longrightarrow\omega^\omega$ is the
projection on the first coordinate. By the Kond\^{o} Uniformization
Theorem (see Theorem 12.3 in \cite{kanamori}), there exists a $\coan$ set
$D\subseteq A$ such that $\pi\upharpoonright
D:D\longrightarrow R$ is a bijection. In particular, the size of $D$ is
$\omega_1$.
\end{proof}

The following result first appeared (in a more general form) as Lemma 3.2 in
\cite{baldwinbeaudoin}. See also Theorem 2.1 and Corollary 2.2 in \cite{medini}
for
a simpler version of the proof.

\begin{lemma}[Baldwin, Beaudoin]\label{bb}
Assume $\MAsigma$. Let $\lambda<\cccc$ be an infinite cardinal. If $D$ and $E$
are $\lambda$-dense subsets of $2^\omega$ then there exists a homeomorphism
$f:2^\omega\longrightarrow 2^\omega$ such that $f[D]=E$.
\end{lemma}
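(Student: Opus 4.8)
The plan is to realize $f$ as the limit of a coherent system of finite clopen identifications, reducing the whole problem to the combinatorial task of producing the right bijection between $D$ and $E$ and then showing it is induced by a homeomorphism. Call a bijection $\pi\colon D\to E$ \emph{coherent} if for every $n$ the assignment sending the level-$n$ basic clopen set $[s]$ (with $s\in 2^n$) that contains some $d\in\dom(\pi)$ to the level-$n$ set containing $\pi(d)$ is single-valued and injective on the occupied boxes. The first thing I would record is that every coherent $\pi$ extends to an autohomeomorphism of $2^\omega$: one builds a level-preserving automorphism of the tree $2^{<\omega}$ greedily, level by level, at each step extending the partial injection on the $2^n$ boxes (forced only on the boxes occupied by $\dom(\pi)$) to a bijection of $2^n$ cohering with the previous level. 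Since $\pi$ is total on the dense set $D=\dom(\pi)$ with $\ran(\pi)=E$, the resulting homeomorphism $f$ satisfies $f[D]=\pi[D]=E$. Thus it suffices to construct a \emph{coherent} bijection of $D$ onto $E$.

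Such a bijection I would build by a back-and-forth of length $\lambda$, using enumerations $D=\{d_\alpha:\alpha<\lambda\}$ and $E=\{e_\alpha:\alpha<\lambda\}$ afforded by $\lambda=|D|=|E|$. At stage $\alpha$ one has a coherent partial matching $\sigma_\alpha$ of size ${<}\lambda$ and wants to enlarge it so as to bring $d_\alpha$ into the domain (forth) and $e_\alpha$ into the range (back). The forth step asks for $e\in E$ such that $\sigma_\alpha\cup\{(d_\alpha,e)\}$ is still coherent; the only constraint on $e$ is that, for each $n$ whose level-$n$ box of $d_\alpha$ is already occupied by a previously matched point, $e$ must lie in the image box prescribed by $\sigma_\alpha$. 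Because every basic clopen set meets $E$ in $\lambda$-many points while fewer than $\lambda$ have been spent, a single isolated step is harmless; the back step is symmetric, using $\lambda$-density of $D$.

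The hard part, and the reason $\MAsigma$ is needed rather than bare $\ZFC$, is global. The nested sequence of image boxes prescribed for $d_\alpha$ can shrink to a single point whenever previously matched points accumulate at $d_\alpha$, and nothing in $\ZFC$ prevents that limit point from lying outside $E$; a greedy recursion may thus paint itself into a corner where no coherent image for $d_\alpha$ exists. To avoid this one must choose the whole matching generically, running the construction as a single application of $\MAsigma$ that meets $\lambda+\omega=\lambda<\cccc$ dense sets: for each $n$ a refinement set (securing a total homeomorphism), for each $d\in D$ a set driving $d$ into $E$, and for each $e\in E$ a set driving $e$ into the range. The main obstacle I anticipate is arranging the underlying poset of finite approximations to be $\sigma$-centered at all. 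The tempting device of recording an explicit finite injection $d\mapsto e$ is fatal, since for a fixed $d$ the conditions sending $d$ to the various points of $E$ inside one box form an antichain of size $\lambda$, destroying even the countable chain condition. The correct device should commit, for each handled point, only a nested sequence of clopen \emph{target regions}, each kept $E$-large by $\lambda$-density, and should group conditions by their finite clopen part so that any two sharing a stem amalgamate via a further refinement separating the finitely many committed points. Guaranteeing that such a refinement always exists, preserves every standing commitment on both the $D$- and $E$-side, and still permits each target region to be shrunk onto a genuine point of $E$ along the generic filter (which is what finally forces $f(d)\in E$) is the delicate technical heart of the argument, and is exactly the point at which one leans on $\lambda$-density together with Bell's equivalence $\MAsigma\Leftrightarrow\pppp=\cccc$.
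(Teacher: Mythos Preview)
The paper does not give its own proof of this lemma: it is quoted from Baldwin and Beaudoin and the reader is referred to \cite{baldwinbeaudoin} (and to \cite{medini} for a streamlined version). So there is nothing in the paper to compare your argument against directly; your sketch has to be measured against those references.

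Your outline has the right shape and correctly isolates the central difficulty: the naive poset of finite partial coherent injections from $D$ to $E$ is not $\sigma$-centered once $\lambda>\omega$, since conditions assigning a fixed $d$ to distinct points of $E$ form a large antichain. Where the proposal falls short is in the resolution you offer. You suggest that a condition should commit each handled $d\in D$ only to a clopen \emph{target region}, and that the generic filter should shrink these regions down to a point. But a nested sequence of clopen sets in $2^\omega$ converges to an arbitrary point of $2^\omega$, and none of the dense sets you name (raise the level, put $d$ into the committed set, put $e$ into the committed set) forces that limit to lie in $E$. The set $E$ is merely $\lambda$-dense, not comeager, so the fact that every finite-stage region meets $E$ in $\lambda$ many points is perfectly compatible with the limit point landing outside $E$; and one cannot write ``the target region of $d$ has shrunk to a point of $E$'' as a finitary condition. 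You yourself flag this as ``the delicate technical heart of the argument'' and then stop. As written, what you have is a plan in which the decisive step is still open.

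In the cited proofs the conditions \emph{do} record explicit pairs $(d,e)\in D\times E$; the $\sigma$-centeredness is not obtained by weakening the commitments to clopen regions, but by the precise choice of the poset and an amalgamation argument (exploiting $\lambda$-density) showing that conditions with the same finite tree shadow are compatible. Supplying that definition and verifying the amalgamation is exactly the work required to turn your sketch into a proof, and it is worth consulting \cite{baldwinbeaudoin} or \cite{medini} to see how it is carried out.
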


\section{Modifying the value of the continuum}

At this point, it is natural to wonder whether the counterexamples
obtained in Section 7 are compatible with different values of the continuum. In
the case of Proposition \ref{conslambda}, it is clear that one can obtain
arbitrarily large values of $\cccc$ by forcing over $\Ell$ with the usual ccc
poset that proves the consistency of $\MA$. The next proposition show that
$\cccc=\omega_1$
is also possible.
\begin{proposition}\label{contlambda}
The existence of a $\an$ space that has the $\MP$ but is not Polish is
compatible with $\CH$.
\end{proposition}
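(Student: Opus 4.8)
The plan is to realize the counterexample of Proposition \ref{zfclambda} inside a model of $\CH$ by producing an uncountable $\lambda'$-set $Y\subseteq 2^\omega$ that is $\coan$. Indeed, if $Y$ is $\coan$ then its complement $X=2^\omega\setminus Y$ is automatically $\an$, and if $Y$ is moreover an uncountable $\lambda'$-set then Proposition \ref{zfclambda} gives at once that $X$ has the $\MP$ and is not Polish. (Note that $X$ is automatically uncountable: a co-countable subset of $2^\omega$ is not even a $\lambda$-set, so the complement of a $\lambda'$-set cannot be countable.) Thus the whole problem reduces to constructing an uncountable $\coan$ $\lambda'$-set while keeping $\cccc=\omega_1$. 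Observe that the construction of Proposition \ref{consbrendle}, carried out under $\VL$, only yields complexity $\del$; to reach the lower bound $\coan$ we must push the definition down one further level, which is exactly what the method of Fischer and Friedman is designed to do.

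Concretely, I would force over $\Ell$ with a finite-support ccc iteration $\PPP$ of length $\omega_1$ and of size $\omega_1$, so that $\CH$ is preserved, and let $\PPP$ accomplish two interleaved tasks. The first task is to build the points of $Y=\{y_\alpha:\alpha<\omega_1\}$ generically, following a bookkeeping of all (names for) countable subsets of $2^\omega$ and diagonalizing at each step so as to force every countable subset of $Y\cup C$ to be $\Gd$ in $Y\cup C$ for every countable $C$; this is the bookkeeping behind the classical $\ZFC$ construction of a $\lambda'$-set of size $\omega_1$ (Theorem 5.5 in \cite{millers}), now executed by the generic. The second task, interleaved with the first, is to write the generic data into a sequence of reals by almost disjoint coding and then apply David's trick, so that membership ``$x\in Y$'' becomes equivalent to the assertion that every (equivalently, some) well-founded model containing $x$ together with the relevant coding reals believes that $x$ was enumerated into $Y$ by the recursion. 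The coding is arranged precisely so that all such models agree, which collapses what is a priori a $\mathbf{\Sigma}^1_2$-over-$\Ell$ predicate into a genuine $\coan$ one; this is the same reflection computation used at the end of Proposition \ref{consbrendle}, executed one level lower.

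Three verifications then remain, of which only the last is serious. Preservation of $\CH$ is immediate, since $\PPP$ is ccc of size $\omega_1$ over a model of $\CH$. That $Y$ is genuinely a $\lambda'$-set follows because every countable subset of $Y\cup C$ in the extension, together with its witnessing $C$, is caught at some intermediate stage and handled there. That the resulting defining formula is $\coan$ is the content of the David's-trick reflection argument, verified as in the $\mathbf{\Sigma}^1_2$/$\mathbf{\Pi}^1_2$ analysis of Proposition \ref{consbrendle} but shifted downward by the coding. The hard part will be the compatibility of the two interleaved tasks: the almost disjoint coding adds reals, and one must check that these additions neither destroy the $\Gd$-ness already secured for the countable subsets handled earlier in the bookkeeping, nor raise $\cccc$ above $\omega_1$, while at the same time the coding must be rigid enough to pin $Y$ down as the unique object satisfying the $\coan$ formula. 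Balancing the genericity needed for the $\lambda'$-property against the rigidity needed for the co-analytic definition, all inside a single size-$\omega_1$ ccc forcing, is where the method of Fischer and Friedman does its real work.
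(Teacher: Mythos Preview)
Your approach is very different from the paper's, and considerably more complicated. The paper does not build the $\coan$ $\lambda'$-set by coding over $\Ell$ at all. Instead, it simply starts in a model of $\MA+\neg\CH+\omega_1=\omega_1^{\Ell}$, where by the Martin--Solovay theorem (Theorem~\ref{martinsolovay}) \emph{every} set of reals of size $\omega_1$ is $\coan$; in particular the $\ZFC$ $\lambda'$-set $Y$ of size $\omega_1$ is $\coan$ there for free. One then collapses $\cccc$ to $\omega_1$ with a countably closed poset. Since countably closed forcing adds no new reals and no new countable sets of ground-model reals, $Y$ remains a $\lambda'$-set of size $\omega_1$, the $\coan$ definition of $Y$ (hence the $\an$ definition of $X=2^\omega\setminus Y$) is preserved verbatim, and $\CH$ now holds. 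Proposition~\ref{zfclambda} finishes the argument. No coding, no David's trick, no iteration.

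Your proposal, by contrast, is essentially an outline of an independent research project, and you have correctly located its weak point without resolving it: almost-disjoint coding adds reals, and nothing in your sketch explains why those new reals do not produce a countable $D\subseteq 2^\omega$ for which $Y\cup D$ fails to be a $\lambda$-set (the bookkeeping you describe handles names that appear by some stage, but the coding reals themselves are being added at every stage and must be reconciled with \emph{all} earlier commitments). You also misattribute the technique: in this paper the Fischer--Friedman machinery is used to push complexity \emph{up} to $\mathbf{\Delta}^1_3$ while making $\cccc=\omega_2$, not to push it down to $\coan$; the kind of reflection-to-$\coan$ you have in mind is closer to Harrington-style coding, and carrying it out while preserving the $\lambda'$-property would require a genuine new argument. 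The paper avoids all of this by observing that countably closed forcing adds no reals, so there is simply nothing to preserve.
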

\begin{proof}
Let $Y\subseteq 2^\omega$ be a $\lambda'$-set of size $\omega_1$ in a model $\MA
+ \neg\CH + \omega_1=\omega_1^\Ell$, and notice that $Y$ is $\coan$ by Theorem
\ref{martinsolovay}. Now collapse $\cccc$ to $\omega_1$ using a countably closed
forcing poset. It is easy to check that $Y$ will remain a $\lambda'$-set of size
$\omega_1$ in the extension. Furthermore, $X=2^\omega\setminus Y$ will remain
$\an$. An application of Proposition \ref{zfclambda} concludes the proof.
\end{proof}

The situation regarding Proposition
\ref{consbrendle} and Proposition \ref{consbernstein} is more delicate.
We will indicate how to obtain $\mathbf{\Delta}^1_3$ counterexamples in models
of $\cccc=\omega_2$
using a general method introduced by Fischer and Friedman in
\cite{fischerfriedman}. We will assume some familiarity with their article, and
use the same notation. The general idea is to perform a countable support
iteration
$\langle\langle\PPP_\alpha:\alpha\leq\omega_2\rangle,\langle\dot{\QQQ}
_\alpha:\alpha <\omega_2\rangle\rangle$ of $S$-proper posets over $\Ell$, where
$S$ is a stationary subset of $\omega_1$ that has been fixed in advance, as in
Section 5 in \cite{fischerfriedman}. Suppose that we have already defined
$\langle\langle\PPP_\beta:\beta\leq\alpha\rangle,\langle\dot{\QQQ}
_\beta:\beta <\alpha\rangle\rangle$ for some $\alpha< \omega_2$. We will set
$\dot{\QQQ}_\alpha=\dot{\QQQ}^0_\alpha*\dot{\QQQ}^1_\alpha$. Let $\QQQ^0_\alpha$
be a proper poset
of size $\omega_1$ in $\Ell^{\PPP_\alpha}$. (There are no additional
requirements on $\QQQ^0_\alpha$: this poset is ``reserved'' for future
applications, as in the proofs of Theorem 2 and Theorem 3 in
\cite{fischerfriedman}.) Suppose also that $\sigma_\alpha$ is a
$\PPP_\alpha*\dot{\QQQ}^0_\alpha$-name for a real. Then there exists an
$S$-proper poset $\QQQ^1_\alpha$ of size $\omega_1$ in
$\Ell^{\PPP_\alpha*\dot{\QQQ}^0_\alpha}$ such that, at the end of the
construction, both
$\{\sigma_\alpha^G:\alpha<\omega_2\text{ is a limit}\}$ and
$\{\sigma_\alpha^G:\alpha<\omega_2\text{ is a successor}\}$ will be
$\mathbf{\Sigma}^1_3$ for every $\PPP_{\omega_2}$-generic filter $G$ over
$\Ell$.
In fact, this can be obtained by replacing $x*y$ with $\sigma_\alpha^G$ in items
$(1)$, $(2)$ at the beginning of page 920 in \cite{fischerfriedman}, and by
modifying the definition of $\phi_\alpha$ in item $(2)$ by specifying that
$X_\alpha$ codes a limit (resp. successor) ordinal $\bar{\alpha}< \omega_2$
whenever $\alpha< \omega_2$ is a limit (resp. successor). 
\begin{proposition}\label{contbrendle}
The existence of a $\mathbf{\Delta}^1_3$ space that has the $\CBP$ but not $\MP$
is compatible with $\neg\CH$.
\end{proposition}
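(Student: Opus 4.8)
The plan is to carry out the recursion from the proof of Proposition \ref{consbrendle} inside the iteration set up before the statement of this proposition, so that the resulting set keeps its combinatorial features but becomes $\mathbf{\Delta}^1_3$ in a model of $\cccc=\omega_2$. As there, it is enough to construct a $\mathbf{\Delta}^1_3$ set $X\subseteq\omega^\omega$ satisfying conditions $(1')$ and $(2')$ from that proof, namely: every copy of $2^\omega$ in $\omega^\omega$ has a subcopy disjoint from $X$, and every closed copy of $\omega^\omega$ in $\omega^\omega$ meets $X$ (that is, $(2')$ with $N=\omega^\omega$). Then $Y=\omega^\omega\setminus X$ will be $\mathbf{\Delta}^1_3$ and satisfy the hypotheses of Proposition \ref{zfcbrendle}, so that $Y\cup D$ is the required space.

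First I would fix bookkeeping on $\omega_2$ that anticipates, after it appears, every code for a copy of $2^\omega$ and every code for a closed copy of $\omega^\omega$ (in the sense of Section 7), together with an enumeration of all reals of the final extension. At a limit stage $\alpha$ I let $\dot{\QQQ}^0_\alpha$ be Miller forcing relative to the superperfect tree coding the closed copy $[B_\alpha]$ of $\omega^\omega$ handed to us by the bookkeeping, and I let $\sigma_\alpha$ name the Miller real $x_\alpha\in[B_\alpha]\cap\omega^\omega$ that it adds; the $x_\alpha$ will constitute $X$. At a successor stage $\alpha$ I let $\dot{\QQQ}^0_\alpha$ be Cohen forcing relative to the copy $[C_\alpha]$ of $2^\omega$ handed to us (recall that a single Cohen real yields a perfect set $P_\alpha\subseteq[C_\alpha]$ of reals Cohen-generic over $\Ell^{\PPP_\alpha}$), and I let $\sigma_\alpha$ name the next real in the enumeration that is not among the Miller reals added at earlier stages; these reals will constitute $Y$. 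In either case $\dot{\QQQ}_\alpha=\dot{\QQQ}^0_\alpha*\dot{\QQQ}^1_\alpha$, with $\QQQ^1_\alpha$ the coding poset.

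The two requirements do not interfere, and this is the crux. A Miller real is unbounded over the model in which it is added, whereas every copy of $2^\omega$ in $\omega^\omega$ is compact, hence bounded; therefore every Miller real added after stage $\alpha$ avoids $[C_\alpha]$, and in particular avoids $P_\alpha$. Since the points of $P_\alpha$ are fresh over $\Ell^{\PPP_\alpha}$, they also differ from all Miller reals added before stage $\alpha$. Hence $P_\alpha\subseteq[C_\alpha]\setminus X$ is a copy of $2^\omega$ witnessing $(1')$ for $[C_\alpha]$, and $(1')$ follows because the bookkeeping handles every copy of $2^\omega$ in the final model; condition $(2')$ is immediate, since each closed copy of $\omega^\omega$ equals some $[B_\alpha]\cap\omega^\omega$ and contains $x_\alpha\in X$. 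Moreover a real of $\Ell^{\PPP_\alpha}$ lies in $X$ precisely when it is a Miller real added before stage $\alpha$ (later Miller reals are fresh), so the classification made at successor stages is correct and $\{\sigma_\alpha^G:\alpha\text{ limit}\}=X$ while $\{\sigma_\alpha^G:\alpha\text{ successor}\}=\omega^\omega\setminus X$. Both sets are $\mathbf{\Sigma}^1_3$ by the coding, whence $X$, and therefore $Y$, is $\mathbf{\Delta}^1_3$.

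The main obstacle will be to check that the Fischer--Friedman coding still forces both designated sequences to be $\mathbf{\Sigma}^1_3$ once $\sigma_\alpha$ is allowed to name, at successor stages, an \emph{old} real selected by the enumeration rather than a real freshly added by $\dot{\QQQ}^0_\alpha$; this amounts to verifying that the argument near page 920 of \cite{fischerfriedman} is insensitive to this choice of names. One must also arrange the bookkeeping so that every code is caught after it appears and so that the successor enumeration is a bijection onto $\omega^\omega\setminus X$, which is possible since $\omega_2$ is regular, $\omega_1$ is preserved, and the iteration is $\omega_2$-cc, so that all reals and codes show up at stages below $\omega_2$. The remaining points---that each $\dot{\QQQ}^0_\alpha$ is proper of size $\omega_1$ (here one uses that $\CH$ holds in every $\Ell^{\PPP_\alpha}$ with $\alpha<\omega_2$) and that $\cccc=\omega_2$ in the final model---are routine within the framework.
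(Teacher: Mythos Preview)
Your proposal is correct and follows essentially the same route as the paper: run the recursion of Proposition~\ref{consbrendle} inside the Fischer--Friedman iteration, place the points of $X$ at limit stages, enumerate the complement at successor stages, and conclude that both sequences are $\mathbf{\Sigma}^1_3$, hence $X$ is $\mathbf{\Delta}^1_3$. Your identification of the one genuine point to check---that the coding argument tolerates $\sigma_\alpha$ naming an \emph{old} real at successor stages---is exactly what the paper addresses in the paragraph preceding the proposition.

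The one noteworthy difference is your verification of $(1')$. The paper does \emph{not} force with Cohen at successor stages to manufacture an explicit perfect $P_\alpha\subseteq[C_\alpha]\setminus X$; it simply takes $\dot{\QQQ}^0_\alpha$ to be trivial there and argues by cardinality. Since each $\sigma_\beta$ (for $\beta$ limit) is unbounded over $\Ell^{\PPP_\beta}$, only the $\omega_1$-many such reals added before a code for $K$ first appears can lie in the compact set $K$; thus $|X\cap K|\le\omega_1<\cccc$, and a copy of $2^\omega$ inside $K\setminus X$ drops out of the decomposition $K\approx 2^\omega\times 2^\omega$. Your Cohen step is correct but superfluous---indeed, your own unboundedness observation already yields this cardinality bound without it. Similarly, the paper does not commit to Miller forcing at limit stages: it just asks for any proper $\QQQ^0_\alpha$ of size $\omega_1$ adding an unbounded real, together with a name $\sigma_\alpha$ for an unbounded element of $[B_\alpha]$. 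Both choices work; the paper's is marginally lighter.
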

\begin{proof}
We will construct a $\mathbf{\Delta}^1_3$ subset $X$ of $\omega^\omega$
satisfying the same conditions $(1')$ and $(2')$ that appear in the proof of
Proposition
\ref{consbrendle}. Start by fixing a bookkeping function
$F:\omega_2\longrightarrow\mathsf{H}(\omega_2)$ such that $\{\alpha< \omega_2:
\alpha\text{ is a limit and
}F(\alpha)=x\}$ and $\{\alpha< \omega_2: \alpha\text{ is a successor and
}F(\alpha)=x\}$ are unbounded in $\omega_2$ for each $x\in
\mathsf{H}(\omega_2)$.

Assume that the iteration
$\langle\langle\PPP_\beta:\beta\leq\alpha\rangle,\langle\dot{\QQQ}
_\beta:\beta <\alpha\rangle\rangle$ has already been defined for some $\alpha<
\omega_2$. First assume that $\alpha$ is a limit.
If $F(\alpha)$ is a $\mathbb\PPP_\alpha$-name for a code $B$ for a closed copy
of
$\omega^\omega$ in $\omega^\omega$, choose a poset $\QQQ^0_\alpha$ adding an
unbounded real, then let $\sigma_\alpha$ be a
$\PPP_\alpha*\dot{\QQQ}^0_\alpha$-name
for an element of $\omega^\omega$ such that the following conditions are
satisfied.
\begin{enumerate}
\item $\Vdash_{\PPP_\alpha*\dot{\QQQ}^0_\alpha}\text{``}\sigma_\alpha\in
[B]\text{''}$.
\item\label{unbounded}
$\Vdash_{\PPP_\alpha*\dot{\QQQ}^0_\alpha}\text{``}\sigma_\alpha\text{ is
unbounded over }\omega^\omega\cap\Ell^{\PPP_\alpha}\text{''}$.
\end{enumerate}
Otherwise, let $\QQQ^0_\alpha$ be the trivial forcing and set
$\sigma_\alpha=\langle 0,0\ldots\rangle\,\check{}$. Now assume that $\alpha$ is
a successor. Let $\QQQ^0_\alpha$ be the trivial forcing. If $F(\alpha)=\tau$ is
a $\mathbb P_\alpha$-name for an element of $\omega^\omega$, proceed as follows,
otherwise let $\sigma_\alpha=\langle 1,1\ldots\rangle\,\check{}$. Define
$\BB=\{p\in\PPP_\alpha:p\Vdash\text{``}\tau\notin\{\sigma_\beta:\beta
<\alpha\text{ and }\beta\text{ is a limit}\}\text{''}\}$ and
$\CC=\{p\in\PPP_\alpha:p\Vdash\text{``}\tau\in\{\sigma_\beta:\beta <\alpha\text{
and }\beta\text{ is a limit}\}\text{''}\}$. Since $\BB\cup\CC$ is dense in $\PPP_\alpha$, we can fix a maximal antichain $\Aa$ in
$\PPP_\alpha$ such that $\Aa\subseteq\BB\cup\CC$. Let
$$
\sigma_\alpha=\{\langle \tau,p\rangle:p\in\Aa\cap\BB\}\cup\{\langle\langle
1,1\ldots\rangle\,\check{}\,,p\rangle:p\in\Aa\cap\CC\}.
$$
This concludes the construction.

Let $G$ be a $\PPP_{\omega_2}$-generic filter over $\Ell$, then set
$X=\{\sigma_\alpha^G:\alpha< \omega_2\text{ is a limit}\}$. Notice that
$\omega^\omega= X\cup\{\sigma_\alpha^G:\alpha< \omega_2\text{ is a successor}\}$
by the successor case of our construction. Therefore $X$ is a
$\mathbf{\Delta}^1_3$ space. Using condition $(\ref{unbounded})$, it is easy to
check that $|X\cap
K|\leq\omega_1<\cccc$ for every copy $K$ of $2^\omega$ in $\omega^\omega$. This
shows that condition $(1')$ is satisfied.
Finally, it is clear that condition $(2')$ is satisfied with $N=\omega^\omega$.
\end{proof}
\begin{proposition}\label{contbernstein}
The existence of a $\mathbf{\Delta}^1_3$ space that is $\CB$ but does not have
the $\CBP$ is compatible with $\neg\CH$.
\end{proposition}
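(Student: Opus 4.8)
The plan is to produce, in a model of $\cccc=\omega_2$ obtained from the Fischer--Friedman iteration set up just before Proposition \ref{contbrendle}, a $\mathbf{\Delta}^1_3$ subset $X$ of $\omega^\omega$ that is a Bernstein set in $\omega^\omega$; then Proposition \ref{zfcbernstein} immediately gives that $X$ is $\CB$ but does not have the $\CBP$, while $\cccc=\omega_2$ gives $\neg\CH$. As in Proposition \ref{contbrendle} I would take $X=\{\sigma_\alpha^G:\alpha<\omega_2\text{ is a limit}\}$, so that the two properties to arrange are that for every code $C$ for a copy of $2^\omega$ in $\omega^\omega$ one has both $X\cap[C]\neq\varnothing$ and $[C]\setminus X\neq\varnothing$. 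Since every copy of $2^\omega$ appearing in the final extension is coded by a real that already lives in some $\Ell^{\PPP_\beta}$ with $\beta<\omega_2$, a bookkeeping function $F:\omega_2\longrightarrow\mathsf{H}(\omega_2)$ will catch a name for each such code cofinally often.

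The mechanism for placing a point into a copy is forcing with a branch: when we wish to add a new element of $[C]$, we let the reserved poset $\QQQ^0_\alpha$ be Cohen forcing on the perfect tree $C$ (a countable, hence proper, poset of size $\leq\omega_1$), and we let $\sigma_\alpha$ be a $\PPP_\alpha\ast\dot{\QQQ}^0_\alpha$-name for the generic branch, so that $\sigma_\alpha^G\in[C]$ is new over $\Ell^{\PPP_\alpha}$. At limit stages I would use this to secure the first requirement: if $F(\alpha)$ is a $\PPP_\alpha$-name for a code $C$ for a copy of $2^\omega$, add such a branch (which, being a limit-$\sigma$, lands in $X\cap[C]$); otherwise let $\QQQ^0_\alpha$ be trivial and set $\sigma_\alpha$ to a fixed junk real of $\omega^\omega$, exactly as in the corresponding case of Proposition \ref{contbrendle}. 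The successor stages must do two competing jobs, which I separate by letting $F(\alpha)$ range over tagged pairs $\langle i,\dot x\rangle$ with $i\in\{0,1\}$, each caught cofinally often at successor stages: if $i=0$ and $\dot x$ names a code $C$ for a copy of $2^\omega$, add a branch in $[C]$ as above and set $\sigma_\alpha$ equal to it (this successor-$\sigma$, being new over $\Ell^{\PPP_\alpha}$, lies in $[C]\setminus X$, securing the second requirement); if $i=1$ and $\dot x=\tau$ names a real, we capture as in the successor case of Proposition \ref{contbrendle}, setting $\sigma_\alpha=\tau$ unless $\tau$ is already a limit-$\sigma$, in which case we use a junk real. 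The poset $\QQQ^1_\alpha$ is then supplied by the Fischer--Friedman construction, guaranteeing that both $\{\sigma_\alpha^G:\alpha\text{ is a limit}\}$ and $\{\sigma_\alpha^G:\alpha\text{ is a successor}\}$ are $\mathbf{\Sigma}^1_3$.

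The verification is then routine: the capturing clause ensures that every real outside $X$ occurs as a successor-$\sigma$, while genericity (together with the restriction that we capture $\tau$ only when it is not already a limit-$\sigma$) ensures that no successor-$\sigma$ belongs to $X$; hence the two $\sigma$-sets partition $\omega^\omega$ into $X$ and $\omega^\omega\setminus X$, both $\mathbf{\Sigma}^1_3$, so that $X$ is $\mathbf{\Delta}^1_3$. The limit branches witness $X\cap[C]\neq\varnothing$ and the successor branches witness $[C]\setminus X\neq\varnothing$ for every code $C$, so $X$ is a Bernstein set. The point I expect to be the main obstacle is precisely \emph{why} the second requirement cannot be obtained for free from a cardinality bound, the way the analogous condition $(1')$ was in Proposition \ref{contbrendle}: a single copy of $2^\omega$ contains continuum-many pairwise distinct subcopies, each of which is served its own branch in $X$, so $|X\cap[C]|$ may well equal $\cccc$, and $[C]\setminus X\neq\varnothing$ must therefore be engineered directly. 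This is what forces the successor stages to carry the extra branch-adding job alongside the capturing needed to keep $X$ exactly $\mathbf{\Delta}^1_3$, and hence what makes the tagged bookkeeping necessary.
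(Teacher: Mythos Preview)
Your proposal is correct and follows the same overall scheme as the paper: run the Fischer--Friedman iteration over $\Ell$, let $X=\{\sigma_\alpha^G:\alpha<\omega_2\text{ is a limit}\}$, and verify that $X$ is a $\mathbf{\Delta}^1_3$ Bernstein set in $\omega^\omega$. The point at which you diverge is in how you secure $[C]\setminus X\neq\varnothing$. You introduce a tagged bookkeeping and devote the tag-$0$ successor stages to adding a fresh generic branch through $C$, which then lies outside $X$ by newness. The paper avoids this extra machinery entirely: at the \emph{same} limit stage $\alpha$ at which $\sigma_\alpha$ is chosen in $[C]\setminus\Ell^{\PPP_\alpha}$, one simply observes that $[C]\setminus\Ell^{\PPP_\alpha}$ is infinite, so there is a $\PPP_\alpha*\dot{\QQQ}^0_\alpha$-name $\tau$ for a point of $[C]\setminus\Ell^{\PPP_\alpha}$ with $\tau\neq\sigma_\alpha$; then $\tau^G$ automatically avoids every limit-$\sigma_\beta^G$ (for $\beta<\alpha$ because $\sigma_\beta^G\in\Ell^{\PPP_\alpha}$, for $\beta=\alpha$ by choice, and for $\beta>\alpha$ because $\sigma_\beta^G\notin\Ell^{\PPP_\beta}\ni\tau^G$), so $\tau^G\in[C]\setminus X$. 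This is exactly the newness argument you use for your tag-$0$ branches, applied one stage earlier; it removes the need for the tag split and lets the successor stages do nothing but the capturing from Proposition~\ref{contbrendle}. Your version works, but the paper's is strictly simpler.
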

\begin{proof}
We will construct a $\mathbf{\Delta}^1_3$ Bernstein subset $X$ of
$\omega^\omega$.
Fix $F$ as in the proof of Proposition \ref{contbrendle}. Assume that the
iteration
$\langle\langle\PPP_\beta:\beta\leq\alpha\rangle,\langle\dot{\QQQ}
_\beta:\beta <\alpha\rangle\rangle$ has already been defined for some
$\alpha<\omega_2$. First assume that $\alpha$ is a limit.
If $F(\alpha)$ is a $\mathbb\PPP_\alpha$-name for a code $C$ for a copy of
$2^\omega$ in $\omega^\omega$, choose a poset $\QQQ^0_\alpha$ adding a new real,
then let $\sigma_\alpha$ be a $\PPP_\alpha*\dot{\QQQ}^0_\alpha$-name
for an element of $\omega^\omega$ such that
$\Vdash_{\PPP_\alpha*\dot{\QQQ}^0_\alpha}``\sigma_\alpha\in
[C]\setminus\Ell^{\PPP_\alpha}$''.
Otherwise, let $\QQQ^0_\alpha$ be the trivial forcing and set
$\sigma_\alpha=\langle 0,0\ldots\rangle\,\check{}$.
If $\alpha$ is a successor, proceed as in the proof of Proposition
\ref{contbrendle}. This concludes the construction.

Let $G$ be a $\PPP_{\omega_2}$-generic filter over $\Ell$, then set
$X=\{\sigma_\alpha^G:\alpha< \omega_2\text{ is a limit}\}$. The same reasoning
as in the the proof of Proposition \ref{contbrendle} shows that $X$ is a
$\mathbf{\Delta}^1_3$ space. Now let $K$ be a copy of $2^\omega$ in
$\omega^\omega$, coded by $C$. Assume that $F(\alpha)$ is a
$\mathbb\PPP_\alpha$-name for $C$ at a limit stage $\alpha$ of our construction.
Clearly $\sigma_\alpha^G$ witnesses that $X\cap K\neq\varnothing$. Furthermore,
since
$\Vdash_{\PPP_\alpha*\dot{\QQQ}^0_\alpha}\text{``}[C]\setminus\Ell^{\PPP_\alpha}
\text{ is infinite''}$, there exists a $\PPP_\alpha*\dot{\QQQ}^0_\alpha$-name
$\tau$ for an element of $\omega^\omega$ such that
$\Vdash_{\PPP_\alpha*\dot{\QQQ}^0_\alpha}\text{``}\tau\in
[C]\setminus\Ell^{\PPP_\alpha}\text{ and }\tau\neq\sigma_\alpha\text{''}
$. It is easy to check that $\tau^G$ witnesses that $(\omega^\omega\setminus
X)\cap K\neq\varnothing$. Therefore $X$ is Bernstein set.
\end{proof}

The following questions ask whether the counterexamples constructed in
Proposition \ref{contbrendle} and Proposition \ref{contbernstein} are of lowest
possible complexity. Question \ref{notchcbnotcbp} only asks for a
$\mathbf{\Pi}^1_2$ counterexample, because Corollary \ref{notchsigmacbcbp} rules
out  the existence of $\mathbf{\Sigma}^1_2$ counterexamples. Also observe that
the existence of a $\mathbf{\Sigma}^1_2$ Bernstein set (or, equivalently,
a $\mathbf{\Pi}^1_2$ Bernstein set) is not compatible with $\neg\CH$. In fact,
every $\mathbf{\Sigma}^1_2$ space of size at least $\omega_2$ contains a copy of
$2^\omega$ (see Proposition 13.7 in \cite{kanamori}).

\begin{question}
Is $\neg\CH$ compatible with the existence of a $\mathbf{\Sigma}^1_2$ or
$\mathbf{\Pi}^1_2$ space that has the $\CBP$ but not the $\MP$?
\end{question}

\begin{question}\label{notchcbnotcbp}
Is $\neg\CH$ compatible with the existence of a $\mathbf{\Pi}^1_2$ space that is
$\CB$ but does not have the $\CBP$?
\end{question}

The following corollary shows that none of the counterexamples
mentioned in the above questions is compatible with the assumption
$\dddd>\omega_1$. For a proof of Theorem \ref{charmm}, see Theorem 6.1 in
\cite{brendlelowe}.

\begin{theorem}[Brendle, L\"owe]\label{charmm} The following are equivalent.
\begin{itemize}
\item $\omega^\omega\cap \Ell[a]$ is not dominating for any $a\in\omega^\omega$.
\item Every $\mathbf{\Sigma}^1_2$ subset of $\omega^\omega$ is
Miller-measurable.
\end{itemize}
\end{theorem}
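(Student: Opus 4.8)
The plan is to prove the two implications separately, organizing both around a single combinatorial principle specific to Miller forcing. Identifying (as in the paragraph preceding Proposition \ref{consbrendle}) each closed copy $N$ of $\omega^\omega$ in $\omega^\omega$ with the branch space of a superperfect tree, the condition ``$N'\subseteq N$ is a closed copy of $\omega^\omega$'' becomes ``$N'$ is the branch space of a superperfect subtree'', and Miller-measurability of $A$ is precisely regularity of $A$ with respect to Miller forcing $\mathbb M$ (superperfect trees ordered by inclusion). The key point I would exploit is that, for a transitive model $M$, a real is ``$\mathbb M$-generic enough over $M$'' as soon as it is unbounded over $\omega^\omega\cap M$: a Miller fusion is indexed by the countably many splitting nodes of the evolving tree, and the only demand at the $n$-th splitting node is that the chosen immediate successors carry values exceeding a prescribed $M$-real, so a single $g$ unbounded over $M$ (exceeding every $M$-real infinitely often) lets the fusion meet all these countably many requirements at once, producing a superperfect tree below any given condition all of whose branches are generic over $M$. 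Theorem \ref{spsp} is the ambient reason this works: superperfectness is exactly non-$\sigma$-compactness, i.e.\ unboundedness, so ``unbounded over $M$'' and ``lands in a superperfect tree of $M$-generics'' coincide.

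For the implication from non-domination to $\mathbf\Sigma^1_2$ Miller-measurability, I would fix a $\mathbf\Sigma^1_2$ set $A$ with parameter $a$ and a closed copy $N$ of $\omega^\omega$, and let $b$ code both $a$ and $N$. By hypothesis $\omega^\omega\cap\Ell[b]$ is not dominating, so I fix $g\in\omega^\omega$ unbounded over $\Ell[b]$. Working inside $\Ell[b]$, the forcing theorem for $\mathbb M$ below $N$ yields a condition $S\leq N$ deciding whether the generic real lies in $A$, where membership is read off the Shoenfield tree of $A$; without loss of generality $S$ forces the generic into $A$. Then, in $\Vi$, I run the fusion described above with $g$ to build a superperfect subtree $N'\leq S$ all of whose branches are $\mathbb M$-generic over $\Ell[b]$. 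Shoenfield absoluteness (each branch $x$ gives $\Ell[b][x]\models x\in A$, hence $\Vi\models x\in A$) gives $[N']\subseteq A$; the symmetric case gives $[N']\cap A=\varnothing$. Thus $A$ is Miller-measurable.

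For the converse I would argue contrapositively: assuming $\omega^\omega\cap\Ell[a]$ is dominating for some $a$, I produce a $\mathbf\Sigma^1_2$ set that is not Miller-measurable. Since $\Ell[a]$ dominates, no real is unbounded over $\Ell[a]$, so by the previous paragraph there are no $\mathbb M$-generic reals over $\Ell[a]$ at all. I would first build, by a $<_\Ell$-style recursion relativized to $a$ and rendered $\mathbf\Sigma^1_2$ by exactly the transitive-collapse coding used in Proposition \ref{consbrendle} (and in the $\VL$ Bernstein construction it cites), a set $X$ that is Bernstein for the superperfect trees coded in $\Ell[a]$: along the recursive enumeration of those trees I place one branch of each into $X$ and keep another out. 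The remaining step is a bridging lemma---that under domination every superperfect tree $T\in\Vi$ contains the branch space of a superperfect tree $S\in\Ell[a]$---which upgrades ``Bernstein for $\Ell[a]$-trees'' to ``Bernstein for all trees of $\Vi$'', so that $X$ meets both $[T]$ and its complement for every $T$, witnessing via the framework of Proposition \ref{zfcbrendle} the failure of Miller-measurability.

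I expect the bridging lemma of the last paragraph to be the main obstacle, and it is where domination is genuinely used: its proof should run by showing that a superperfect $T\in\Vi$ with no superperfect $\Ell[a]$-subtree would behave like an $\mathbb M$-generic tree over $\Ell[a]$ and would therefore supply reals unbounded over $\Ell[a]$, contradicting domination; extracting the subtree $S\in\Ell[a]$ concretely will require a fusion carried out inside $\Ell[a]$ that tracks the splitting of $T$ through the $\Ell[a]$-dominating family. The secondary difficulty, in the first implication, is the claim that one unbounded real manufactures an entire superperfect tree of generics over a possibly large model $\Ell[b]$; as indicated above this is resolved by the countable length of Miller fusion together with the fact that unboundedness is exactly the genericity demand at each splitting node, so no appeal to $\cccc^{\Ell[b]}$ being small is needed.
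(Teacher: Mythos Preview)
The paper does not prove Theorem~\ref{charmm}; it merely cites \cite{brendlelowe} (see the sentence immediately preceding the statement: ``For a proof of Theorem~\ref{charmm}, see Theorem~6.1 in \cite{brendlelowe}''). There is therefore no proof in the paper to compare your proposal against.

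For what it is worth, your outline follows the Solovay-style scheme that \cite{brendlelowe} employs: for one direction, decide the $\mathbf{\Sigma}^1_2$ formula inside $\Ell[b]$, manufacture a superperfect tree of generics from an unbounded real, and transfer by Shoenfield absoluteness; for the converse, under domination run a $\mathbf{\Sigma}^1_2$ Bernstein-type construction over the $\Ell[a]$-coded superperfect trees and invoke the bridging lemma you describe. You have correctly located the two nontrivial steps. One caution: your justification that ``the only demand at the $n$-th splitting node is that the chosen immediate successors carry values exceeding a prescribed $M$-real'' is an oversimplification---full Miller-genericity over $\Ell[b]$ requires meeting every dense set in $\Ell[b]$, of which there may be uncountably many, so a countable fusion cannot simply list them. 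The reduction that makes a single unbounded real suffice (and, dually, the bridging lemma in the other direction) is precisely the forcing-specific content of \cite{brendlelowe}, and would need to be supplied rather than asserted.
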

\begin{corollary}
Assume that $\omega^\omega\cap \Ell[a]$ is not dominating for any
$a\in\omega^\omega$.
Let $X$ be a $\CB$ space, and assume that $X$ is $\mathbf{\Sigma}^1_2$ or
$\mathbf{\Pi}^1_2$. Then $X$ has the $\MP$.
\end{corollary}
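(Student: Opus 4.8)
The plan is to reduce the statement directly to Theorem \ref{mmcbeqmp} by feeding it the Miller-measurability that the Brendle--L\"owe characterization supplies. First I would invoke Theorem \ref{charmm}: the hypothesis that $\omega^\omega\cap \Ell[a]$ is not dominating for any $a\in\omega^\omega$ is equivalent to the assertion that every $\mathbf{\Sigma}^1_2$ subset of $\omega^\omega$ is Miller-measurable. Thus the projective pointclass $\bG=\mathbf{\Sigma}^1_2$ satisfies the hypothesis of Theorem \ref{mmcbeqmp}, and so, if $X$ happens to be a $\mathbf{\Sigma}^1_2$ space that is $\CB$, that theorem immediately yields that $X$ has the $\MP$. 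This disposes of the first of the two cases with no further work.

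The only case requiring an extra observation is the $\mathbf{\Pi}^1_2$ one, and here I would exploit the fact that Miller-measurability is manifestly symmetric under complementation. Indeed, by its very definition a set $A\subseteq\omega^\omega$ is Miller-measurable precisely when, for every closed copy $N$ of $\omega^\omega$, some closed copy $N'\subseteq N$ lies inside $A$ or inside $\omega^\omega\setminus A$; interchanging the roles of $A$ and $\omega^\omega\setminus A$ shows that $A$ is Miller-measurable if and only if $\omega^\omega\setminus A$ is. Since the complement of any $\mathbf{\Pi}^1_2$ set is $\mathbf{\Sigma}^1_2$, and such complements are Miller-measurable by the previous paragraph, it follows that every $\mathbf{\Pi}^1_2$ subset of $\omega^\omega$ is itself Miller-measurable. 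Hence $\bG=\mathbf{\Pi}^1_2$ also satisfies the hypothesis of Theorem \ref{mmcbeqmp}, and a second application of that theorem shows that a $\mathbf{\Pi}^1_2$ space that is $\CB$ must have the $\MP$ as well.

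There is essentially no genuine obstacle here: the content of the corollary is entirely carried by Theorem \ref{charmm} and Theorem \ref{mmcbeqmp}. The one point worth stating explicitly, rather than leaving to the reader, is the complementation symmetry of Miller-measurability, since it is what lets the conclusion about $\mathbf{\Sigma}^1_2$ sets transfer to $\mathbf{\Pi}^1_2$ sets; everything else is a direct citation of the two theorems with $\bG$ instantiated as $\mathbf{\Sigma}^1_2$ and $\mathbf{\Pi}^1_2$ respectively.
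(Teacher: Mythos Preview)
Your proof is correct and follows essentially the same approach as the paper, which simply writes ``Simply apply Theorem \ref{mmcbeqmp}.'' You have merely made explicit what the paper leaves to the reader, namely that Theorem \ref{charmm} supplies Miller-measurability for $\mathbf{\Sigma}^1_2$ sets and that the complementation symmetry of Miller-measurability then carries this over to $\mathbf{\Pi}^1_2$ sets, after which Theorem \ref{mmcbeqmp} applies in both cases.
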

\begin{proof}
Simply apply Theorem \ref{mmcbeqmp}. 
\end{proof}

Notice that the following theorem generalizes the classical fact that every
uncountable Polish space has size $\cccc$. In its proof, we will identify
$2^\omega$ with the power set of $\omega$ through characteristic functions.
\begin{lemma}\label{cbGdelta} Let $X$ be $\CB$ space. Then every $\Gd$ subset of
$X$ is $\CB$.
\end{lemma}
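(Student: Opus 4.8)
The plan is to exploit the characterization of $\CB$ spaces in terms of forbidden closed copies of $\QQQ$. Recall (Corollary 1.9.13 in \cite{vanmill}, already invoked in the proof of Theorem \ref{mainzfc}) that a space fails to be $\CB$ precisely when it contains a closed copy of $\QQQ$. So, fixing a $\Gd$ subset $G$ of the $\CB$ space $X$, I would argue by contradiction: assume $G$ is not $\CB$, and obtain a subspace $Q\subseteq G$ with $Q\approx\QQQ$ that is closed in $G$.

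The key idea is to pass to the closure of $Q$ taken in the ambient space $X$. Let $K=\cl(Q)$, where $\cl$ denotes closure in $X$. Since $K$ is a closed subspace of the $\CB$ space $X$, it is in particular a non-empty Baire space. Because $Q$ is closed in $G$ we have $\cl_G(Q)=Q$, and hence $Q=\cl(Q)\cap G=K\cap G$; since $G$ is $\Gd$ in $X$, intersecting a defining sequence of open sets with $K$ shows that $Q$ is a $\Gd$ subset of $K$. Finally, $Q$ is dense in $K$ by the very definition of $K$.

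From here I would close the argument by a Baire-category count inside $K$. Since $Q$ is a dense $\Gd$ subset of $K$, the complement $K\setminus Q$ is an $\Fs$ set each of whose closed pieces has empty interior (any non-empty open subset would meet the dense set $Q$), so $K\setminus Q$ is meager in $K$. On the other hand $Q\approx\QQQ$ is crowded, so no point of $Q$ is isolated in $K$; being countable, $Q$ is then a countable union of nowhere dense singletons, hence also meager in $K$. Thus $K=Q\cup(K\setminus Q)$ would be meager in itself, contradicting the fact that $K$ is a non-empty Baire space. (Equivalently, one may invoke the standard fact that a dense $\Gd$ subset of a Baire space is itself Baire, which contradicts that $Q\approx\QQQ$ is not Baire.) I do not anticipate a serious obstacle: the only point requiring a moment's care is the verification that $Q=K\cap G$ is a \emph{dense} $\Gd$ subset of $K$, and this is routine once one observes that $Q$ being closed in $G$ forces $Q=\cl(Q)\cap G$.
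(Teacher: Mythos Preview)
Your argument is correct. Both your proof and the paper's share the same core observation---namely, that a closed subset of $G$ is a $\Gd$ subset of its closure in $X$, and that closure is $\CB$ (hence Baire)---but diverge in how they finish. The paper proceeds directly: for an arbitrary closed $C\subseteq G$ it notes that $C=\cl(C)\cap G$ is $\Gd$ in $\cl(C)$, and then quotes the result (Proposition~1.2 in \cite{debs}) that every $\Gd$ subset of a $\CB$ space is Baire, concluding that $C$ is Baire. You instead argue by contradiction via the closed-copy-of-$\QQQ$ characterization (Corollary~1.9.13 in \cite{vanmill}), and carry out the Baire-category count inside $K=\cl(Q)$ by hand. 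Your route is more self-contained, since it avoids the external citation to \cite{debs} and only uses a characterization already invoked earlier in the paper; the paper's route is shorter once that citation is granted, and yields the slightly stronger intermediate fact that \emph{every} closed subset of $G$ (not just a hypothetical copy of $\QQQ$) is Baire.
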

\begin{proof}
Throughout this proof, $\cl$ will denote closure in $X$. Let $G$ be a $\Gd$
subset of $X$. Let $C$ be a closed subset of $G$. Notice that $\cl(C)$ is $\CB$
because $X$ is $\CB$. Furthermore, tha fact that $C=\cl(C)\cap G$ shows that $C$
is a $\Gd$ subset of $\cl(C)$. Since every $\Gd$ subset of a $\CB$ space is
Baire (see Proposition 1.2 in \cite{debs}), it follows that $C$ is Baire.
\end{proof}

\begin{theorem}\label{chforcb} Let $X$ be an uncountable $\CB$ space. Then
$|X|=\cccc$.
\end{theorem}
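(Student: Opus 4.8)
The plan is to reduce the statement to a convenient normal form and then, under the assumption $|X|<\cccc$, to manufacture inside $X$ a closed copy of the rationals $\QQQ$; by Corollary 1.9.13 in \cite{vanmill} this contradicts the fact that $X$ is $\CB$. Since every separable metrizable space has size at most $\cccc$, it suffices to prove $|X|\geq\cccc$.

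First I would reduce to the case in which $X$ is crowded and dense in $2^\omega$. Let $P$ be the set of condensation points of $X$ (those $x$ every neighborhood of which is uncountable). As $X$ is second countable, $X\setminus P$ is countable, so $P$ is an uncountable closed, hence $\CB$, subspace of $X$; moreover every point of $P$ is a condensation point of $P$, so $P$ is crowded, and $|X|=|P|+\omega$. Thus it suffices to show $|P|=\cccc$. Next, embedding $P$ in $[0,1]^\omega$ and removing a suitable countable grid of spheres (exactly as in the proof of Theorem \ref{mmcbeqmp}) yields a zero-dimensional $\Gd$ subset $Y$ of $P$ containing a fixed countable dense subset of $P$; by Lemma \ref{cbGdelta}, $Y$ is $\CB$, and one checks that $Y$ is crowded and dense in $P$. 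Since $Y$ is zero-dimensional and crowded, its closure in $2^\omega$ is a nonempty compact crowded zero-dimensional space, hence homeomorphic to $2^\omega$. Replacing $P$ by $Y$, I may therefore assume that $P$ is a crowded $\CB$ space that is dense in $2^\omega$, and it remains to prove $|P|=\cccc$. Along the way I would record a useful observation: every nonempty open subspace $U$ of $P$ is uncountable, since $U$ is open in the Baire space $P$, hence Baire, whereas a countable crowded metrizable space is a copy of $\QQQ$ and is not Baire. In particular $P$ itself is uncountable.

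For the heart of the matter, suppose toward a contradiction that $|P|<\cccc$, and write $C=2^\omega\setminus P$. The goal is a countable crowded $S\subseteq P$ that is closed in $P$, that is, with $\cl(S)\cap P=S$ (closure taken in $2^\omega$); such an $S$ is a closed copy of $\QQQ$ in $P$, contradicting $\CB$. Equivalently, I seek a copy $K$ of $2^\omega$ inside $2^\omega$ for which $K\cap P$ is countable and dense in $K$: then $K\cap P$ is the desired $S$, since its complement $K\setminus S$, being the dense $\Gd$ part of the Cantor set $K$, automatically lies in $C$. I would construct $K$ by a fusion. Fixing a countable dense set of ``designated'' branches to carry the points of $S$, one steers the construction onto points of $P$ at these countably many branches, using that $P$ is dense.

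The difficulty, and the main obstacle, is to guarantee that the remaining continuum of branch limits all land in $C$, so that no undesired point of $P$ is added to $K$. This is exactly where the hypothesis $|P|<\cccc$ enters: because $C$ is co-small (of size $\cccc$, with complement of size $<\cccc$) while the set $P$ to be avoided has size $<\cccc$, there is enough room, at each of the countably many stages of the fusion, to refine the clopen pieces so as to dodge the fewer-than-$\cccc$ points of $P$ and drive every non-designated branch into $C$. I expect the careful bookkeeping that simultaneously keeps $S$ crowded, makes $K$ a genuine Cantor set, and forces $K\cap P=S$, to be the technical crux; the key conceptual point is that the ``boundary'' $\cl(S)\setminus S$ must accumulate back onto $S$, so it cannot be confined to any closed subset of $C$, which is harmless precisely because $C$ is dense. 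Once such $S$ is produced, $\CB$ fails, giving the contradiction and hence $|P|\geq\cccc$, so that $|P|=\cccc$ and finally $|X|=\cccc$.
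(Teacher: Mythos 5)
Your reduction to the normal form (passing to the condensation points, then to a zero-dimensional $\Gd$ via the sphere-removal trick and Lemma \ref{cbGdelta}, then to a crowded $\CB$ set $P$ dense in $2^\omega$) is correct and essentially identical to the paper's. The endgame, however, has a genuine gap at exactly the point you yourself flag as ``the technical crux''. You want a Cantor set $K$ with $K\cap P$ countable and dense in $K$, and you propose to obtain it by an $\omega$-stage fusion in which one ``refines the clopen pieces so as to dodge the fewer-than-$\cccc$ points of $P$''. This mechanism does not work. After your reduction, every nonempty open subset of $2^\omega$ meets $P$ in an \emph{uncountable} set, so at every finite stage the current approximation $V_n$ (a finite union of clopen sets covering $K$) satisfies $|V_n\cap P|\geq\omega_1$; the countability of $K\cap P=\bigcap_n(V_n\cap P)$ can therefore only emerge in the limit, and the finitely many choices made at each stage give no control over which of the uncountably many points of $V_n\cap P$ survive into $K$. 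Note also that the hypothesis $|P|<\cccc$ must enter in an essential, non-local way: if $P$ is a dense $\Gd$ with dense complement, then $C$ is dense and yet \emph{no} Cantor set $K$ has $K\cap P$ countable and dense in $K$ (such a $K\cap P$ would be a dense relative $\Gd$ of $K$, hence uncountable). Your sketch never actually uses $|P|<\cccc$ except rhetorically, and the closing remark that the boundary's behaviour is ``harmless precisely because $C$ is dense'' is not an argument.

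The missing idea is precisely what the paper supplies. After using countable dense homogeneity of $2^\omega$ to arrange $[\omega]^{<\omega}\subseteq X$, one has a family of $\cccc$ many \emph{prefabricated} candidate Cantor sets, namely $\PP(z)$ for $z$ ranging over an almost disjoint family $\Aa$ of size $\cccc$, each already carrying the countable dense subset $[z]^{<\omega}\subseteq X$. Almost disjointness makes the ``bad parts'' $[z]^\omega$ pairwise disjoint, so a pigeonhole argument finishes: either some $[z]^\omega$ misses $X$, in which case $\PP(z)\cap X=[z]^{<\omega}$ is a countable crowded closed subspace of $X$ (your closed copy of $\QQQ$, contradicting $\CB$), or every $z$ admits $f(z)\in[z]^\omega\cap X$ with $f(z)\subseteq z$, and then $f\upharpoonright\Aa$ is injective, giving $|X|\geq\cccc$ directly. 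In short, the correct move is a counting argument over a $\cccc$-sized family of candidates with pairwise disjoint uncountable parts, not a stage-by-stage construction of a single candidate; without something of this kind your argument does not close.
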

\begin{proof}
Using the classical Cantor-Bendixson derivative, we can assume that $X$ is
crowded. The same method that we used in the first paragraph of the proof of
Theorem \ref{mmcbeqmp}, together with Lemma \ref{cbGdelta}, shows that $X$ can
be assumed to be a subspace of $2^\omega$. Since $X$ is crowded, we can assume
that $X$ is dense in $2^\omega$. Since $2^\omega$ is countable dense homogeneous
(see Theorem 1.6.9 and Lemma 1.9.5 in \cite{vanmill}), we can also assume that
$[\omega]^{<\omega}\subseteq X$.

Assume, in order to get a contradiction, that there exists $z\in
[\omega]^\omega$ such that $[z]^\omega\cap X=\varnothing$.
It is easy to check that $[z]^{<\omega}$ is a countable crowded closed subspace
of $X$, which contradicts the fact that $X$ is $\CB$. Therefore, there exists a
function $f:[\omega]^\omega\longrightarrow[\omega]^\omega\cap X$ such that
$f(z)\subseteq z$ for every $z\in[\omega]^\omega$. Fix an almost disjoint family
$\Aa$ of size $\cccc$ (see Lemma III.1.16 in \cite{kunen}). It is easy to check
that $f\upharpoonright\Aa$ is injective. Therefore $X\supseteq f[\Aa]$ has size
$\cccc$.
\end{proof}

\begin{corollary}\label{notchsigmacbcbp}
Assume $\neg\CH$. Let $X$ be a $\CB$ space, and assume that $X$ is
$\mathbf{\Sigma}^1_2$. Then $X$ has the $\CBP$.
\end{corollary}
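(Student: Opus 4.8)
The plan is to take an arbitrary non-scattered closed subspace $C$ of $X$ and exhibit a copy of $2^\omega$ inside it. First I would use that $C$, being non-scattered, contains a crowded subspace $S$; since the closure of a dense-in-itself set is again dense-in-itself, the closure $\cl(S)$ of $S$ in $X$ is a crowded closed subspace of $X$ contained in $C$. Replacing $C$ by $\cl(S)$, I may assume that $C$ itself is crowded and closed in $X$, because a copy of $2^\omega$ in this smaller set is in particular a copy of $2^\omega$ in the original.

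Next I would rule out the possibility that $C$ is countable: if $C$ were countable, then being crowded it would be homeomorphic to $\QQQ$ by the classical theorem of Sierpi\'nski, and since $C$ is closed in $X$ while $\QQQ$ is not a Baire space, this would contradict the assumption that $X$ is $\CB$. Hence $C$ is uncountable. As a closed subspace of the $\CB$ space $X$ it is itself $\CB$, so Theorem \ref{chforcb} applies and gives $|C|=\cccc$; since we are assuming $\neg\CH$, this yields $|C|\geq\omega_2$. At the same time $C$ is a $\mathbf{\Sigma}^1_2$ space: embedding $X$ in a Polish space $Z$, the set $X$ is $\mathbf{\Sigma}^1_2$ in $Z$, and writing $C$ as the intersection of its closure in $Z$ (which is closed, hence $\mathbf{\Pi}^1_1$) with the $\mathbf{\Sigma}^1_2$ set $X$ shows that $C$ is $\mathbf{\Sigma}^1_2$ in $Z$. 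Now a $\mathbf{\Sigma}^1_2$ space of size at least $\omega_2$ contains a copy of $2^\omega$ (Proposition 13.7 in \cite{kanamori}, as quoted in the discussion preceding Question \ref{notchcbnotcbp}), so $C$ contains the desired copy of $2^\omega$. As $C$ was an arbitrary non-scattered closed subspace of $X$, this shows that $X$ has the $\CBP$.

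The main point of the argument is the interplay of the two size inputs, so the step I regard as essential (rather than merely routine) is the appeal to Theorem \ref{chforcb}: it is what upgrades ``uncountable'' to ``size $\cccc$'', and hence to ``size at least $\omega_2$'' under $\neg\CH$. Without it, $\neg\CH$ alone would leave open the intermediate case $\omega_1\leq|C|<\omega_2$, on which the $\mathbf{\Sigma}^1_2$ perfect-set dichotomy gives no information. The remaining ingredients---the reduction to a crowded closed $C$, the Sierpi\'nski characterization of countable crowded spaces, and the verification that closedness in $X$ transfers the $\mathbf{\Sigma}^1_2$ complexity to $C$---are straightforward bookkeeping that I expect to present briefly.
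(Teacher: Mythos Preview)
Your argument is correct and follows essentially the same route as the paper: reduce to a crowded closed $C$, use that $X$ is $\CB$ to see $C$ is uncountable, apply Theorem \ref{chforcb} together with $\neg\CH$ to get $|C|\geq\omega_2$, and finish with the $\mathbf{\Sigma}^1_2$ perfect-set result from \cite{kanamori}. The only cosmetic differences are that the paper invokes the Cantor--Bendixson derivative rather than closing a crowded subspace, and leaves the verification that $C$ is $\mathbf{\Sigma}^1_2$ implicit.
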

\begin{proof}
Assume, without loss of generality, that $X$ is uncountable.
Let $C$ be a non-scattered closed subset of $X$. Using the classical
Cantor-Bendixson derivative, we can assume that $C$ is crowded. Since $X$ is
$\CB$, it follows that $C$ is uncountable. Therefore $|C|=\cccc\geq\omega_2$ by
Theorem \ref{chforcb}. The well-known fact that every $\mathbf{\Sigma}^1_2$
space of size at least $\omega_2$ contains a copy of
$2^\omega$ (see Proposition 13.7 in \cite{kanamori}) concludes the proof.
\end{proof}

\end{document}